\newcommand{\lk}[2]{{\rm lk}_{#1}(#2)}
\newcommand{\st}[2]{{\rm st}_{#1}(#2)}
\newtheorem{Lemma}{Lemma}[section]
\newtheorem{Theorem}[Lemma]{Theorem}
\newtheorem{Proposition}[Lemma]{Proposition}
\newtheorem{Corollary}[Lemma]{Corollary}
\newtheorem{Remark}[Lemma]{Remark}
\newtheorem{definition}[Lemma]{Definition}
\def\vst{\vskip .1cm}
\def\t{\tau}
\def\a{\alpha}
\def\s{\sigma}
\def\p{\partial}
\def\H{{\mathcal{H}}}
\def\R{{\mathbb{R}}}
\def\D{\Delta}
\def\lk{lk_{\Delta}}
\def\st{st_{\Delta}}
\begin{document}
\title{A structure theorem for homology 4-manifolds with $g_2\leq 5$}
\author{Biplab Basak and Sourav Sarkar}
\date{}
\maketitle
\vspace{-10mm}
\noindent{\small Department of Mathematics, Indian Institute of Technology Delhi, New Delhi 110016, India.}

\footnotetext[1]{{\em E-mail addresses:} \url{biplab@iitd.ac.in} (B.
Basak), \url{Sourav.Sarkar@maths.iitd.ac.in} (S. Sarkar).}
\begin{center}
\date{April 18, 2024}
\end{center}
\hrule

\begin{abstract}
Numerous structural findings of homology manifolds have been derived in various ways in relation to $g_2$-values. The homology $4$-manifolds with $g_2\leq 5$ are characterized combinatorially in this article. It is well-known that all homology $4$-manifolds for $g_2\leq 2$ are polytopal spheres. We demonstrate that homology $4$-manifolds with $g_2\leq 5$ are triangulated spheres and are derived from triangulated 4-spheres with $g_2\leq 2$ by a series of connected sum, bistellar 1- and 2-moves, edge contraction, edge expansion, and edge flipping operations. We establish that the above inequality is optimally attainable, i.e., it cannot be extended to $g_2 = 6$.
\end{abstract}

\noindent {\small {\em MSC 2020\,:} Primary 05E45; Secondary 05C30, 57Q15, 57Q25.

\noindent {\em Keywords:} Homology 4-Manifold, $g_2$-value, Edge contraction, Edge expansion.}

\medskip
\section{Introduction}
The $g$-conjecture \cite{McMullen1971} proposed a detailed characterization of $f$-vectors of simplicial polytopes, as well as a search for potential $f$-vectors of other types of simplicial complexes. The components of the $g$-vector of any simplicial $d$-polytope are shown to be non-negative in \cite{Stanley}. The sufficiency for $f$-vectors of simplicial polytopes and simplicial convex polytopes proposed in the $g$-conjecture can be found in \cite{Billera1980, Billera1981}. This naturally raises the issue of the general complexes' geometric arrangement in relation to the $g$-values. It is trivial that $g_1(\D)=0$ holds if and only if $\D$ is a simplex.

Several classification results have been derived based on the third component $g_2$ of the $g$-vector. The well-known Lower Bound Theorem (LBT), established by Barnette \cite{Barnette1, Barnette2}, states that if $\D$ is the boundary complex of a simplicial $(d+1)$-polytope or, more broadly, a finite triangulation of a connected compact $d$-manifold without boundary, where $d\geq 2$, then $g_2(\D)\geq 0$. Every triangulation of a 3-manifold with $g_2\leq 9$ is a triangulated 3-sphere, and it is stacked when $g_2=0$ \cite{Walkup}. Moreover, the inequality $g_2\leq 9$ for triangulated 3-spheres is optimal, as there are triangulations of 3-dimensional handles (both orientable and non-orientable) with $g_2=10$. The inequality $g_2(\D)\geq 0$ has been established in \cite{BagchiDatta, Fogelsanger, Kalai} for the class of all normal pseudomanifolds of dimensions at least three. As a step toward equality on $g_2$, for $d\geq 3$, the stacked $d$-spheres have been shown to be the only normal $d$-pseudomanifolds with $g_2=0$ (cf. \cite{Kalai, Tay}). Some structural results on normal pseudomanifolds can be found in \cite{BagchiDatta98, BasakSwartz}.

If a homology $d$-manifold has no missing facets, then it is referred to as {\em prime}. Combinatorial characterizations of prime homology $d$-manifolds with $g_2\leq2$ are known due to Nevo and Novinsky \cite{NevoNovinsky} for $g_2=1$ and due to Zheng \cite{Zheng} for $g_2=2$.
\begin{Proposition}{\rm \cite{NevoNovinsky}}\label{Nevod>3}
Suppose $d\geq 3$, and let $\D$ be a prime homology $d$-sphere with $g_2(\D)=1$. Then $\D$ is combinatorially isomorphic to either the join of the boundary complex of two simplices, where each simplex has a dimension of at least $2$ and their dimensions add up to $d+1$, or the join of a cycle and the boundary complex of a $(d-1)$-simplex.
\end{Proposition}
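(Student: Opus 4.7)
The plan is to induct on the dimension $d$. I would first verify that both families are indeed prime homology $d$-spheres with $g_2 = 1$: using the join identity $f_1(A * B) = f_1(A) + f_1(B) + f_0(A) f_0(B)$ and the definition $g_2(\Delta) = f_1(\Delta) - (d+1) f_0(\Delta) + \binom{d+2}{2}$, a short computation confirms $g_2(\partial\s^k * \partial\s^{d+1-k}) = 1$ for $2 \le k \le d-1$ and $g_2(C_n * \partial\s^{d-1}) = 1$ for every $n \ge 4$. For the base case $d = 3$, Walkup's classification of triangulated $3$-spheres with small $g_2$ pinpoints the prime examples with $g_2 = 1$ as exactly $\partial\s^2 * \partial\s^2$ and the suspensions $C_n * \partial\s^1$.

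For the inductive step with $d \ge 4$, the key technical ingredient is Swartz's link inequality $g_2(\lk(v)) \le g_2(\Delta)$, valid for every vertex $v$ of a homology $d$-manifold. Hence every vertex link of $\Delta$ is a homology $(d-1)$-sphere with $g_2 \in \{0,1\}$. If some vertex $v$ has $g_2(\lk(v)) = 1$, then although $\lk(v)$ need not be prime, additivity of $g_2$ under connected sum (a short face-count calculation showing that the correction terms cancel) isolates a unique prime core that, by induction, is one of the two claimed forms. The star of $v$ is a cone over $\lk(v)$, and an analysis of missing faces together with the primeness of $\Delta$ forces the antistar of $v$ to extend this local picture into one of the two claimed global structures. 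If instead every vertex link is stacked ($g_2 = 0$), then I would invoke Kalai's generic $(d+1)$-rigidity of the $1$-skeleton: since the affine stress space has dimension $g_2(\Delta) = 1$, there is a unique nonzero stress up to scaling, and its support combined with local stackedness and primeness forces $\Delta$ into one of the two families.

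The step I anticipate to be hardest is the reconstruction in the first case: even with a handle on $\lk(v)$, one must show that no faces of $\Delta$ outside $\st(v)$ spoil the global picture. This requires carefully analyzing missing faces in joins (in $A*B$, every missing face is a missing face of $A$ or of $B$) and leveraging primeness of $\Delta$ to exclude parasitic connected-sum decompositions, which is delicate when $\lk(v)$ itself decomposes nontrivially as the connected sum of stacked spheres with a single prime core.
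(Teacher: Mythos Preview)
This proposition is not proved in the paper at all: it is quoted verbatim from Nevo and Novinsky \cite{NevoNovinsky} and used as a black box throughout. There is therefore no ``paper's own proof'' to compare against; any argument you supply would be a replacement for the cited reference, not for something in this paper.

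As for the content of your outline, it is a plausible-sounding sketch but not close to a proof. The base case is not as clean as you suggest: Walkup's work \cite{Walkup} does not give an off-the-shelf enumeration of prime $3$-spheres with $g_2=1$. In the inductive step, the real difficulty is precisely the part you flag as ``hardest'' and then leave unresolved. Knowing that some $\lk(v)$ has a prime summand of the desired form tells you almost nothing about the global structure of $\Delta$ without a mechanism that propagates this information to the antistar; ``an analysis of missing faces together with the primeness of $\Delta$'' is a placeholder, not an argument. Likewise, in the all-stacked-links case, invoking the one-dimensional stress space is the right instinct, but the claim that ``its support combined with local stackedness and primeness forces $\Delta$ into one of the two families'' is exactly the theorem you are trying to prove. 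The actual argument in \cite{NevoNovinsky} works rather differently: it exploits framework rigidity and a careful study of the unique (up to scaling) affine stress, identifying the missing faces of $\Delta$ directly from the stress support, rather than proceeding by induction on $d$ with a link-based reconstruction. If you want to reprove the result here, you would need to supply that stress-theoretic analysis (or something of comparable strength), not just gesture at it.
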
 
\begin{Proposition}{\rm\cite{Zheng}}\label{Zhengd>3}
Suppose $d\geq 4$, and let $\D$ be a prime homology $d$-manifold with $g_2(\D)=2$. Then $\D$ can be obtained by a central retriangulation of a polytopal $d$-sphere with $g_2=0$ or $1$, along some stacked subcomplex.
\end{Proposition}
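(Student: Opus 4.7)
The plan is to locate a vertex in $\Delta$ whose link is a stacked $(d-1)$-sphere, and then reverse a central retriangulation at that vertex to obtain a homology $d$-manifold $\Delta'$ with $g_2(\Delta') \in \{0,1\}$. First I would establish a Kalai-type local-global inequality of the shape
\[
\sum_{v} g_2\bigl(\operatorname{lk}_\Delta(v)\bigr) \le C\cdot g_2(\Delta)
\]
for a suitable constant $C$, using the rigidity inequalities of Kalai and Fogelsanger, which are available in dimension $d \ge 4$. Since $g_2(\Delta) = 2$ is very small, this forces most vertex links to have $g_2 = 0$, i.e.\ to be stacked $(d-1)$-spheres by the Kalai--Tay characterization of equality in the lower bound theorem.

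Next, fix such a vertex $v$ and let $S = \operatorname{lk}_\Delta(v)$, a stacked $(d-1)$-sphere. By Walkup's theorem, $S$ bounds a unique stacked $d$-ball $B$. Primeness of $\Delta$ together with $d \ge 4$ should allow $B$ to be realized inside $\Delta \setminus \operatorname{star}_\Delta(v)$ as a subcomplex meeting the rest of $\Delta$ only along $S$, since any obstruction to doing so would produce a missing facet in $\Delta$. I then define
\[
\Delta' := \bigl(\Delta \setminus \operatorname{star}_\Delta(v)\bigr) \cup B.
\]
By construction $\Delta$ is recovered from $\Delta'$ by centrally retriangulating along the stacked subcomplex $B$, namely by deleting the facets of $B$ and coning $\partial B = S$ to the new vertex $v$. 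A direct $f$-vector computation, using $g_2(B) = 0$ and the fact that $S$ is a stacked sphere, yields $g_2(\Delta') \le 1$. If $g_2(\Delta') = 0$, then $\Delta'$ is a stacked (hence polytopal) $d$-sphere; if $g_2(\Delta') = 1$, then Proposition~\ref{Nevod>3} and the known polytopality of the join constructions appearing there identify $\Delta'$ as a polytopal $d$-sphere, completing the argument.

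The hard part will be the very first step: the straightforward rigidity inequality does not, by itself, force the existence of a vertex with a stacked link when only $g_2(\Delta) = 2$ is assumed. A sharper analysis is needed, splitting into the sub-cases in which some $g_2\bigl(\operatorname{lk}_\Delta(v)\bigr)$ equals $1$ or $2$, and then peeling off a smaller stacked subcomplex by an inductive argument on the number of ``non-stacked'' vertices; alternatively one could try to identify a vertex of minimum degree with a controlled link. A secondary technical obstacle is showing that the stacked ball $B$ really embeds in $\Delta \setminus \operatorname{star}_\Delta(v)$ without creating identifications -- this is precisely where primeness is invoked, and where the hypothesis $d \ge 4$ becomes essential, since in dimension $3$ one must instead allow for genuine handle-like phenomena as in Walkup's theorem.
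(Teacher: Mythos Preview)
This proposition is quoted from Zheng's paper \cite{Zheng} and is not proved in the present article; it is stated here as a known background result with citation. Consequently there is no proof in this paper against which to compare your proposal.

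That said, a brief comment on your outline: the overall shape---find a vertex whose star can be replaced by a stacked ball, thereby lowering $g_2$---is indeed the mechanism behind central retriangulation, and you have correctly isolated the two genuine difficulties. First, the inequality $\sum_v g_2(\operatorname{lk} v)\le C\,g_2(\Delta)$ alone does not guarantee a vertex with $g_2(\operatorname{lk} v)=0$; one must rule out configurations where several links have $g_2=1$ or $2$. Zheng's argument handles this not by a crude counting bound but by a structural analysis of which links can occur and how they fit together, so your ``sharper analysis \dots splitting into sub-cases'' is on the right track but would need to be carried out in full. Second, your claim that primeness forces the stacked ball $B$ to embed in $\Delta\setminus\operatorname{star}_\Delta(v)$ is not automatic: primeness only excludes missing \emph{facets}, whereas the obstruction to embedding $B$ could be a missing interior face of lower dimension. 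Zheng controls this by working with a specific vertex (essentially one of minimum degree) and analyzing the possible missing faces directly. Your sketch is a plausible roadmap, but both gaps you flagged are substantive and are precisely where the work in \cite{Zheng} lies.
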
  

A homology manifold with $g_2\leq 2$ is the connected sum of a finite number of prime homology manifolds with $g_2$ less than or equal to 2. Therefore, a homology $d$-manifold with $g_2\leq 2$ is a triangulated $d$-sphere, as shown in the following result.
 \begin{Proposition}{\rm\cite{NovikSwartz2020}}\label{NovikSwartz2020}
 Let $d\geq 3$, and let $\D$ be a homology $d$-manifold that is not prime.  Then $\D$ is either a connected sum of prime homology manifolds or the result of a handle addition on a homology manifold.
 \end{Proposition}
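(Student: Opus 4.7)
The plan is to exploit the structural consequence of non-primality: there exists a \emph{missing facet}, i.e., a $d$-simplex $\sigma$ whose vertex set is contained in $\Delta$ and whose boundary $\p\sigma$ sits as a subcomplex of $\Delta$, but with $\sigma\notin\Delta$. Thus $\p\sigma\subset\Delta$ is isomorphic to the boundary of a $d$-simplex, hence a homology $(d-1)$-sphere embedded in $\Delta$. The argument then splits into two cases according to whether this embedded sphere separates the underlying space $|\Delta|$ or not.

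\textbf{Separating case.} Decompose $|\Delta|\setminus|\p\sigma|=U_1\sqcup U_2$ and let $\Delta_i$ be the subcomplex of $\Delta$ supported on $\overline{U_i}$, so that $\Delta_1\cap\Delta_2=\p\sigma$ and $\Delta_1\cup\Delta_2=\Delta$. Set $\widetilde{\Delta}_i:=\Delta_i\cup\{\sigma\}$ together with all its faces. A routine link computation shows each $\widetilde{\Delta}_i$ is again a homology $d$-manifold: for $v\in\Delta_i\setminus\p\sigma$ the link is unchanged, while for $v\in\p\sigma$ the link in $\widetilde{\Delta}_i$ gains the face $\sigma\setminus\{v\}$, which together with the pre-existing ``half-link'' of $v$ in $\Delta_i$ completes a homology $(d-1)$-sphere. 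By construction $\Delta=\widetilde{\Delta}_1\#\widetilde{\Delta}_2$, the connected sum taken along the facet $\sigma$.

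\textbf{Non-separating case.} Here we show $\Delta$ is a handle addition. The strategy is to cut $\Delta$ open along $\p\sigma$: for each $v\in\p\sigma$ introduce two copies $v^+,v^-$, and, using non-separation, redistribute the facets of $\Delta$ incident to $\p\sigma$ among the $v^\pm$ so that the two resulting $(d-1)$-spheres $\p\sigma^+,\p\sigma^-$ are disjoint and each bounds an empty $d$-simplex. Filling in these empty simplices with new $d$-faces $\sigma^+,\sigma^-$ yields a simplicial complex $\Delta'$. A further link check, together with a connectivity check using exactly the non-separation hypothesis, shows $\Delta'$ is a connected homology $d$-manifold, and $\Delta$ is then obtained from $\Delta'$ by handle addition across $\sigma^+$ and $\sigma^-$.

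The main obstacle is the non-separating case: making the cut-and-relabel procedure combinatorially precise requires a careful account of how vertex doubling interacts with facets of $\Delta$ that meet $\p\sigma$ in several vertices, since each such facet must be assigned consistently to the $+$ or $-$ side. One must then verify that every link in $\Delta'$ remains a homology $(d-1)$-sphere. The separating case, by contrast, is essentially bookkeeping once the decomposition $\Delta=\Delta_1\cup\Delta_2$ is in place.
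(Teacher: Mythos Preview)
The paper does not supply a proof of this proposition at all: it is quoted verbatim from Novik--Swartz \cite{NovikSwartz2020} and used as a black box (see the sentence ``The result in Proposition~\ref{NovikSwartz2020} implies that any homology $d$-manifold is obtained from prime homology $d$-manifolds \ldots''). So there is no in-paper argument to compare your proposal against.

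That said, your outline is the standard one and is essentially sound. Two remarks on points you flagged or left implicit. First, in the separating case the ``routine link computation'' for $v\in\p\sigma$ is not pure bookkeeping: you are using that the $(d-2)$-sphere $\p(\sigma\setminus\{v\})$ separates the homology $(d-1)$-sphere $\lk v$ into two pieces, each of which becomes a homology $(d-1)$-sphere when capped by $\sigma\setminus\{v\}$; this is a short Mayer--Vietoris/Alexander-duality argument and is where $d\ge 3$ first enters. Second, in the non-separating case the reason the $\pm$ relabelling can be made globally consistent is that $\p\sigma$ is a simplicial $(d-1)$-sphere with $d\ge 3$, hence simply connected, so the local two-sidedness (coming from the fact that every $(d-1)$-face of $\Delta$ lies in exactly two facets) propagates without monodromy. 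Once that is said, your cut-and-fill description goes through and the link/connectivity checks are as you describe.
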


In this article, our goal is to establish an upper bound, say $b$, on $g_2$ such that if $\D$ is a homology 4-manifold with $g_2(\D)\leq b$, then $\D$ is a triangulated 4-sphere. Furthermore, we will outline a combinatorial structure for such triangulated spheres. Our approach is rooted in the rigidity theory of graphs and fundamental combinatorial operations on simplicial complexes.

\begin{Theorem}\label{main}
If $\D$ is a homology $4$-manifold with $g_2(\D)\leq 5$, then $\D$ is a triangulated sphere obtained from a triangulated $4$-sphere with $g_2\leq 2$ by a sequence of the following operations:
\begin{enumerate}[$(i)$]
\item a connected sum,
\item a bistellar $1$-move and an edge contraction,
\item an edge expansion together with flipping an edge (possibly zero times) or a bistellar $2$-move (possibly zero times).
\end{enumerate}
Moreover, the above inequality is the best possible, i.e., it cannot be extended to $g_2 = 6$.
\end{Theorem}

\begin{Remark}\label{sharp}
{\rm In \cite{BrehmKuhnel}, Brehm and K\"{u}hnel proved that the graph of the 9-vertex combinatorial triangulation of complex projective space $\mathbb{CP}^2_9$ is complete. Consequently, $g_2(\mathbb{CP}^2_9)=6$. Therefore, the upper bound for $g_2(\D)$, for which the homology $4$-manifold $\D$ is a triangulated $4$-sphere, is the most favorable outcome, i.e., it cannot be expanded to accommodate $g_2 = 6$.


Note that $g_3(\mathbb{CP}^2_9)=10$, and $g_2(lk_{\mathbb{CP}^2_9} v)=6$ for every vertex $v$ in $\mathbb{CP}^2_9$. Moreover, the triangulation $\mathbb{CP}^2_9$ is a prime simplicial complex with all prime vertex-links. However, our constructions depend on the concept that for prime simplicial complexes $\D$ with all prime vertex-links, whenever $ g_2(\D)\leq 5$, we have $g_3(\D)=0$, and there is a vertex $u$ in $\D$ such that $g_2(\lk u)\leq 2$. Therefore, our constructions are not applicable for $\mathbb{CP}^2_9$.}
\end{Remark}

\section{Preliminaries}
All the simplices considered in this article are geometric, and the simplicial complexes are finite. We also assume that every simplicial complex contains $\emptyset$ as the only simplex of dimension $-1$. If $\sigma$ is an $n$-simplex in $\mathbb{R}^m$ for some $m$, which is the convex hull of $n+1$ affinely independent points $v_0,v_1,\dots, v_n$, then we write the simplex $\sigma$ as $v_0v_1\dots v_n$. If $\D$ is a simplicial complex, then the {\em geometric carrier} of $\D$ is the union of all simplices in $\D$, together with the subspace topology induced from $\mathbb{R}^m$ for some $m$. We say a simplicial complex $\Delta$ is {\em pure} if all its maximal simplices are of the same dimension. Every simplex present in a simplicial complex $\Delta$ is called a {\em face} of $\Delta$, and the maximal faces of $\Delta$ are called {\em facets}. The set of all vertices (or $0$-simplices) of $\Delta$ is denoted by $V(\Delta)$, the set of all edges (or $1$-simplices) of $\Delta$ is denoted by $E(\Delta)$, and the collection of all vertices and edges of $\Delta$ is called the {\em graph} of $\Delta$ and is denoted by $G(\Delta)$. For a subset $V'\subseteq V(\Delta)$, we define $\Delta[V']:=\{\sigma\in\Delta :V(\sigma)\subseteq V'\}$. A simplex $\sigma$ is called a {\em missing face} (or {\em missing simplex}) of $\Delta$ if $\partial(\sigma)\subseteq\Delta$ but $\sigma\notin\Delta$. A pure simplicial complex is called {\em prime} if it has no missing facets.

Two simplices $\sigma = u_0u_1\cdots u_k$ and $\tau = v_0v_1\cdots v_l$ in $\mathbb{R}^m$ for some $m\in \mathbb{N}$, are {\em independent} if $u_0,\dots ,u_k,v_0,\dots,v_l$ are affinely independent. In that case, $u_0\cdots u_kv_0\cdots v_l$ is a $(k + l + 1)$-simplex and is denoted by  $\sigma\star\tau$ or $\sigma\tau$. Two simplicial complexes $\D_1$ and $\D_2$ are said to be {\em independent} if $\s\t$ is a simplex of dimension $(i+j+1)$ for every pair of simplices $\s\in\D_1$ and $\t\in\D_2$ of dimensions $i$ and $j$, respectively. The join of two independent simplicial complexes $\D_1$ and $\D_2$ is the simplicial complex $\D_1\star\D_2:=\{\s\t : \s\in\D_1,\hspace{.15cm}\text{and}\hspace{.15cm}\t\in\D_2\}$. 
For a pair $(\sigma,\Delta)$, where $\sigma$ is a simplex and $\Delta$ is a simplicial complex, $\sigma\star\Delta$ denotes the simplicial complex $\{\alpha:\alpha\leq\sigma\}\star\Delta$. The link of a face $\sigma$ in $\Delta$ is the set $\{\gamma\in \Delta : \gamma\cap\sigma=\emptyset$ and $ \gamma\sigma\in \Delta\}$, denoted by $\lk \sigma$. Similarly, the star of a face $\sigma$ in $\Delta$ is the set $\{\alpha\in \Delta : \alpha\leq\sigma \beta$ and $\beta\in \lk \sigma\}$, denoted by $\st \sigma$. The number of vertices in $\lk u$ is called the {\em degree} of $u$, and is denoted by $d(u,\D)$ or simply $d(u)$. We use the notation $C(a_1,\dots,a_m)$ to denote a cycle of length $m$ (or an $m$-cycle) with vertices $a_1,\dots,a_m$. If the vertex set is not specified, we use the notation $C$ to denote a cycle.

One of the most common enumerative tools of a $d$-dimensional simplicial complex $\D$ is its $f$-vector $(f_{-1}(\D),f_0(\D),\dots,f_d(\D))$, where $f_i(\D)$ is the number of $i$-dimensional simplices present in $\D$. We also define the $h$-vector of $\D$ as $(h_0(\D),h_1(\D),\dots,h_d(\D))$, where
$$ h_i(\D)=\sum_{j=0}^{i}(-1)^{i-j} \binom{d+1-j}{i-j}f_{j-1}(\D),$$
 and we define  $g_i(\D):= h_i(\D)-h_{i-1}(\D)$. In particular, $g_2(\D)=f_1(\D)-(d+1)f_0(\D) + \binom{d+2}{2}$ and $g_3(\D)= f_2(\D)-df_1(\D)+ \binom{d+1}{2}f_0 - \binom{d+2}{3}$. In \cite{Swartz2004}, a relation between $g_i$'s of all pure simplicial complexes and their vertex-links is given, which was first stated by McMullen in \cite{McMullen} for shellable complexes.

 \begin{Lemma}{\rm \cite{Swartz2004}}\label{g-relations}
If $\D$ is a $d$-dimensional pure simplicial complex and $k$ is a positive integer, then
$$ \sum_{v\in\D} g_k(\lk v)=(k+1)g_{k+1}(\D) +(d+2-k)g_k(\D).$$
\end{Lemma}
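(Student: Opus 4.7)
The plan is to pass to the $h$-polynomial $h(\D,t):=\sum_{i\geq 0}h_i(\D)\,t^i$ and read off the stated relation from a polynomial identity. A routine double count first gives the auxiliary formula $\sum_{v\in V(\D)}f_{j-1}(\lk v)=(j+1)\,f_j(\D)$ for every $j\geq 0$: each $j$-face $\s$ of $\D$ has $j+1$ vertices, and for each $v\in\s$ the face $\s\setminus\{v\}$ is a $(j-1)$-face of $\lk v$, giving a bijection between vertex-face incidences and $(j-1)$-faces-across-links.

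Next I would use the binomial theorem in the definition of $h_i$ to obtain the standard closed form
$$h(\D,t)=\sum_{j\geq 0}f_{j-1}(\D)\,t^j(1-t)^{d+1-j}.$$
The same identity applies to each link $\lk v$ regarded as a $(d-1)$-dimensional complex (with $d$ replaced by $d-1$), so summing over $v\in V(\D)$ and substituting the counting identity gives
$$\sum_{v\in V(\D)}h(\lk v,t)=\sum_{\ell\geq 1}\ell\,f_{\ell-1}(\D)\,t^{\ell-1}(1-t)^{d+1-\ell}.$$
The heart of the argument is to recognize this right-hand side as $(d+1)\,h(\D,t)+(1-t)\,h'(\D,t)$: differentiating the $f$-expansion of $h(\D,t)$ yields two terms per $j$, and after multiplying by $(1-t)$ and adding $(d+1)\,h(\D,t)$ the coefficient of $f_{j-1}$ collapses via $t+(1-t)=1$ to exactly $j\,t^{j-1}(1-t)^{d+1-j}$.

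Extracting the coefficient of $t^k$ from the resulting polynomial identity $\sum_v h(\lk v,t)=(d+1)\,h(\D,t)+(1-t)\,h'(\D,t)$ produces
$$\sum_{v\in V(\D)}h_k(\lk v)=(k+1)\,h_{k+1}(\D)+(d+1-k)\,h_k(\D),$$
and subtracting this identity at index $k-1$, together with $g_i=h_i-h_{i-1}$, yields the claim $\sum_{v}g_k(\lk v)=(k+1)\,g_{k+1}(\D)+(d+2-k)\,g_k(\D)$. The main obstacle I anticipate is the bookkeeping in the generating-function reduction; once the intermediate identity relating the link sum to $(d+1)h+(1-t)h'$ is guessed (for instance, by matching the highest degree in $(1-t)$ on both sides), everything else is mechanical and does not require any geometric input beyond the double count of face incidences.
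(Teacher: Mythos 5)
Your proposal is correct. The paper does not prove this lemma at all---it is quoted from \cite{Swartz2004} (where it is attributed, for shellable complexes, to McMullen)---so the only comparison available is with the standard literature argument, and yours is essentially that argument: the double count $\sum_{v}f_{j-1}(\lk v)=(j+1)f_j(\D)$, the closed form $h(\D,t)=\sum_j f_{j-1}t^j(1-t)^{d+1-j}$, the identity $\sum_v h(\lk v,t)=(d+1)h(\D,t)+(1-t)h'(\D,t)$, and extraction of coefficients followed by telescoping to pass from $h$ to $g$. I checked the key step: $(d+1)h+(1-t)h'$ has $f_{j-1}$-coefficient $j\,t^{j-1}(1-t)^{d+2-j}+j\,t^{j}(1-t)^{d+1-j}=j\,t^{j-1}(1-t)^{d+1-j}$, matching the link sum, and the coefficient of $t^k$ then gives $\sum_v h_k(\lk v)=(k+1)h_{k+1}(\D)+(d+1-k)h_k(\D)$, whose difference at indices $k$ and $k-1$ is exactly the stated $g$-relation. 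The one hypothesis you use implicitly---that each $\lk v$ is a $(d-1)$-dimensional complex so the $h$-polynomial formula applies with $d$ replaced by $d-1$---is guaranteed by purity of $\D$ (every face of $\lk v$ extends to a facet of $\D$ containing $v$), so it is worth one sentence but is not a gap.
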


Let $\D$ be a pure $d$-dimensional simplicial complex, and let $\mathbb{F}$ be a given field. We say that $\D$ is a $d$-dimensional {\em $\mathbb{F}$-homology sphere} (or a {\em homology $d$-sphere}) if the $i$-th reduced homology groups $\tilde{H}_{i}(\lk \sigma; \mathbb{F})\cong \tilde{H}_{i}(\mathbb{S}^{d-j-1}; \mathbb{F})$ for every face $\sigma\in\D$ (including the empty face) of dimension $j$. Similarly, $\D$ is a $d$-dimensional {\em $\mathbb{F}$-homology manifold} (or a {\em homology $d$-manifold}) if the link of every vertex in $\D$ is a $(d-1)$-dimensional $\mathbb{F}$-homology sphere. A strongly connected and pure simplicial complex $\D$ of dimension $d$ is called a {\em normal $d$-pseudomanifold} if 
every $(d-1)$-face of $\D$ is contained in exactly two facets in $\D$ and the links of all the simplices of dimension $\leq (d-2)$ are connected. The following result is easy to verify.

\begin{Lemma} \label{> 1 for prime u}
Suppose $d \geq 4$, and let $\D$ be a prime normal $d$-pseudomanifold such that $g_2(\D) \geq 1$. If $u$ is a vertex in $\D$ such that $\lk u$ is prime, then $g_2(\lk u) \geq 1$.
\end{Lemma}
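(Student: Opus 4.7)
The plan is to argue by contradiction. Since $\D$ is a normal $d$-pseudomanifold with $d \geq 4$, the link $\lk u$ is itself a normal $(d-1)$-pseudomanifold and $d-1 \geq 3$, so the Lower Bound Theorem already gives $g_2(\lk u) \geq 0$. It therefore suffices to rule out the case $g_2(\lk u) = 0$.

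Suppose $g_2(\lk u) = 0$. By the characterization of normal pseudomanifolds with $g_2 = 0$ in dimension $\geq 3$ (Kalai, Tay, recalled in the introduction), $\lk u$ must be a stacked $(d-1)$-sphere. Every stacked $(d-1)$-sphere on more than $d+1$ vertices carries a missing facet, so primality of $\lk u$ forces it to be the boundary $\p\s$ of a single $d$-simplex $\s$ on the vertex set $V(\lk u)$.

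Next I would split on whether $\s \in \D$. If $\s \notin \D$, then $\s$ is a missing $d$-face of the pure $d$-complex $\D$, hence a missing facet, directly contradicting primality of $\D$. So assume $\s \in \D$. Then $\st u \cup \{\s\}$ coincides with $\p(u\s)$, the boundary of a $(d+1)$-simplex, which is a triangulated $d$-sphere with $g_2 = 0$; the remaining goal is to promote this subcomplex to all of $\D$, which will contradict $g_2(\D) \geq 1$.

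The main technical step is this last one. I would verify that every $(d-1)$-face of $\p(u\s)$ already lies in two facets of $\p(u\s)$: a face of the form $\s \setminus \{v\}$ lies in $\s$ and in $u\star(\s\setminus\{v\})$, while a face of the form $\{u\} \cup (V(\s)\setminus\{v,w\})$ lies in $u\star(\s\setminus\{v\})$ and in $u\star(\s\setminus\{w\})$. Since $\D$ is a pseudomanifold, each $(d-1)$-face sits in exactly two facets of $\D$, so no facet outside $\p(u\s)$ can share a $(d-1)$-face with $\p(u\s)$; strong connectedness of $\D$ then forces $\D = \p(u\s) = \p\D^{d+1}$, giving the desired contradiction. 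This saturation argument is the only genuine obstacle; the Lower Bound Theorem and the Kalai--Tay characterization carry the rest.
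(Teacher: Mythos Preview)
Your proof is correct. The paper itself does not give a proof of this lemma---it merely states that the result ``is easy to verify''---and your argument is exactly the natural verification: use the Kalai--Tay characterization to force $\lk u=\partial\sigma$, then use primality of $\D$ to get $\sigma\in\D$, and finally use the pseudomanifold property and strong connectedness to conclude $\D=\partial(u\sigma)$, contradicting $g_2(\D)\geq 1$.
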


Given two positive integers $a$ and $i$, there is a unique way to write
$$a={a_i\choose i}+{a_{i-1}\choose {i-1}}+\dots+{a_{j}\choose {j}},$$
where $a_i>a_{i-1}>\dots>a_j\geq j\geq 1$. Then $a^{<i>}$, the $i$-th {\em Macaulay pseudo-power} of $a$, is defined as 
$$a^{<i>}:= {a_i+1\choose i+1}+{a_{i-1}+1\choose {i}}+\dots+{a_{j}+1\choose {j+1}}.$$
\begin{Lemma}{\rm \cite{Swartz2014}}\label{g3 bound}
Let $\D$ be a normal $d$-pseudomanifold, where $d\geq 3$. Then $g_3(\D)\leq g_2(\D)^{<2>}$.
\end{Lemma}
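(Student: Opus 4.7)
The plan is to realize $g_2(\D)$ and $g_3(\D)$ as the second and third entries of the Hilbert function of a standard graded Artinian $k$-algebra and then invoke Macaulay's theorem on $O$-sequences, which asserts $h_{i+1}\leq h_i^{<i>}$ for any such Hilbert function. Applied in degree $2$ this is exactly the desired inequality $g_3\leq g_2^{<2>}$.

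I would set up as follows. Fix a field $k$ of characteristic zero and form the Stanley-Reisner face ring $k[\D]$. From a generic embedding of $V(\D)$ into $\R^{d+1}$ build a linear system of parameters $\Theta=\theta_1,\dots,\theta_{d+1}$, and pass to the Artinian reduction $A=k[\D]/(\Theta)$. Next choose a generic linear form $\omega\in A_1$ and set $B=A/\omega A$. The algebra $B$ is standard graded and generated in degree one, so Macaulay's inequality $\dim_k B_3\leq (\dim_k B_2)^{<2>}$ is automatic. The real work is to relate $\dim_k B_2$ and $\dim_k B_3$ to $g_2(\D)$ and $g_3(\D)$.

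The core of the proof then splits into two Hilbert function statements. \textbf{Step 1.} For a normal $d$-pseudomanifold with $d\geq 3$, $\dim_k B_2=g_2(\D)$. This is essentially the content of the Kalai-Fogelsanger rigidity theorem: multiplication by a generic $\omega$ carries $A_1$ onto $A_2$, and the local cohomology contributions arising from the Buchsbaum (but possibly not Cohen-Macaulay) structure vanish in this low degree, so $\dim_k B_2=h_2(\D)-h_1(\D)=g_2(\D)$. \textbf{Step 2.} For the same class of complexes one shows $\dim_k B_3\geq g_3(\D)$, a "weak Lefschetz in degree $2$" statement which amounts to bounding from above the kernel of $\omega\cdot\colon A_2\to A_3$; the failure of Cohen-Macaulayness shows up only in higher local cohomology and does not interfere at this level. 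Combining the two steps with Macaulay's inequality yields
$$g_3(\D)\leq \dim_k B_3\leq (\dim_k B_2)^{<2>}=g_2(\D)^{<2>}.$$

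I expect Step 2 to be the main obstacle. Step 1 is by now folklore: it boils down to the fact that the graph of a normal pseudomanifold of dimension at least three is generically rigid in $\R^{d+1}$, which is precisely what forces the surjectivity of $\omega\cdot $ on $A_1$. Step 2, by contrast, requires producing an honest subspace of $B_3$ of dimension at least $g_3(\D)$ and simultaneously verifying that no unwanted contributions from higher local cohomology of $k[\D]$ spoil the count. In Swartz's treatment this is handled by a delicate analysis of Poincare-duality-type pairings on the Artinian reduction of a Buchsbaum complex, or equivalently by showing that $A$ behaves sufficiently Cohen-Macaulay-like in the range of degrees relevant to the argument; this is the one place where the hypothesis $d\geq 3$ (rather than merely $d\geq 2$) is used in an essential way.
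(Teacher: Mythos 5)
The paper does not actually prove this lemma: it is imported as a black box from Swartz's survey \cite{Swartz2014}, so there is no internal argument to compare yours against. Judged on its own, your outline points at the expected algebraic framework (generic Artinian reduction $B=k[\D]/(\Theta,\omega)$, identification of $\dim_k B_2$ with the generic stress space via the Lee--Kalai correspondence and Fogelsanger's rigidity theorem \cite{Fogelsanger,Kalai}, then Macaulay's bound $\dim_k B_3\leq(\dim_k B_2)^{<2>}$), but it has a genuine gap exactly where the content of the theorem lies: Step 2, the inequality $g_3(\D)\leq\dim_k B_3$, is never argued. You yourself flag it as ``the main obstacle'' and defer it to ``Swartz's treatment,'' which amounts to assuming the cited result rather than proving it.

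Moreover, the structural hypothesis you lean on in both steps is false in the stated generality: a normal $d$-pseudomanifold need not be Buchsbaum. Schenzel's criterion requires every face link to have vanishing reduced homology below its dimension, whereas normality only forces links to be connected; for instance, the suspension of a triangulated torus is a normal $3$-pseudomanifold whose apex links have $\tilde{H}_1\neq 0$, so $k[\D]$ is not Buchsbaum and the local-cohomology control you invoke (``contributions vanish in this low degree,'' ``failure of Cohen--Macaulayness shows up only in higher local cohomology'') is unjustified precisely where it is needed. Even in the Buchsbaum case of homology manifolds (which is all this paper uses the lemma for), the degree-two weak-Lefschetz-type statement is a theorem requiring proof, not folklore. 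Step 1 can indeed be secured by Lee's identification of $B_2$ with the stress space together with Fogelsanger's theorem, and in fact $\dim_k B_2\leq g_2(\D)$ would suffice by monotonicity of $a\mapsto a^{<2>}$; but without an actual argument for Step 2 your proposal reduces the lemma to itself.
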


\begin{Lemma}\label{g3=0}
Let $\D$ be a homology $4$-manifold with $1\leq g_2(\D)\leq 5$. Then $g_3(\D)=0$.
\end{Lemma}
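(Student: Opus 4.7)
The plan is to sandwich $g_3(\D)$ between an upper bound coming from Macaulay (Lemma \ref{g3 bound}) and a lower bound coming from the link-sum identity (Lemma \ref{g-relations}), and then to close the sandwich at $0$ by using that $g_3$ of a closed homology $4$-manifold is divisible by $10$.

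For the upper bound, Lemma \ref{g3 bound} gives $g_3(\D) \leq g_2(\D)^{<2>}$. A direct computation of the Macaulay pseudo-powers from the representations $1=\binom{2}{2}$, $2=\binom{2}{2}+\binom{1}{1}$, $3=\binom{3}{2}$, $4=\binom{3}{2}+\binom{1}{1}$, $5=\binom{3}{2}+\binom{2}{1}$ yields $1^{<2>},\ldots,5^{<2>} = 1, 2, 4, 5, 7$, so $g_3(\D) \leq 7$. For the lower bound, Lemma \ref{g-relations} with $k=2$ and $d=4$ yields
$$\sum_{v \in V(\D)} g_2(\lk v) = 3\, g_3(\D) + 4\, g_2(\D).$$
Each $\lk v$ is a $3$-dimensional homology sphere, hence a normal $3$-pseudomanifold, so by the LBT we have $g_2(\lk v) \geq 0$; therefore $g_3(\D) \geq -\tfrac{4}{3}\, g_2(\D) \geq -\tfrac{20}{3}$, i.e.\ $g_3(\D) \geq -6$.

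For the divisibility step, I would invoke the generalized Dehn--Sommerville identity $g_3(\D) = h_3(\D) - h_2(\D) = 10\,(\chi(\D) - 2)$ for a closed homology $4$-manifold; elementarily, one obtains it by summing $\chi(\lk v) = 0$ over vertices and $\chi(\lk e) = 2$ over edges to get $2 f_1 - 3 f_2 + 4 f_3 - 5 f_4 = 0$ and $3 f_2 - 6 f_3 + 10 f_4 = 2 f_1$, solving for $f_3, f_4$ in terms of $f_1, f_2$, and substituting into $g_3(\D) = f_2 - 4 f_1 + 10 f_0 - 20$. Thus $g_3(\D) \in 10\,\mathbb{Z}$; combined with the sandwich, $g_3(\D)$ is an integer multiple of $10$ lying in $[-6, 7]$, which forces $g_3(\D) = 0$. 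The main technical point is this divisibility by $10$; the upper and lower bounds are immediate applications of lemmas already in the preliminaries.
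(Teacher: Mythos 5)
Your proof is correct and follows essentially the same route as the paper: the Macaulay bound $g_3\leq g_2^{<2>}\leq 7$, the identity $\sum_v g_2(\lk v)=3g_3+4g_2$ with nonnegativity of $g_2$ of the links, and the divisibility $g_3=10(\chi(\D)-2)$ are exactly the three ingredients used there, only combined in a slightly different order (you derive $g_3\geq -6$ first, the paper rules out $g_3\leq -10$ after). The only cosmetic difference is that you re-derive the Dehn--Sommerville divisibility by double counting Euler characteristics of vertex and edge links, whereas the paper simply cites Klee and Novik--Swartz; your computation of that identity is correct.
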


  \begin{proof}
  Recall that $g_3(\D)=h_3(\D)-h_2(\D)$. Since $\D$ is a homology 4-manifold, it follows from \cite{Klee, NovikSwartz2009} that $h_{5-i}(\D)-h_i(\D)=(-1)^{i}{5\choose i}(\chi(\D)-\chi(\mathbb{S}^4))$. In particular, for $i=2$, $h_3(\D)-h_2(\D)=10(\chi(\D)-\chi(\mathbb{S}^4))$, i.e., $g_3(\D)$ is an integer multiple of 10. Since $g_2(\D)\leq 5$, it follows from Lemma \ref{g3 bound} that $g_3(\D)\leq 7$. Therefore, by combining these two we have $g_3(\D)\leq 0$.
  
On the other hand, Lemma \ref{g-relations} implies that
\begin{eqnarray}\label{g2 sum equation}
\sum_{u\in\D} g_2(\lk  u)&=& 3g_3(\D)+4g_2(\D).
\end{eqnarray} 
If $g_3(\D)\leq -10$, then it follows from Equation \eqref{g2 sum equation} that $\sum_{u\in\D} g_2(\lk  u)\leq -30+20=-10$, which is not possible. Therefore, $g_3(\D)=0$.
\end{proof}

 As mentioned earlier, the combinatorial description of homology manifolds with $g_2\leq 2$ is known \cite{NevoNovinsky, Zheng}. In particular,  homology $3$-manifolds with $g_2\leq 2$ have the following combinatorial structures.
 
 \begin{Proposition}\label{Nevo g2=1}{\rm\cite{NevoNovinsky}}
  Let $\D$ be a prime homology $3$-manifold with $g_2(\D)=1$. Then $\D$ is combinatorially isomorphic to $C\star\p(\s^2)$, where $C$ is a cycle of length $n$ and $\s^2$ is a $2$-simplex. 
 \end{Proposition}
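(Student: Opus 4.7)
The plan is to deduce Proposition \ref{Nevo g2=1} directly from the general result of Proposition \ref{Nevod>3} applied with $d = 3$. Since Proposition \ref{Nevod>3} is stated for homology $d$-spheres while the hypothesis here is only that $\D$ is a prime homology $3$-manifold, the first step is to upgrade the hypothesis. By Walkup's theorem, recalled in the introduction, any triangulated $3$-manifold with $g_2 \leq 9$ is a triangulated $3$-sphere; applied to our $\D$ with $g_2 = 1$, this tells us $\D$ is a $3$-sphere and in particular a homology $3$-sphere, so Proposition \ref{Nevod>3} is available.

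Proposition \ref{Nevod>3} with $d = 3$ then presents $\D$ in one of two forms:
\begin{enumerate}[(A)]
\item $\D \cong \partial\sigma^i \star \partial\sigma^j$ with $i + j = 4$ and $i, j \geq 1$, or
\item $\D \cong C \star \partial\sigma^2$ for some $n$-cycle $C$ with $n \geq 3$.
\end{enumerate}
The goal is to show that every instance of (A) compatible with $g_2 = 1$ is already an instance of (B).

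In case (A) the unordered pair $\{i, j\}$ is either $\{1, 3\}$ or $\{2, 2\}$. For $\{1, 3\}$ the complex $\partial\sigma^1 \star \partial\sigma^3$ (the suspension of $\partial\sigma^3$) has $f_0 = 6$ and $f_1 = \binom{4}{2} + 2 \cdot 4 = 14$, yielding $g_2 = 14 - 4 \cdot 6 + 10 = 0$; this contradicts the hypothesis $g_2 = 1$ and is discarded. For $\{2, 2\}$ the complex $\partial\sigma^2 \star \partial\sigma^2$ coincides with $C_3 \star \partial\sigma^2$, since $\partial\sigma^2$ is combinatorially a $3$-cycle, and is therefore already an instance of (B) with $n = 3$. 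Combining the two cases, $\D \cong C \star \partial\sigma^2$ for some $n$-cycle $C$ with $n \geq 3$, as required.

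The only step of genuine substance is the appeal to Walkup's theorem to pass from a homology $3$-manifold to a $3$-sphere so that Proposition \ref{Nevod>3} can be invoked; the remainder is a routine $f$-vector calculation and a recognition of $\partial\sigma^2$ as a $3$-cycle. I do not foresee any serious obstacle, since the substantive combinatorial work is concentrated in the cited Nevo--Novinsky classification itself.
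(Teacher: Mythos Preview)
The paper does not supply its own proof of this proposition: it is stated as a cited result from Nevo--Novinsky (just as Proposition~\ref{Nevod>3} is). So there is no paper-side argument to compare against; your proposal is effectively a derivation of the $d=3$ statement from the general Proposition~\ref{Nevod>3}, which the paper also cites.

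Your derivation is correct. The reduction via Walkup's theorem is appropriate, and there is a small implicit step you are relying on: a homology $3$-manifold in the paper's sense is automatically a genuine (combinatorial) $3$-manifold, because vertex links are $2$-dimensional homology spheres and any closed surface with the homology of $S^2$ is $S^2$. With that, Walkup applies and you land in the hypotheses of Proposition~\ref{Nevod>3}. The case analysis is fine: the $\{1,3\}$ split has $g_2=0$ and is excluded, while the $\{2,2\}$ split is literally $C_3\star\partial\sigma^2$. Nothing further is needed.
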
  
 \begin{Proposition}\label{Zheng g2=2}{\rm\cite{Zheng}}
  Let $\D$ be a prime homology $3$-manifold with $g_2(\D)=2$. Then $\D$ is one of the following:
  \begin{enumerate}[$(i)$]
  \item $\D$ is obtained from a triangulated $3$-sphere with $g_2=1$ by stellar subdivision at a ridge,
  \item  $\D$ is an octahedral $3$-sphere, i.e., $\D=\p(a_1b_1)\star\p(a_2b_2)\star\p(a_3b_3)\star\p(a_4b_4)$.
  \end{enumerate} 
  \end{Proposition}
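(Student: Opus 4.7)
The plan is to analyze the structure of $\D$ via reverse stellar subdivision: when applicable this reduces $g_2$ by $|V(\lk e)|-3$ and yields case (i); when not applicable, a tight combinatorial analysis forces $\D$ to be the octahedral 3-sphere.

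I would first record that $\D$ is a triangulated 3-sphere (e.g.\ via Walkup's $g_2\le 9$ theorem) with $f_1=4f_0-8$, and call a vertex $w\in\D$ \emph{extractable} if $\lk w=\{u,v\}\star C$ is a suspension of a cycle $C$ over two vertices $u,v$ that are non-adjacent in $\D$. Such a $w$ is precisely the apex of a stellar subdivision at the ridge $uv$ in the complex $\D'$ obtained from $\D$ by deleting $\st w$ and inserting $uv\star C$ in its place. An $f$-vector calculation shows the change in $g_2$ is $|V(C)|-3$, so $g_2(\D')\in\{0,1\}$; one verifies $\D'$ remains a homology 3-manifold (automatic from stellar-subdivision combinatorics), and in the degenerate case $g_2(\D')=0$ applies an additional bistellar move or re-chooses the extractable vertex so as to present $\D$ instead as a stellar subdivision of a $g_2=1$ sphere. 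Combined with Proposition \ref{Nevo g2=1}, this yields case (i).

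If no vertex of $\D$ is extractable, I would force $\D=\p\s^1\star\p\s^1\star\p\s^1\star\p\s^1$. The identity $f_1=4f_0-8$ combined with $f_1\le\binom{f_0}{2}$ pins $f_0\in\{7,8\}$, so $\ol{G(\D)}$ has at most $4$ edges. Each vertex link $\lk v$ is a triangulation of $S^2$, and the irreducibility hypothesis says that every suspension decomposition $\lk v=\{u_1,u_2\}\star C'$ must satisfy $u_1u_2\in\D$. Enumerating the isomorphism types of small 2-spheres, combining this with the small size of $\ol{G(\D)}$, and invoking primality, I would show that the only admissible configuration is $G(\D)=K_{2,2,2,2}$ (the complement of a perfect matching on $8$ vertices), whose unique prime homology 3-manifold triangulation with $g_2=2$ is the octahedral sphere.

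The main obstacle is the irreducible case. Even with $f_0\in\{7,8\}$ and at most $4$ non-edges, ruling out vertex links that are not suspensions at all (such as iterated stellar subdivisions of $\p\s^3$) is delicate. The key ingredient will be a finer local analysis: for any non-edge $\{u,v\}$ with a common neighbor $w$, the pair $\{u,v\}$ appears as a non-edge of $\lk w$, constraining the 2-sphere $\lk w$ substantially; these local constraints, combined across all vertices and strengthened by primality, should force the fourfold join structure. Most of the technical effort will go into completing this case analysis to conclude that the only irreducible possibility is the octahedral sphere.
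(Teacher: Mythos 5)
This proposition is not proved in the paper at all: it is an external result quoted from \cite{Zheng} (whose proof goes through the rigidity/stress-space machinery, in all dimensions), so your sketch can only be measured against the statement itself, and as a proof of that statement it has concrete gaps. First, the reduction step does not do what you claim. If ${\rm lk}(w)=\{u,v\}\star C$ with $uv\notin\Delta$ and you replace ${\rm st}(w)$ by $uv\star C$, then $g_2$ drops by $|V(C)|-3$, so $g_2(\Delta')=5-|V(C)|$; your assertion that $g_2(\Delta')\in\{0,1\}$ silently excludes $|V(C)|=3$, where the move gives no reduction at all, and in the $|V(C)|=5$ case (where $\Delta'$ is stacked) the promised repair ``apply an additional bistellar move or re-choose the vertex'' is not an argument, since case $(i)$ requires exhibiting $\Delta$ as a stellar subdivision of a $g_2=1$ sphere. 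More seriously, the operation you invert is subdivision of an \emph{edge}, whereas a ridge of a triangulated $3$-sphere is a $2$-face: subdividing a ridge creates a vertex of degree $5$ whose link is $\{r,s\}\star\partial(xyz)$ with $xyz$ missing, i.e.\ exactly the $|V(C)|=3$ configuration your framework sets aside, and this is what must be inverted to reach $(i)$. The octahedral sphere makes the mismatch concrete: every one of its vertices is ``extractable'' in your sense (each link is a suspension of a $4$-cycle whose poles form a missing edge), so it falls on the reducible side of your dichotomy and your move presents it as an edge subdivision of a $7$-vertex $g_2=1$ sphere -- which is not case $(i)$ (it has no degree-$5$ vertex, which is precisely why it is listed separately) -- while your ``irreducible'' case, designed to capture it, does not contain it.

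Second, the numerical backbone of your irreducible case is reversed: $f_1=4f_0-8\le\binom{f_0}{2}$ yields $f_0\ge 7$, not $f_0\le 8$. Prime homology $3$-spheres with $g_2=2$ can have arbitrarily many vertices (subdivide a ridge of $C_n\star\partial(pqc)$ for large $n$), so any upper bound on $f_0$ in the irreducible case has to be extracted from the irreducibility hypothesis, and you give no such argument; consequently the ``at most $4$ non-edges'' count, the forcing of $G(\Delta)=K_{2,2,2,2}$, and the claimed uniqueness of its prime triangulation are all unsupported. Since that final case analysis is essentially the content of the theorem, and you explicitly defer it (``should force'', ``most of the technical effort''), what you have is an outline of a plausible elementary strategy for $d=3$, not a proof.
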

  \subsection{Rigidity of graphs and Pseudomanifolds}
  Let $G$ be a graph, and let $V$ be the set of vertices of $G$. A {\em $d$-embedding} of $G$ into $\mathbb{R}^d$ is an injective map $f: V\to \mathbb{R}^d$. A {\em framework} of $G$ is a pair $(G,f)$, where $f$ is a $d$-embedding of $G$ for some $d\geq 1$. In this context, we always use the Euclidean norm. 

\begin{definition}
{\rm
 A $d$-embedding $f$ of a graph $G=(V,E)$ is called {\em rigid} if there is an $\epsilon >0$ such that for every $d$-embedding $g$ of $G$ satisfying $(i)\hspace{.25cm} \lVert f(v)-g(v)\rVert<\epsilon$ for every vertex $v\in V$, and $(ii)\hspace{.25cm}\lVert f(v)-f(u)\rVert=\lVert g(v)-g(u)\rVert$ for every edge $vu\in E$, one has $\lVert f(v)-f(u)\rVert=\lVert g(v)-g(u)\rVert$ for every pair of vertices $v,u$ in $V$.}
 \end{definition}
 An embedding $f$ that is not rigid is called \textit{flexible}. If a $d$-embedding is rigid, then the corresponding framework is called a {\em rigid framework}. We denote the set of all $d$-embeddings of a graph $G$ by $\mathcal{E}(G)$, and the set of all rigid $d$-embeddings of a graph $G$ by $R\mathcal{E}(G)$. Note that $\mathcal{E}(G)$ is a topological vector space of dimension $d\cdot |V|$ and $R\mathcal{E}(G)$ is a subspace of $\mathcal{E}(G)$. 
\begin{definition}
{\rm
A graph $G$ is called {\em generically $d$-rigid} if the set of all rigid $d$-embeddings $R\mathcal{E}(G)$ is an open and dense subset of the set of all $d$-embeddings $\mathcal{E}(G)$.
}
\end{definition}
\begin{Proposition}{\rm\cite[Gluing Lemma]{AshimowRoth,NevoNovinsky}}\label{Gluing Lemma}
Let $G_1$ and $G_2$ be two generically $d$-rigid graphs such that $G_1\cap G_2$ contains at least $d$ vertices. Then $G_1\cup G_2$ is a generically $d$-rigid.
\end{Proposition}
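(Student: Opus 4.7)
The plan is to reduce the assertion to infinitesimal rigidity and then to a short linear-algebra argument on skew-symmetric matrices. Recall that a graph $G$ is generically $d$-rigid if and only if it is infinitesimally $d$-rigid at some (equivalently, every) generic embedding $f$, where infinitesimal rigidity of $(G,f)$ means that every assignment $m\colon V(G)\to\R^d$ satisfying $\langle m(u)-m(v),\,f(u)-f(v)\rangle=0$ for every edge $uv$ is \emph{trivial}, i.e., arises from an infinitesimal Euclidean isometry $v\mapsto Av+b$ with $A\in\R^{d\times d}$ skew-symmetric. I would therefore fix a generic embedding $f$ on $V(G_1\cup G_2)$; the restriction $f_i:=f|_{V(G_i)}$ is then a generic embedding of $G_i$, and by hypothesis each $(G_i,f_i)$ is infinitesimally $d$-rigid.

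Given any infinitesimal motion $m$ of $(G_1\cup G_2,f)$, its restriction $m_i$ to $V(G_i)$ is an infinitesimal motion of $(G_i,f_i)$ and hence trivial: $m_i(v)=A_iv+b_i$ with $A_i$ skew-symmetric. On the common vertex set $V(G_1)\cap V(G_2)$ the two expressions for $m$ must agree, which gives $(A_1-A_2)v=b_2-b_1$ for every shared vertex $v$.

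The key remaining step is to deduce $A_1=A_2$ and $b_1=b_2$ from the assumption $|V(G_1)\cap V(G_2)|\ge d$. Pick one shared vertex $v_0$ and subtract: then $(A_1-A_2)(v-v_0)=0$ for each of the at least $d-1$ remaining shared vertices $v$. For a generic $f$, the vectors $v-v_0$ are linearly independent, so the skew-symmetric matrix $A_1-A_2$ has nullity at least $d-1$ and hence rank at most $1$. Since every real skew-symmetric matrix has even rank, $A_1-A_2=0$, and then $b_1=b_2$ follows. Consequently $m_1$ and $m_2$ are restrictions of a single global infinitesimal isometry, so $m$ itself is trivial. This shows $(G_1\cup G_2,f)$ is infinitesimally $d$-rigid, whence $G_1\cup G_2$ is generically $d$-rigid.

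The only delicate point I expect is the bookkeeping of genericity: one must verify that a generic $f\in\mathcal{E}(G_1\cup G_2)$ simultaneously restricts to generic embeddings of each $G_i$ (so that the infinitesimal-rigidity hypothesis applies) and places the shared vertices in affinely general position (so that the vectors $v-v_0$ span a $(d-1)$-dimensional subspace). Each condition cuts out an open dense subset of $\mathcal{E}(G_1\cup G_2)$, so their intersection is still open and dense, and the argument above runs at any $f$ chosen from that intersection.
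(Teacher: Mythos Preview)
Your argument is correct and is essentially the standard infinitesimal-rigidity proof of the Gluing Lemma. Note, however, that the paper does \emph{not} supply its own proof of this proposition: it is stated with a citation to \cite{AshimowRoth,NevoNovinsky} and used as a black box. So there is no in-paper proof to compare against.

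For what it is worth, your route---reducing to infinitesimal rigidity at a single generic embedding, writing the two trivial motions as $v\mapsto A_i f(v)+b_i$, and then using the even-rank property of skew-symmetric matrices to force $A_1=A_2$ from only $d-1$ independent difference vectors---is a clean and self-contained treatment. The one cosmetic slip is that you write $A_iv+b_i$ where $A_i f(v)+b_i$ is meant (you are applying the affine map to positions in $\R^d$, not to abstract vertices), but this is harmless in context. Your handling of the genericity bookkeeping (that a generic $f$ on $V(G_1\cup G_2)$ restricts to generic embeddings of each $G_i$ and places the shared vertices in affinely general position, each condition being open dense) is exactly the right way to close the argument.
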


\begin{Lemma}\label{rigid embedding exists}
Let $G$ be a generically $d$-rigid graph, and let $H_1$ and $H_2$ be two generically $d$-rigid subgraphs of $G$. Then there is a rigid $d$-embedding $\Psi: V(G)\to \mathbb{R}^d$ whose restrictions on $V(H_1)$ and $V(H_2)$ are rigid $d$-embeddings of $H_1$ and $H_2$, respectively.
\end{Lemma}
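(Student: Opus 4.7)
The plan is to realize $\Psi$ as an element of a finite intersection of open dense subsets of $\mathcal{E}(G)$. Identifying $\mathcal{E}(G)$ with $\mathbb{R}^{d\,|V(G)|}$, consider, for $i=1,2$, the restriction map $\rho_i:\mathcal{E}(G)\to\mathcal{E}(H_i)$ given by $\rho_i(f)=f|_{V(H_i)}$. Since $V(H_i)\subseteq V(G)$, the map $\rho_i$ is just projection onto a coordinate subspace, hence a continuous, linear, surjective, and open map.

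Set $U_i:=\rho_i^{-1}(R\mathcal{E}(H_i))$ for $i=1,2$. By the hypothesis that each $H_i$ is generically $d$-rigid, $R\mathcal{E}(H_i)$ is open and dense in $\mathcal{E}(H_i)$. Continuity of $\rho_i$ immediately gives that $U_i$ is open in $\mathcal{E}(G)$. For density, let $W\subseteq\mathcal{E}(G)$ be any nonempty open set; then $\rho_i(W)$ is a nonempty open subset of $\mathcal{E}(H_i)$ (since $\rho_i$ is an open map), so by density of $R\mathcal{E}(H_i)$ there exists $g\in W$ with $\rho_i(g)\in R\mathcal{E}(H_i)$, i.e., $g\in W\cap U_i$. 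Hence $U_i$ is dense in $\mathcal{E}(G)$.

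Finally, $R\mathcal{E}(G)$ is itself open and dense in $\mathcal{E}(G)$ by generic $d$-rigidity of $G$. The set
\[
R\mathcal{E}(G)\cap U_1\cap U_2
\]
is therefore a finite intersection of open dense subsets of the complete metric (indeed, Euclidean) space $\mathcal{E}(G)$, and so is itself open and dense, in particular nonempty. Any $\Psi$ in this intersection is a rigid $d$-embedding of $G$ whose restriction to $V(H_i)$ lies in $R\mathcal{E}(H_i)$ for $i=1,2$, which is exactly what is required.

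There is no real obstacle here; the only point that merits care is the density transfer in the second paragraph, which rests on $\rho_i$ being an \emph{open} surjection (equivalently, projection onto a coordinate subspace), not merely continuous. Once this is noted, the conclusion is a straightforward application of the Baire-style fact that finitely many open dense subsets of $\mathbb{R}^N$ have open dense intersection.
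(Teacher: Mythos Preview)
Your argument is correct. Both proofs rest on the same underlying facts---that $R\mathcal{E}(G)$ and each $R\mathcal{E}(H_i)$ are open and dense---but the packaging differs. The paper proceeds constructively and iteratively: it starts from a rigid embedding $f$ of $G$, passes to an open box inside $R\mathcal{E}(G)$, perturbs the $H_1$-coordinates using density of $R\mathcal{E}(H_1)$ to obtain $\Phi$, and then repeats the perturbation for $H_2$ inside a smaller box. Your argument instead pulls back each $R\mathcal{E}(H_i)$ along the coordinate projection $\rho_i$ to an open dense subset $U_i\subseteq\mathcal{E}(G)$ and takes a single intersection $R\mathcal{E}(G)\cap U_1\cap U_2$. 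This is shorter, and it makes Corollary~\ref{Many rigid subgraph} (arbitrary finite collections $H_1,\dots,H_k$) immediate without any additional iteration. One minor remark: the finite intersection of open dense sets being dense is an elementary topological fact and does not require completeness or the Baire category theorem, so the ``Baire-style'' qualifier, while not wrong, is stronger than needed.
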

\begin{proof}
Let $V(G)=\{v_1,\dots,v_n\}$ be the set of vertices of $G$. Since $G$ is a generically $d$-rigid, the set of all rigid $d$-embeddings $R\mathcal{E}(G)$ is an open and dense subset of the set of all $d$-embeddings $\mathcal{E}(G)$. Let $f\in R\mathcal{E}(G)$. Note that  $\mathcal{E}(G)$ is isomorphic to the Euclidean space $\mathbb{R}^{d\cdot |V|}$. Since $R\mathcal{E}(G)$ is an open subset of $\mathcal{E}(G)$, there is an open set  $U=U_1\times\cdots\times U_{|V|}$, where each $U_i\subseteq\mathbb{R}^d$ (with $i$ corresponds to the vertex $v_i$), such that $f\in U\subseteq R\mathcal{E}(G)\subseteq \mathcal{E}(G)$. Let the number of vertices in $H_1$ and $H_2$ be $n_1$ and $n_2$, respectively. Without loss of generality, let $U_1,\dots,U_{n_1}$ correspond to the vertices of $H_1$. Let $f_{H_{1}}$ be the restriction of $f$ on $V(H_1)$. Then $f_{H_1}$ is a $d$-embedding of $H_1$, and further, $f_{H_1}\in W=U_1\times U_2\times\cdots\times U_{n_1}$. Since $H_1$ is a generically $d$-rigid, $R\mathcal{E}(H_1)$ is an open and dense subset of the set of all $d$-embeddings $\mathcal{E}(H)$. Since $W$ is an open subset of $\mathcal{E}(H)$, $R\mathcal{E}(H_1)\cap W \neq \emptyset$. Let $\phi\in R\mathcal{E}(H_1)\cap W$. Then $\phi$ is a rigid $d$-embedding of $H_1$. Define a map $\Phi : V(G)\to \mathbb{R}^d$ in the following way:

\begin{eqnarray*}
\Phi (v_i)&=& \phi(v_i)\hspace{.5cm}\text{if}\hspace{.2cm} 1\leq i\leq n_1, \\
  &=& f(v_i) \hspace{.5cm}\text{elsewhere}.
\end{eqnarray*}
Then $\Phi\in U$, and therefore, $\Phi$ is a generically $d$-rigid embedding of $G$ such that the restriction of $\Phi$ on $V(H_1)$ is a rigid $d$-embedding of $H_1$.

Let $Z\subseteq (R\mathcal{E}(H_1)\cap W)\times U_{n_1+1}\times\dots\times U_{|V|}$ be a basic open set in $\mathbb{R}^{d\cdot |V|}$ containing $\Phi$, where $Z=Z_1\times\dots\times Z_{|V|}$. Then every element $h$ in $Z$ is a rigid $d$-embedding of $G$, whose restriction on $V(H_1)$ is a rigid $d$-embedding of $H_1$. Let $Z_{i_1},\dots,Z_{i_{n_2}}$ correspond to the vertices of $H_2$, and take $W_1=Z_{i_1}\times\dots\times Z_{i_{n_2}}$. Then the restriction of $\Phi$ on $W_1$ is a $d$-embedding of $H_2$. Since $H_2$ is generically $d$-rigid and $W_1$ is an open set containing a $d$-embedding of $H_2$, we have  $R\mathcal{E}(H_2)\cap W_1 \neq \emptyset$. Let $\psi\in R\mathcal{E}(H_2)\cap W_1$, i.e., $\psi$ is a rigid $d$-embedding of $H_2$. Now, define a map $\Psi:\, V(G)\to\mathbb{R}^d$ in the following way:
\begin{eqnarray*}
\Psi (v_i)&=& \psi(v_i)\hspace{.5cm}\text{if}\hspace{.2cm}i_1\leq i\leq i_{n_2}, \\
  &=& \Phi(v_i) \hspace{.5cm}\text{elsewhere}.
\end{eqnarray*}
The map $\Psi$ is an element of $Z$, and the restriction of $\Psi$ on $V(H_2)$ is a rigid $d$-embedding of $H_2$. This completes the proof.
\end{proof}

\begin{Corollary}\label{Many rigid subgraph}
Let $G$ be a generically $d$-rigid graph, and let $H_1,\dots,H_k$ be generically $d$-rigid subgraphs of $G$. Then there is a rigid $d$-embedding $\Psi$ of $G$, such that the restriction of $\Psi$ on $V(H_i)$ is a rigid $d$-embedding of $H_i$, for $1\leq i\leq k$.
\end{Corollary}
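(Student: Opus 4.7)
The plan is to proceed by induction on $k$, using Lemma \ref{rigid embedding exists} as the base case $k=2$ (the case $k=1$ is immediate, since $R\mathcal{E}(H_1)$ is open and dense in $\mathcal{E}(H_1)$ and one can invoke the construction inside Lemma \ref{rigid embedding exists} with only $H_1$). For the inductive step, suppose the assertion holds for $k-1$, and let $\Phi$ be a rigid $d$-embedding of $G$ whose restriction to each $V(H_i)$, $1\le i\le k-1$, is a rigid $d$-embedding of $H_i$.

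The key observation I would exploit is that ``having a rigid restriction to $V(H_i)$'' is an open condition on $\mathcal{E}(G)$. Indeed, the restriction map $\mathcal{E}(G)\to\mathcal{E}(H_i)$ is (after identifying $\mathcal{E}(G)$ with $\mathbb{R}^{d\times|V(G)|}$) a coordinate projection, so it is continuous and open, and the preimage of the open set $R\mathcal{E}(H_i)$ is open in $\mathcal{E}(G)$. Intersecting these preimages for $1\le i\le k-1$ with $R\mathcal{E}(G)$ itself, I would extract a basic open neighborhood $Z = Z_1\times\cdots\times Z_{|V(G)|}$ of $\Phi$ in which every embedding is rigid on $G$ and restricts to a rigid embedding of $H_i$ for each $i\le k-1$, exactly mimicking the extraction of the open set $Z$ in the proof of Lemma \ref{rigid embedding exists}.

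Next, letting $W_k$ be the product of those factors $Z_j$ indexed by the vertices of $H_k$, $W_k$ is an open subset of $\mathcal{E}(H_k)$ containing $\Phi|_{V(H_k)}$. Since $H_k$ is generically $d$-rigid, $R\mathcal{E}(H_k)$ is dense in $\mathcal{E}(H_k)$, so I can pick $\psi\in R\mathcal{E}(H_k)\cap W_k$. Then define $\Psi:V(G)\to\mathbb{R}^d$ by $\Psi(v)=\psi(v)$ for $v\in V(H_k)$ and $\Psi(v)=\Phi(v)$ otherwise. Since $\Psi\in Z$, it is automatically a rigid embedding of $G$ with rigid restrictions to each $V(H_i)$ for $i<k$, while the restriction to $V(H_k)$ is $\psi$ and hence rigid. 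This closes the induction.

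The only subtlety is the bookkeeping that ensures the perturbation on $V(H_k)$ does not destroy rigidity on $G$ or on the previously rigidified subgraphs, and this is handled precisely by staying inside the product open set $Z$, exactly as in Lemma \ref{rigid embedding exists}. I expect no essentially new obstacle beyond that lemma — the corollary is a routine extension by iterating the open/dense argument $k-1$ times.
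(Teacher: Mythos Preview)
Your proposal is correct and follows exactly the approach the paper intends: the corollary is stated without proof immediately after Lemma~\ref{rigid embedding exists}, and the implicit argument is precisely the inductive iteration of that lemma's open/dense perturbation step that you carry out. Your observation that ``restricting rigidly to $V(H_i)$'' is an open condition on $\mathcal{E}(G)$ is the clean way to phrase the bookkeeping, and it matches the extraction of the basic open set $Z$ in the lemma's proof.
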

For a $d$-embedding $f$ of a graph $G$, a stress of $G$ corresponding to $f$ is a function $\omega : E(G)\to\mathbb{R}$ such that for every vertex $v\in V(G)$,
\begin{eqnarray*}
\sum_{uv\in E(G)} \omega(uv)[f(v)-f(u)]=0.
\end{eqnarray*}
We say that an edge $uv$ participates in a stress $\omega$ if $\omega(uv)\neq 0$, and that a vertex $v$ participates in $\omega$ if there exists a vertex $u$ such that the edge $uv$ participates in $\omega$. The collection of all stresses of $G$ corresponding to a given $d$-embedding $f$ forms a real vector space called the {\em stress space} of $G$ corresponding to $f$. Let us denote the stress space of $G$ corresponding to $f$ by $\mathcal{S}(G_f)$. This stress space can be described in terms of matrices as well. Let $|V|=n$ and $|E|=m$. Let $Rig(G,f)$ be the $m \times dn$ matrix whose rows are labeled by edges, and the columns are grouped into blocks of size $d$ labeled by vertices. In the row corresponding to the edge $e=uv$, the $d$-vector occupied in the block column corresponding to the vertex $u$ (resp. $v$) will be $f(u)-f(v)$ (resp. $f(v)-f(u)$). The rest of the columns in that row will be occupied by the zero vector. The matrix $Rig(G,f)$ is called the {\em rigidity matrix} of the graph $G$ corresponding to the $d$-embedding $f$. It is easy to see that the stress space of a graph $G$ corresponding to a $d$-embedding $f$ is the same as the left kernel of the rigidity matrix $Rig(G,f)$. If $G$ is a generically $d$-rigid graph, then the elements in $R\mathcal{E}(G)$ are referred to as generic maps. Moreover, in the case of a generically $d$-rigid graph $G$, the dimension of the stress space remains independent of the choice of generic maps \cite{Kalai}. In such instances, we denote the rigidity matrix of the graph $G$ as $Rig(G, d)$ and the corresponding stress space as $\mathcal{S}(G)$. By a generic $d$-stress (or generic stress when $d$ is understood) of a generically $d$-rigid graph $G$, we mean a stress of $G$ corresponding to a generic map. For a generically $d$-rigid graph $G$, define $\gamma(G):=m-dn+{d+1\choose 2}$.
%
%

\begin{Proposition}{\rm\cite{AshimowRoth, Kalai}}\label{g2 as dimension of stress space}
Let $G=(V,E)$ be a generically $d$-rigid graph. Then $\gamma(G)$ is the dimension of the left kernel of the rigidity matrix $Rig(G,d)$. 
\end{Proposition}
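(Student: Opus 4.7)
The plan is to apply rank-nullity to the rigidity matrix and identify its right kernel with the space of infinitesimal isometries of Euclidean $(d+1)$-space. The matrix in question has $f_1(G)$ rows (indexed by edges) and $(d+1)f_0(G)$ columns (grouped into blocks of size $d+1$ indexed by vertices), and its left kernel has already been identified with the stress space in the discussion preceding the proposition. Consequently
$$\dim(\text{left kernel}) \;=\; f_1(G)-\operatorname{rank}(Rig(G,d)),$$
and the whole statement reduces to computing that rank.

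To evaluate the rank I would pass to the right kernel. Unwinding the definition, a column vector $w\colon V(G)\to\mathbb{R}^{d+1}$ lies in the right kernel of the rigidity matrix of a $(d+1)$-embedding $f$ iff $\langle f(u)-f(v),\,w(u)-w(v)\rangle=0$ for every edge $uv$, which is exactly the condition that $w$ be an infinitesimal flex of the framework $(G,f)$. By the hypothesis that $G$ is generically $(d+1)$-rigid, we may pick $f$ to be a rigid embedding, and then the Asimow--Roth theorem forces every infinitesimal flex to be trivial, i.e., the restriction to $f(V(G))$ of an infinitesimal isometry of $\mathbb{R}^{d+1}$. The space of such infinitesimal isometries has dimension $(d+1)+\binom{d+1}{2}=\binom{d+2}{2}$, spanned by the $d+1$ translations and the $\binom{d+1}{2}$ skew-symmetric infinitesimal rotations, and these linear functionals restrict injectively to $f(V(G))$ once the image affinely spans $\mathbb{R}^{d+1}$, which holds at a generic rigid embedding provided $f_0(G)\geq d+2$ (the very low-vertex cases can be inspected directly).

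Rank-nullity on the columns then yields $\operatorname{rank}(Rig(G,d))=(d+1)f_0(G)-\binom{d+2}{2}$, whence
$$\dim(\text{left kernel of }Rig(G,d)) \;=\; f_1(G)-(d+1)f_0(G)+\binom{d+2}{2} \;=\; g_2(G),$$
as claimed. The main obstacle is the continuous-to-infinitesimal rigidity step, namely the non-trivial assertion that at a rigid embedding of a generically rigid graph every infinitesimal flex is already trivial; this is the content of the Asimow--Roth theorem cited with the proposition, and it also implicitly supplies the fact that the trivial flex space is cut out to have exactly the expected codimension. Once that equivalence is in hand, the rest of the argument is just the dimension count above, and there is nothing deeper involved.
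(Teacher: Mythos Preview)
Your argument is correct and is essentially the standard proof found in the cited sources. Note that the paper does not supply its own proof of this proposition; it is quoted from Asimow--Roth and Kalai, so there is nothing further to compare against. Your rank--nullity reduction, the identification of the right kernel with infinitesimal flexes, and the count $\dim(\text{trivial flexes})=\binom{d+2}{2}$ at an affinely spanning embedding together constitute exactly the classical argument.

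One small remark: you silently corrected a typographical slip in the statement. In the paper's notation $Rig(G,d)$ denotes the rigidity matrix for $d$-embeddings, whereas the hypothesis is generic $(d+1)$-rigidity and the formula $g_2=f_1-(d+1)f_0+\binom{d+2}{2}$ only matches the stress-space dimension for the $(d+1)$-dimensional rigidity matrix. You (correctly) worked with blocks of size $d+1$ throughout, i.e.\ with $Rig(G,d+1)$; it is worth flagging this explicitly so the reader is not confused by the mismatch in the displayed proposition.
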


\begin{Proposition}{\rm\cite[Cone Lemma]{TayWhiteWhiteley,Whiteley}}\label{Cone Lemma}
Let $G$ be a given graph, and let $C(G,u)$ be the cone over $G$ with a new vertex $u$. Then,
\begin{enumerate}[$(i)$]
 \item $G$ is generically $d$-rigid if and only if $C(G,u)$ is generically $(d+1)$-rigid.
 \item The left null spaces of $Rig(G,d)$ and $Rig(C(G,u),d+1)$ are isomorphic real vector spaces. Moreover, if $\gamma(G)\neq 0$, then $u$ participates in a generic stress of $C(G,u)$.
 \end{enumerate}
\end{Proposition}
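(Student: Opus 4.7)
My plan is to analyse an explicit $(d+1)$-dimensional embedding of $C(G,u)$ built from a given $d$-dimensional embedding $f$ of $G$, namely the \emph{coning embedding} $\tilde{f}$ defined by $\tilde{f}(u)=\mathbf{0}$ and $\tilde{f}(v)=(f(v),1)\in\mathbb{R}^{d+1}$ for every $v\in V(G)$, and to read off the correspondence between stresses of $(G,f)$ and those of $(C(G,u),\tilde{f})$ directly from the rigidity matrix. Written in block form, the row of $Rig(C(G,u),\tilde{f})$ for a cone edge $uv$ carries $(f(v),1)$ in the block for $v$ and $(-f(v),-1)$ in the block for $u$, while the row for an edge $vw$ of $G$ carries $(f(v)-f(w),0)$ in block $v$ and its negation in block $w$, with zeros elsewhere.

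Given a left-kernel element $\omega\in\mathcal{S}(C(G,u),\tilde{f})$, I would read the vertex equation at each $v\in V(G)$ coordinate-wise: its $(d+1)$-st coordinate alone forces $\omega(uv)=0$, and the remaining $d$ coordinates reproduce exactly the stress equations of $(G,f)$. Hence the restriction map $\omega\mapsto\omega|_{E(G)}$ is a linear isomorphism $\mathcal{S}(C(G,u),\tilde{f})\xrightarrow{\sim}\mathcal{S}(G,f)$. A dual count on the right null space (infinitesimal motions) shows its dimension grows by exactly $d+1$ when passing from $(G,f)$ to $(C(G,u),\tilde{f})$: the extra $d+1$ dimensions come from the unconstrained velocity $\dot{u}\in\mathbb{R}^{d+1}$ of the cone vertex together with the observation that, once the $\mathbb{R}^d$-components of the remaining vertex velocities have been chosen subject to the constraints of $(G,f)$, the cone-edge equations pin down all of their $(d+1)$-st coordinates. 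Since $\binom{d+2}{2}=\binom{d+1}{2}+(d+1)$, this identity matches the trivial-motion counts on both sides, and the ``if'' direction of (i) is immediate: if $f$ is generic for $G$ and $G$ is generically $d$-rigid, then the right-null-space dimension of $Rig(C(G,u),\tilde{f})$ equals $\binom{d+2}{2}$, so $\tilde{f}$ is a rigid $(d+1)$-embedding and $C(G,u)$ is generically $(d+1)$-rigid.

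The converse direction of (i), together with part (ii), both reduce to verifying that the coning embedding $\tilde{f}$ attains the generic rank of $Rig(C(G,u),d+1)$, and this is the main technical obstacle. I would address it by translating a generic $(d+1)$-embedding of $C(G,u)$ so that $u$ sits at the origin and then projecting radially onto the affine hyperplane $\{x_{d+1}=1\}$ to obtain a $d$-embedding of $G$, using projective invariance of (infinitesimal) rigidity to conclude that the coning and the generic embedding of $C(G,u)$ lie in the same stratum of the rigidity-matrix rank function. Combined with lower semicontinuity of rank, this transfers both the rigidity equivalence in (i) and the stress-space isomorphism in (ii) from $\tilde{f}$ to generic embeddings. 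Finally, the ``$u$ participates'' assertion is a statement about genuinely generic embeddings of $C(G,u)$ rather than $\tilde{f}$ itself, at which every stress vanishes on every cone edge; since the stress space has positive dimension $g_2(G)\geq 1$ and varies algebraically with the embedding inside $\mathbb{R}^{|E(C(G,u))|}$, the locus of embeddings whose stress space is entirely contained in the coordinate subspace $\{\omega:\omega(uv)=0\ \forall v\in V(G)\}$ is Zariski-closed and proper, so at a generic embedding of $C(G,u)$ some cone edge must carry non-zero stress.
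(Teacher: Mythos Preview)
The paper does not prove this proposition; it is quoted with a citation to \cite{TayWhiteWhiteley,Whiteley} and used as a black box, so there is no proof in the paper to compare against.

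Your outline follows the classical Whiteley argument and is essentially correct for (i) and the isomorphism in (ii). The explicit coning embedding $\tilde f$, the block computation showing $\mathcal{S}(C(G,u),\tilde f)\cong\mathcal{S}(G,f)$ via the forced vanishing $\omega(uv)=0$, the right-kernel count giving an increase of exactly $d+1$, and the appeal to projective invariance of infinitesimal rigidity to transfer these identities from $\tilde f$ to generic $(d+1)$-embeddings are all standard and sound.

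There is, however, a real gap in your treatment of the clause ``$u$ participates in a generic stress''. You assert that the locus of $(d+1)$-embeddings whose stress space lies inside $W=\{\omega:\omega(uv)=0\ \forall v\}$ is Zariski-closed \emph{and proper}, but you never justify properness --- and the coning embedding $\tilde f$ itself lies in that locus, so non-emptiness is not the issue; you have produced no embedding outside it. What is missing is the observation that, at a generic $(d+1)$-embedding $g$, one has $\mathcal{S}(C(G,u),g)\cap W$ equal to the stress space of $(G,g|_{V(G)})$ computed in $\mathbb{R}^{d+1}$, and this has dimension strictly less than $g_2(G)$ whenever $g_2(G)\ge 1$. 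Indeed, writing $g(v)=(f(v),h(v))$ with $f$ generic in $\mathbb{R}^d$ and $h$ generic in $\mathbb{R}$, any nonzero $\omega\in\mathcal{S}(G,f)$ has some $\omega(v_0w_0)\neq 0$, and the extra equilibrium equation $\sum_{w}\omega(v_0w)(h(v_0)-h(w))=0$ at $v_0$ carries the nonzero coefficient $-\omega(v_0w_0)$ on $h(w_0)$, hence fails for generic $h$. Thus the linear map $\omega\mapsto\bigl(\sum_w\omega(vw)(h(v)-h(w))\bigr)_v$ is nonzero on $\mathcal{S}(G,f)$, so $\dim\bigl(\mathcal{S}(C(G,u),g)\cap W\bigr)<g_2(G)=\dim\mathcal{S}(C(G,u),g)$, and some cone edge must carry nonzero stress. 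With that addition your argument is complete.
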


A simplicial complex $\D$ is called {\em generically $d$-rigid} if its graph (or 1-skeleton), denoted as $G(\D)$, is generically $d$-rigid. We will use the notations $\mathcal{S}(\D_f)$ and $\mathcal{S}(\D)$ to refer to $\mathcal{S}(G(\D)_f)$ and $\mathcal{S}(G(\D))$, respectively. Note that, if $\D$ is a $(d-1)$-dimensional simplicial complex such that $G(\D)$ is generically $d$-rigid, then $g_2(\D)=\gamma(G(\D))$. The notion of rigidity is applied in the structural analysis of simplicial complexes, particularly in various contexts involving normal pseudomanifolds.

\begin{Proposition}{\rm \cite{Fogelsanger,Kalai}}
Let $d\geq 3$, and let $\D$ be a normal $d$-pseudomanifold. Then,
\begin{enumerate}[$(i)$]
 \item $\D$ is generically $(d+1)$-rigid, and
 \item for every face $\s$ of co-dimension $3$ or more, $g_2(\lk\s)\leq g_2(\D)$.
 \end{enumerate}
\end{Proposition}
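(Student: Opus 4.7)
The plan is to prove (i) by induction on $d$, using the classical fact that normal $2$-pseudomanifolds are generically $3$-rigid (Whiteley/Fogelsanger) as the base case, and to prove (ii) by exhibiting an injection from the stress space of $\lk\s$ into the stress space of $\D$ via iterated coning.

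For (i), let $\D$ be a normal $d$-pseudomanifold with $d\geq 3$. For every vertex $v\in V(\D)$, the link $\lk v$ is a normal $(d-1)$-pseudomanifold, hence generically $d$-rigid by the inductive hypothesis (or by the Whiteley/Fogelsanger base case when $d=3$). Proposition \ref{Cone Lemma} then yields that $\st v=v\star\lk v$ is generically $(d+1)$-rigid. By strong connectivity of $\D$, order the vertices $v_1,\dots,v_n$ so that each $v_i$ with $i\geq 2$ is adjacent in $G(\D)$ to some earlier $v_j$. Build $G(\D)=G(\st v_1)\cup\cdots\cup G(\st v_n)$ one star at a time, applying Proposition \ref{Gluing Lemma} at each step: when joining $G(\st v_i)$, the edge $v_iv_j$ lies in some facet $F$ of $\D$, and $V(F)\subseteq V(\st v_i)\cap V(\st v_j)$ has exactly $d+1$ vertices, meeting the Gluing Lemma hypothesis. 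The resulting graph is generically $(d+1)$-rigid.

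For (ii), let $\s$ be a face of codimension at least $3$ and set $L:=\lk\s$, a normal pseudomanifold of dimension $d-|\s|\geq 2$. By (i), $L$ is generically $(d-|\s|+1)$-rigid, and by Proposition \ref{g2 as dimension of stress space}, $g_2(L)=\dim\mathcal{S}(L)$. Since $\st\s=\s\star L$ is obtained from $L$ by coning $|\s|$ times, iterating Proposition \ref{Cone Lemma} shows that $\st\s$ is generically $(d+1)$-rigid with $(d+1)$-stress space of dimension $g_2(L)$. Use Lemma \ref{rigid embedding exists} to pick a rigid $(d+1)$-embedding $f$ of $G(\D)$ whose restriction to $V(\st\s)$ is rigid for $G(\st\s)$. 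Each stress $\omega$ of $\st\s$ extends by zero to a function $\tilde\omega:E(\D)\to\mathbb{R}$, and the vertex equation for $\tilde\omega$ holds trivially at vertices outside $\st\s$ (no incident edge carries stress) and reduces to the stress equation for $\omega$ at vertices inside $\st\s$ (edges leaving $\st\s$ contribute $0$). Hence $\tilde\omega\in\mathcal{S}(\D)$, and the assignment $\omega\mapsto\tilde\omega$ is an injection $\mathcal{S}(\st\s)\hookrightarrow\mathcal{S}(\D)$. Therefore $g_2(\D)\geq g_2(\st\s)=g_2(L)$.

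The principal obstacle is the base case of (i): generic $3$-rigidity of normal $2$-pseudomanifolds (Fogelsanger's theorem) is a substantial result that I would cite rather than reprove. A minor but essential inductive detail is verifying that star-intersections contain at least $d+1$ vertices at each gluing step, which relies on the pseudomanifold property that every edge lies in a facet of full dimension.
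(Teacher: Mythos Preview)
The paper does not supply its own proof of this proposition; it is quoted directly from Fogelsanger and Kalai. Your argument is essentially Kalai's original proof: cone the link of each vertex to see that every star is generically $(d+1)$-rigid, then use connectedness to glue stars across shared facets via the Gluing Lemma for part (i); and for part (ii), iterate the Cone Lemma on $\st\sigma=\sigma\star\lk\sigma$ and inject its stress space into that of $\D$ by zero-extension. Both parts are carried out correctly, and your use of Lemma~\ref{rigid embedding exists} to fix a single embedding that is simultaneously generic for $\D$ and for $\st\sigma$ is exactly the right technical device (and is the reason that lemma appears in the paper).

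Two minor remarks. First, in part (ii) you write ``By (i), $L$ is generically $(d-|\sigma|+1)$-rigid,'' but when $\sigma$ has codimension exactly $3$ the link $L$ is $2$-dimensional and part (i) as stated requires $d\geq 3$; you should invoke the Fogelsanger base case directly there (you clearly know this, since you flag it as the principal obstacle). Second, in the gluing step you should note explicitly that the induced subgraph on $V(F)$ is a $(d+1)$-clique contained in both $G(\st v_i)$ and the partial union, so the intersection genuinely has $d+1$ common vertices as required by Proposition~\ref{Gluing Lemma} applied with parameter $d+1$. With these small clarifications your proof is complete and matches the cited literature.
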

\begin{Proposition}\label{complete graph} 
 Let $\D$ be a  normal $d$-pseudomanifold with $g_2(\D)=p$, and $g_2(\lk v)\geq 1$ for every vertex $v\in\D$. Let $u$ be a vertex such that $g_2(\D[V(\st u)])=p-1$. Then the graph $G(\D[V(\D\setminus\st u)])$ is complete.
\end{Proposition}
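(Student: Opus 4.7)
The plan is to exploit generic rigidity together with the one-dimensional gap $g_2(\D) - g_2(\D[V(\st u)]) = 1$ forced by the hypothesis.

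First I set up the rigidity framework. By Fogelsanger-Kalai, $G(\D)$ is generically $(d+1)$-rigid. Since $\lk u$ is a normal $(d-1)$-pseudomanifold, $G(\lk u)$ is generically $d$-rigid, so by the Cone Lemma, $G(\st u)$ is generically $(d+1)$-rigid, and therefore so is the supergraph $G(\D[V(\st u)])$. Proposition \ref{g2 as dimension of stress space} then yields $\dim \mathcal{S}(\D) = p$ and $\dim \mathcal{S}(\D[V(\st u)]) = p - 1$. Extending any stress of $G(\D[V(\st u)])$ by zero on the remaining edges yields a stress of $G(\D)$, and this embeds $\mathcal{S}(\D[V(\st u)])$ into $\mathcal{S}(\D)$ as precisely the subspace of stresses vanishing on every edge with at least one endpoint in $V_2 := V(\D) \setminus V(\st u)$. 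Thus the quotient has dimension exactly one.

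Next, for each vertex $v \in V_2$, the hypothesis $g_2(\lk v) \geq 1$ combined with the Cone Lemma yields a stress $\omega_v$ on $G(\st v)$ in which the apex $v$ participates, i.e., $\omega_v(vw) \neq 0$ for some $w \in V(\lk v)$. Extending by zero to $G(\D)$, the resulting stress (still denoted $\omega_v$) is nonzero on the edge $vw$, whose endpoint $v$ lies in $V_2$; hence $\omega_v$ projects to a nonzero element in the one-dimensional quotient.

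Finally, I argue by contradiction: suppose $x, y \in V_2$ with $xy \notin E(\D)$. Since both $[\omega_x]$ and $[\omega_y]$ are nonzero in the one-dimensional quotient, there exists $\lambda \neq 0$ with $\omega_x - \lambda \omega_y$ in the embedded subspace. Evaluating on any edge $xw \in E(\D)$, which has the endpoint $x \in V_2$ and hence lies outside $E(\D[V(\st u)])$, gives $\omega_x(xw) = \lambda \omega_y(xw)$. Every edge of $\st y$ either contains $y$ or is an edge of $\lk y$. If $xw$ contained $y$, then $w = y$ (as $x \neq y$), making $xw = xy$ and contradicting $xy \notin E(\D)$. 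If $xw \in E(\lk y)$, then $\{x, y, w\} \in \D$, again forcing $xy \in E(\D)$. Thus $xw \notin E(\st y)$, so $\omega_y(xw) = 0$, and consequently $\omega_x(xw) = 0$ for every neighbor $w$ of $x$ in $\D$. This contradicts the participation of $x$ in $\omega_x$, completing the proof.

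The main technical obstacle is the support bookkeeping in the last step: in particular, distinguishing the $1$-skeleton $G(\st v)$ of the star (to which the Cone Lemma directly applies) from the possibly larger induced subgraph $G(\D[V(\st v)])$, and verifying that the zero-extension truly produces a stress on $G(\D)$ and not merely on some intermediate subcomplex. Once these points are settled, the one-dimensional stress gap is enough to force the claimed completeness.
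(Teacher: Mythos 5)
Your proof is correct and takes essentially the same route as the paper's: generic $(d+1)$-rigidity, the dimension count $\dim\mathcal{S}(\D)=p$ versus $\dim\mathcal{S}(\D[V(\st u)])=p-1$, Cone-Lemma stresses at the two nonadjacent vertices outside $\st u$, and zero-extension of stresses, ending in the same contradiction. The only differences are cosmetic: you package the linear algebra as a one-dimensional quotient instead of exhibiting $p+1$ independent stresses, and the paper handles the technicality you flag by first fixing, via Corollary \ref{Many rigid subgraph}, a single embedding of $\D$ whose restrictions to $\D[V(\st u)]$ and to the two stars are simultaneously rigid, so that all the stress-space statements hold with respect to one embedding.
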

\begin{proof}
If possible, let $v$ and $z$ be two vertices in $V(\D\setminus\st u)$ such that $vz\notin\D$. Note that $\D[V(\st u)]$, $\st v$, and $\st z$ are generically $(d+1)$-rigid. It follows from Corollary \ref{Many rigid subgraph} that there is a rigid $(d+1)$-embedding of $G(\D)$ such that the restrictions of the map on $V(\st u), V(\st v)$ and $V(\st z)$ are rigid $(d+1)$-embeddings of $G(\D[V(\st u)])$, $G(\st v)$ and $G(\st z)$, respectively. Let $f$ be such a rigid $(d+1)$-embedding.

 Since $g_2(\D[V(\st u)])=p-1$, the dimension of the stress space  $\mathcal{S}(\D[V(\st u)]_f)$ is $p-1$. Let $\{w_1,\dots,w_{p-1}\}$ be a basis for $\mathcal{S}(\D[V(\st u)]_f)$, where $w_i:E(\D[V(\st u)])\to\mathbb{R}$ are stresses. Let us extend these stresses $w_1,\dots,w_{p-1}$ to $\tilde{w}_1,\dots,\tilde{w}_{p-1}$, respectively, on $E(\D)$ by taking $\tilde{w}_i(e)=0$ for all $e\in E(\D)\setminus E(\D[V(\st u)])$. Note that $\tilde{w}_1,\dots,\tilde{w}_{p-1}$ are linearly independent in $\mathcal{S}(\D_f)$.

  Since $g_2(\lk v)\geq 1$, by Cone Lemma, the vertex $v$ participates in a nonzero stress in $\mathcal{S}(\st v_f)$. Let $w_p:E(\st v)\to\R$ be such a nonzero stress. Now extend this stress $w_p$ to a stress $\tilde{w}_p:E(\D)\to\R$ by taking $\tilde{w}_p(e)=0$ for every edge $e\in E(\D)\setminus E(\st v)$. Then $\tilde{w}_1,\dots,\tilde{w}_{p-1}$ and $\tilde{w}_p$ are independent in $\mathcal{S}(\D_f)$ and hence a basis of $\mathcal{S}(\D_f)$. 

 By the same argument as in the previous paragraph, there is a nonzero stress $w_{p+1}:E(\st z)\to\R$ such that $z$ participates in $w_{p+1}$. Since $v\notin\st z$, the vertex $v$ does not participates in the stress $w_{p+1}$. Let us extend this stress $w_{p+1}$ to $\tilde{w}_{p+1}:E(\D)\to\R$ by taking $\tilde{w}_{p+1}(e)=0$ for every edge $e\in E(\D)\setminus E(\st z)$. Then $\tilde{w}_{p+1}$ cannot be written as a linear combination of $\tilde{w}_1,\dots,\tilde{w}_{p}$ as $\tilde{w}_i(e)=0$ for every edge $e$ incident to $z$. Therefore,  $\tilde{w}_1,\dots,\tilde{w}_p$ and $\tilde{w}_{p+1}$ are independent in $\mathcal{S}(\D_f)$, which contradicts the fact that $g_2(\D)=p$. Thus,  $vz\in\D$ for any two vertices $v$ and $z$ in $\D\setminus\st u$, i.e., $G(\D[V(\D\setminus\st u)])$ is complete. 
\end{proof}
\begin{Proposition}\label{same vertex set}
Let $\D$ be a  normal $d$-pseudomanifold such that $g_2(\lk v)\geq 1$ for every vertex $v\in\D$. If $u$ is a vertex in $\D$ with $g_2(\D)=g_2(\D[V(\st u)])$, then $V(\D)=V(\st u)$.
\end{Proposition}
\begin{proof}
If possible, let $v$ be a vertex in $V(\D\setminus\st u)$. Note that $\D[V(\st u)]$ and $\st v$ are generically $(d+1)$-rigid. By Corollary \ref{Many rigid subgraph}, there is a rigid $(d+1)$-embedding, say $f$, of $G(\D)$ such that the restrictions of the map on $V(\st u)$ and $ V(\st v)$ are rigid $(d+1)$-embeddings of $G(\D[V(\st u)])$ and $G(\st v)$, respectively.

 Let $g_2(\D)=p$. Then the stress  space $\mathcal{S}(\D[V(\st u)]_f)$ has dimension $p$. By a similar argument as in Proposition \ref{complete graph},  there exist linearly independent stresses $\tilde{w}_1,\dots,\tilde{w}_{p}$ in $\mathcal{S}(\D_f)$ such that $\tilde{w}_i(e)=0$ for all $e\in E(\D)\setminus E(\D[V(\st u)])$. Moreover, $g_2(\lk v)\geq 1$ implies that there is a nonzero stress $\tilde{w}_{p+1}$ in $\mathcal{S}(\D_f)$ such that the vertex $v$ participates in $\tilde{w}_{p+1}$, and $\tilde{w}_p(e)=0$ for every edge $e\in E(\D)\setminus E(\st v)$.  Therefore,  $\tilde{w}_1,\dots,\tilde{w}_p$ and $\tilde{w}_{p+1}$ are independent in $\mathcal{S}(\D_f)$, which contradicts the fact that $g_2(\D)=p$. This completes the proof. 
%
%
\end{proof}
  
\subsection{Combinatorial operations}
\begin{definition}
{\rm Let $\D$ be a normal $4$-pseudomanifold, and let $ab$ be an edge in $\D$ such that $\lk ab=\p(cd)\star\p(xyz)$, where $cd$ is a missing edge in $\D$. Consider the simplicial complex $\D':=(\D\setminus ab\star\p(cd)\star\p(xyz)) \cup \p(ab)\star cd\star\p(xyz)$. We say that $\Delta'$ is obtained from $\D$ by {\em flipping} the edge $ab$ with $cd$. }
\end{definition}

\begin{definition}
{\rm Let $\D$ be a normal $4$-pseudomanifold, and let $\s$ be a $(4-i)$-simplex in $\D$, where $1\leq i\leq 3$. If $\lk \s =\p(\t)$, where $\t$ is an $i$-simplex and $\t\notin\D$, then consider the simplicial complex $\D':=(\D\setminus \s\star\p(\t))\cup (\t\star\p(\s))$. We say that the complex $\D'$ is obtained from $\D$ by a \textit{bistellar $i$-move} with respect to the pair $(\s,\t)$.}
\end{definition}

\begin{definition}\label{contraction-expansion}
%
{\rm Let $\D$ be a normal $4$-pseudomanifold, and let $w$ be a vertex in $\D$. Consider a $2$-dimensional sphere $S$ in $\lk w$ that separates $\lk w$ into two portions, $D_1$ and $D_2$, where $D_1$ is a triangulated 3-ball, and $D_1$ and $D_2$ have common boundary complex $S$. Now, let $\D':=(\D\setminus \st w)\cup (uv\star S)\cup (u\star D_1)\cup (v\star D_2)$, where $u$ and $v$ are new vertices and $uv$ is a new edge. Then $lk_{\D'} u \cap lk_{\D'} v= lk_{\D'} uv$, and we say that $\D$ is obtained from $\D'$ by {\em contracting} the edge $uv$ to the vertex $w$. The normal pseudomanifold $\D'$ is said to be obtained from $\D$ by an {\em edge expansion}
}
\end{definition}

In particular, if there is a vertex, say $x$, in $\lk w$ such that $\lk xw$ is the $(d-2)$-dimensional sphere $S$, then we say $\D'$ is obtained by a \textit{central retriangulation} of $\D$ along $\st xw$ with center $u$ (or by stellar subdivision at the edge $xw$ with the vertex $u$, see \cite{Swartz2009} for more details). Thus, the central retriangulation of a normal pseudomanifold along the star of an edge (or the stellar subdivision at an edge) is a special case of an edge expansion.

\begin{definition}
{\rm Let $u$ and $v$ be two vertices in a normal 4-pseudomanifold $K$, such that either $lk_{K} u$ or $lk_{K} v$ is a triangulated sphere. We say that the pair $(u, v)$ satisfies the \textit{link condition} if $uv$ is an edge in $K$, and $lk_{K} u\cap lk_{K} v=lk_{K} uv$ holds.}
\end{definition}

Consider a pair $(u, v)$ of vertices $u$ and $v$ in a normal 4-pseudomanifold $K$ that satisfies the link condition. We can contract the edge $uv$ in this case. If $K'$ is the resulting complex, then the geometric carriers of $K'$ and $K$ are PL-homeomorphic (cf. \cite{BGS1}).


%

\begin{Lemma}\label{missing pqc}
Let $\D$ be a homology $4$-manifold,  and let $u$ be a vertex in $\D$ such that $\lk u=C\star\p(\s^2)$, where $\s^2$ is a $2$-simplex,  and $C$ is a cycle of length $n$. If $\s^2\notin\D$, then $\D$ is obtained from a homology $4$-manifold $\D'$ with $g_2(\D') = g_2(\D)-(n-2)$ by an edge expansion.
\end{Lemma}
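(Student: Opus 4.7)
The plan is to realize $\D$ as an edge expansion of the complex $\D_0$ obtained by contracting the edge $uc$ in $\D$, and then to read off the drop in $g_2$.

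First I would locate a separating $2$-sphere inside $\lk u = C \star \p(pqc)$. Set $S := C \star \{p,q\}$; as the suspension of the $n$-cycle $C$, it is a triangulated $2$-sphere, and it lies in $\lk u$. It splits $\lk u$ as $D_1 \cup_S (c \star S)$, where $D_1 := C \star pq$ is the join of a $1$-sphere with an edge, hence a triangulated $3$-ball. Thus $\lk u$ already has the local structure appearing in Definition 2.18, with $c$ playing the role of the second expansion vertex and $uc$ playing the role of the new expansion edge.

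Next I would verify the link condition for $(u,c)$. Since $c$ is a vertex of $\p(pqc)$, computing links inside the join yields
\begin{equation*}
\lk uc \;=\; lk_{C \star \p(pqc)}(c) \;=\; C \star lk_{\p(pqc)}(c) \;=\; C \star \{p,q\} \;=\; S.
\end{equation*}
For the nontrivial inclusion $\lk u \cap \lk c \subseteq \lk uc$, take $\gamma \in \lk u \cap \lk c$; then $\gamma$ is a simplex of $C \star \p(pqc)$ not containing $c$, so $\gamma = \alpha\beta$ with $\alpha \in C$ and $\beta \in \{\emptyset,\{p\},\{q\},pq\}$, i.e.\ $\gamma \in D_1$. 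If $\beta = pq$ then $\gamma \cup \{c\} \supseteq pqc$, which cannot be a face of $\D$ because $pqc$ is missing, contradicting $\gamma \in \lk c$. Hence $\beta \in \{\emptyset,\{p\},\{q\}\}$ and $\gamma \in C \star \{p,q\} = S$. The reverse inclusion is automatic. This step is the crux of the argument, and it is the only place where the hypothesis $pqc \notin \D$ enters.

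Because $\lk u$ is a triangulated $3$-sphere and the link condition holds, contracting $uc$ produces a normal $4$-pseudomanifold $\D_0$ that is PL-homeomorphic to $\D$, hence a homology $4$-manifold. Reading this in reverse, $\D$ is obtained from $\D_0$ by an edge expansion in the sense of Definition 2.18, using $S$ and $D_1$ inside the link of the contracted vertex (the complementary $3$-ball appears automatically by the PL Schoenflies theorem in the $3$-sphere $lk_{\D_0}(w)$).

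Finally, I would track the change in the $f$-vector. Contracting $uc$ drops $f_0$ by $1$ and removes the edge $uc$ itself together with one duplicate edge for each common neighbour of $u$ and $c$; by the link condition these common neighbours are exactly the $n+2$ vertices of $S$. Hence $f_1$ drops by $n+3$, and using $g_2 = f_1 - 5 f_0 + 15$ for a $4$-dimensional complex,
\begin{equation*}
g_2(\D_0) \;=\; g_2(\D) - (n+3) + 5 \;=\; m - (n-2),
\end{equation*}
which gives the required bound.
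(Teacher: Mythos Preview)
Your proof is correct and follows essentially the same approach as the paper: verify the link condition for an edge from $u$ to one of the vertices of the triangle $pqc$ using the hypothesis $pqc\notin\D$, contract that edge, and compute the resulting drop in $g_2$. The only cosmetic difference is that the paper contracts $up$ rather than $uc$; since $p,q,c$ play symmetric roles in $\p(pqc)$, this is the same argument, and your write-up simply spells out the link-condition check and the edge-expansion structure in more detail than the paper's terse version.
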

\begin{proof}
Let $\s^2=pqc$. Then, $\lk u\cap\lk p=\lk up=C\star\p(qc)$. Therefore, the pair $(u,p)$ satisfies the link condition, and we can contract the edge $up$. Let $\D'$ be the resulting complex. Then $f_0(\D')=f_0(\D)-1$, and $f_1(\D')=f_1(\D)-(n+3)$. Thus,  $g_2(\D')= f_1(\D)-(n+3)-5(f_0(\D)-1)+15=g_2(\D)-(n-2)$, and $\D$ is obtained from $\D'$ by an edge expansion.
\end{proof}
\begin{Lemma}\label{missing edge}
Let $\D$ be a homology $4$-manifold, and let $u$ be a vertex in $\D$ such that $\lk u=C\star\p(\s^2)$, where $\s^2$ is a $2$-simplex, and $C$ is a cycle of length $n \geq 4$. If $C$ contains a vertex that is incident to no diagonal edges in $\D[V(C)]$, then $\D$ is obtained from a homology $4$-manifold $\D'$ with $g_2(\D') = g_2(\D)-1$ by an edge expansion.
\end{Lemma}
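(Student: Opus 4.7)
The plan is to split into two cases depending on whether $pqc\in\D$. If $pqc\notin\D$, then Lemma \ref{missing pqc} applies directly and yields a homology $4$-manifold with $g_2\leq m-(n-2)$ from which $\D$ is obtained by an edge expansion; since $n\geq 4$, this is at most $m-2\leq m-1$, completing that case.

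For the remaining case $pqc\in\D$, let $x\in V(C)$ be a vertex incident to no diagonal of $C$, and let $x_1,x_2$ be its two $C$-neighbors. Because $n\geq 4$, $x_1x_2$ is not an edge of $C$. The key step is to verify the link condition $\lk u\cap \lk x=\lk ux$ for the edge $ux$, so that $ux$ can be contracted. Since $\lk u=C\star\p(pqc)$ and $x$ has $C$-neighbors exactly $\{x_1,x_2\}$, one gets $\lk ux=\p(x_1x_2)\star\p(pqc)$. At the vertex level, the diagonal hypothesis rules out edges from $x$ to any $C$-vertex other than $x_1,x_2$, while $p,q,c$ are adjacent to $x$ via the join structure of $\lk u$; hence $V(\lk u)\cap V(\lk x)=\{x_1,x_2,p,q,c\}=V(\lk ux)$. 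At the simplex level, any $\s\in\lk u$ supported on $\{x_1,x_2,p,q,c\}$ decomposes as $\alpha\cup\beta$ with $\alpha\subseteq\{x_1,x_2\}$ a face of $C$ and $\beta\in\p(pqc)$; since $x_1x_2\notin C$, necessarily $\alpha\in\p(x_1x_2)$, so $\s\in\p(x_1x_2)\star\p(pqc)=\lk ux$.

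With the link condition in hand, contract $ux$ to obtain a homology $4$-manifold $\D'$ PL-homeomorphic to $\D$. A short count gives $f_0(\D')=f_0(\D)-1$ and $f_1(\D')=f_1(\D)-6$ (losing $ux$ plus the five merged pairs through the common neighbors $\{x_1,x_2,p,q,c\}$), whence $g_2(\D')=g_2(\D)-1=m-1$, and $\D$ is obtained from $\D'$ by the edge expansion reversing this contraction. The main subtlety is the simplex-level step, and it is exactly where the hypothesis $n\geq 4$ is essential: it forces $x_1x_2\notin C$ and thus prevents a rogue simplex with $C$-part $\{x_1,x_2\}$ from lying in $\lk u\cap\lk x$ outside $\lk ux$.
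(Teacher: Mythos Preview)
Your proof is correct and follows essentially the same approach as the paper: verify the link condition $\lk u\cap\lk x=\lk ux=\p(x_1x_2)\star\p(pqc)$ for a diagonal-free vertex $x\in C$, then contract $ux$ to drop $g_2$ by one. The case split on whether $pqc\in\D$ is unnecessary, since your second-case verification of the link condition nowhere uses $pqc\in\D$ (indeed $pqc\notin\lk u$, so it cannot appear in $\lk u\cap\lk x$ regardless); the paper simply runs that argument uniformly without splitting.
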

\begin{proof}
Let $x\in C$ be a vertex that is free from all the diagonal edges of $\D[V(C)]$. Then $\lk u\cap\lk x=\lk ux=\p(\s^2)\star\p(x_1x_2)$, where $x_1$ and $x_2$ are two vertices of $C$ adjacent to $x$ in $C$. Let $\D'$ be the complex obtained by contracting the edge $ux$. Then $g_2(\D')=g_2(\D)-1$, and $\D$ is obtained from $\D'$ by an edge expansion. 
\end{proof}

%
%

 
 \begin{Lemma}\label{non prime vertex links}
Let  $\D$ be a prime homology $4$-manifold, and let $u$ be a vertex in $\D$ such that $\lk u$ is not prime. Then $\D$ is obtained from a homology $4$-manifold $\D'$ with $g_2(\D')=g_2(\D)-1$ by applying a bistellar 1-move and an edge contraction.
\end{Lemma}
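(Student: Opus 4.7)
The plan is to produce $\Delta'$ by splitting the vertex $u$ along a carefully chosen $(d-1)$-sphere inside $\lk u$, reversing the two operations listed in the conclusion. Since $\lk u$ is not prime, fix a missing facet $\tau$ of $\lk u$: a $(d-1)$-simplex with $\partial\tau\subseteq\lk u$ but $\tau\notin\lk u$. First I would observe that $\tau\notin\Delta$, for otherwise $\partial(u\tau)=\tau\cup(u\star\partial\tau)\subseteq\Delta$, and primality of $\Delta$ would force $u\tau\in\Delta$, contradicting $\tau\notin\lk u$. Since $\lk u$ is a homology $(d-1)$-sphere and $\partial\tau$ is an embedded $(d-2)$-sphere in it, $\partial\tau$ separates $\lk u$ into two $(d-1)$-balls $B_1$ and $B_2$ with common boundary $\partial\tau$.

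Next, introduce a new vertex $v$ and set
$$\Delta' := (\Delta\setminus\st u)\cup(u\star B_1)\cup(v\star B_2)\cup(u\star\tau)\cup(v\star\tau).$$
I would argue that $\Delta'$ is equivalently obtained from $\Delta$ in two steps: first an edge expansion at $u$ along the sphere $S=\partial\tau$ producing an intermediate homology $d$-manifold $\tilde\Delta$ in which the new edge $uv$ has link $\partial\tau$; then a bistellar $(d-1)$-move on the pair $(uv,\tau)$, which is legal because $\tau\notin\tilde\Delta$. Since both moves preserve the homology-manifold property, $\Delta'$ is a homology $d$-manifold: its link at $u$ equals $B_1\cup\tau$ and its link at $v$ equals $B_2\cup\tau$, each being the gluing of two $(d-1)$-balls along their common boundary $\partial\tau$, hence a $(d-1)$-sphere, while the links of all other vertices agree with those in $\Delta$. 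A direct $f$-vector count gives $f_0(\Delta')=f_0(\Delta)+1$ and $f_1(\Delta')=f_1(\Delta)+d$ -- the extra $d$ edges coming because each of the $d$ vertices of $\partial\tau$ is now joined to both $u$ and $v$ rather than to $u$ alone -- so $g_2(\Delta')-g_2(\Delta)=d-(d+1)=-1$.

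Finally, to recover $\Delta$ from $\Delta'$, I would check that $\tau$ has link $\{u,v\}=\partial(uv)$ in $\Delta'$ and that $uv\notin\Delta'$, so the bistellar $1$-move with respect to $(\tau,uv)$ is legal and produces the complex $\tilde\Delta$ above; in $\tilde\Delta$ the edge $uv$ has link $\partial\tau$, whose $d$ vertices make the link condition automatic, and the contraction of $uv$ returns exactly $\Delta$. The main technical point will be verifying that the vertex links of $\Delta'$ at the points of $\partial\tau$ are still homology spheres and that no edges are miscounted near the overlap $V(B_1)\cap V(B_2)=V(\partial\tau)$; routing the construction through the edge-expansion-then-bistellar-move realization should take care of both issues at once.
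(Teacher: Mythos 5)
Your construction is essentially the paper's own proof: there too one fixes the missing facet $\sigma$ of $\lk u$ (noting $\sigma\notin\D$ by primality of $\D$), writes $\lk u=S_1\#_{\sigma}S_2$, replaces the faces containing $u$ by the two cones $x_1\star(S_1\cup\{\sigma\})\cup x_2\star(S_2\cup\{\sigma\})$ --- which is your $(u\star B_1)\cup(u\star\tau)\cup(v\star B_2)\cup(v\star\tau)$ up to renaming --- and recovers $\D$ by the bistellar $1$-move on $(\sigma,x_1x_2)$ followed by contracting $x_1x_2$, your $f$-vector count $f_0\mapsto f_0+1$, $f_1\mapsto f_1+d$ confirming the asserted $g_2(\D')=g_2(\D)-1$. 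The only phrase to soften is calling $B_1,B_2$ triangulated balls: since $\lk u$ is merely a homology $(d-1)$-sphere, the two pieces are homology balls (the paper records exactly this as a connected-sum decomposition into homology spheres), but nothing in your edge count or in the recovery of $\D$ uses more than that.
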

\begin{proof}
Since $\lk u$ is not prime, we have $\lk u=S_1\#_{\s}S_2$, where $S_1$ and $S_2$ are homology $3$-spheres and $\s$ is a missing $3$-simplex in $\lk u$. Since $\D$ is prime, $\s\notin\D$. Let $\D'= (\D\setminus\{\t:u\leq \t\})\cup x_1\star (S_1\cup\{\s\})\cup  x_2\star (S_2\cup\{\s\})$, where $x_1$ and $x_2$ are two new vertices. Then $g_2(\D')=g_2(\D)-1$, and $\D$ is obtained from $\D'$ by applying a bistellar 1-move and an edge contraction.
\end{proof}
 
 The operation employed in Lemma \ref{non prime vertex links}, which involves replacing $\{\t:u\leq \t\}$ with $x_1\star (S_1\cup\{\s\})\cup  x_2\star (S_2\cup\{\s\})$ in $\D$, was originally defined in \cite{Swartz2008, Walkup}. In this context, we have used the reverse of that operation, which is a combination of a bistellar 1-move and an edge contraction.
 
 \begin{definition}
 {\rm For $\a\in\mathbb{N}$, the class $\H_{\a}$ consists of all homology $4$-manifolds $\D$ such that $g_2(\D)=\a$, and  $\D$ satisfies the following conditions:
 \begin{enumerate}[$(i)$]
 \item  $\D$ is prime, i.e., if $\D$ contains the boundary complex $\p(\s^4)$ of a $4$-simplex $\s^4$, then $\s^4\in\D$,
 \item $\lk v$ is prime for every vertex $v\in\D$,
 \item if $u$ is a vertex in $\D$ such that $\lk u=C\star\p(\s^2)$, where $\s^2$ is a $2$-simplex and $C$ is a cycle of length $n\geq 4$, then $\s^2\in\D$, and every vertex of $C$ is incident to at least one diagonal edge in $\D[V(C)]$.
 \end{enumerate}
 }
 \end{definition}
 
If a homology $4$-manifold $\D$ contains a vertex that has a non-prime link, then the structure of $\D$ can be deduced from Lemma \ref{non prime vertex links}. In a similar spirit, if $\D$ contains a vertex, say $u$, such that $\lk u = C\star \p(\s^2)$, where $C$ is a cycle of length $n\geq 4$, $\s^2$ is a 2-simplex, and either $\s^2$ is a missing $2$-simplex in $\D$, or there is a vertex in $C$ that is not incident to a diagonal edge in $\D[V(C)]$, then we can characterize the structure of $\D$ from Lemmas \ref{missing pqc} and \ref{missing edge}. Hence, considering Lemmas \ref{missing pqc}, \ref{missing edge}, and \ref{non prime vertex links}, it is sufficient to focus on analyzing the class $\H_{\a}$ in order to describe all homology $4$-manifolds with $g_2\leq 5$. It follows from Lemma \ref{> 1 for prime u} that $g_2(\lk v)\geq 1$ for every vertex $v$ in a homology $4$-manifold $\D$ belonging to the class $\H_{\a}$.
 
 \begin{Lemma}\label{g2<3 exists}
Let $\D\in\H_{\a}$, where $\a\leq 5$. Then there is a vertex $u\in\D$ with $g_2(\lk u)\leq 2$.
\end{Lemma}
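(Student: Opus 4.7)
The plan is to argue by contradiction: suppose every vertex $v\in\D$ satisfies $g_2(\lk v)\geq 3$, and then derive a numerical impossibility by combining two ingredients already available in the paper, namely the Swartz--McMullen identity (Lemma \ref{g-relations}) and the vanishing of $g_3$ in this range (Lemma \ref{g3=0}).

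First I would observe that since $\D\in\H_{\a}$ with $1\leq\a\leq 5$, Lemma \ref{g3=0} forces $g_3(\D)=0$. Plugging $k=2$ into Lemma \ref{g-relations} then yields
$$\sum_{v\in\D} g_2(\lk v) \;=\; 3g_3(\D)+4g_2(\D) \;=\; 4g_2(\D) \;\leq\; 20.$$
Under the contrary assumption $g_2(\lk v)\geq 3$ for every vertex, the left-hand side is at least $3f_0(\D)$, so the above gives the upper bound $f_0(\D)\leq 6$.

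Next I would show that this is incompatible with $g_2(\D)\geq 1$. Using the trivial bound $f_1(\D)\leq\binom{f_0(\D)}{2}$ together with the definition $g_2(\D)=f_1(\D)-5f_0(\D)+15$, we obtain
$$g_2(\D) \;\leq\; \binom{f_0}{2}-5f_0+15 \;=\; \frac{(f_0-5)(f_0-6)}{2},$$
which is non-positive whenever $f_0\leq 6$. This contradicts $g_2(\D)\geq 1$, so there must exist some vertex $u$ with $g_2(\lk u)\leq 2$, as desired.

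There is really no substantive obstacle here beyond lining up the two existing lemmas; the only auxiliary point is the elementary observation that a homology $4$-manifold with positive $g_2$ requires at least $7$ vertices, which is what converts the upper bound $f_0\leq 6$ into an actual contradiction.
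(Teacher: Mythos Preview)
Your proof is correct and follows essentially the same approach as the paper: contradiction via Lemma~\ref{g-relations} combined with $g_3(\D)=0$ from Lemma~\ref{g3=0} to force $f_0(\D)\leq 6$, then observe this is incompatible with $g_2(\D)\geq 1$. The only cosmetic difference is in the last step: the paper invokes that a homology $4$-manifold on at most six vertices is the boundary of a simplex (hence $g_2=0$), whereas you use the elementary edge bound $g_2\leq\binom{f_0}{2}-5f_0+15$. One small wording slip: the quantity $\tfrac{(f_0-5)(f_0-6)}{2}$ is not non-positive for all $f_0\leq 6$ (it is positive for $f_0\leq 4$); what makes your argument go through is that a homology $4$-manifold already has $f_0\geq 6$, forcing $f_0=6$ and hence $g_2\leq 0$ --- which you do note in your final paragraph.
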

\begin{proof}
If possible, let $g_2(\lk v)\geq 3$ for every vertex $v\in\D$. Then by Lemmas \ref{g-relations} and \ref{g3=0}, $3f_0(\D)\leq 0 + 4\cdot 5=20$, i.e., $f_0(\D)\leq 6$. Since $\D$ is a homology 4-manifold, it must be a stacked sphere. However, this contradicts the fact that $g_2(\D)\geq 1$. Thus, the result follows.
\end{proof}

\begin{Lemma}\label{octahedral 4-sphere}
Let $\D\in\H_{\a}$, where $4\leq\a\leq 5$. If $u$ is a vertex in $\D$ such that $\lk u$ is an octahedral $3$-sphere, then $\D$ is obtained by an edge expansion from a homology $4$-manifold with $g_2\leq 3$.
\end{Lemma}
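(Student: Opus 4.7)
The plan is to locate an edge $u a_1$ of $\D$ whose contraction satisfies the link condition, so that $\D$ is recovered from the contracted complex $\D'$ as the central retriangulation (a special case of edge expansion) at a new edge of $\D'$. Writing the octahedral link as
\[\lk u = \p(a_1 b_1)\star \p(a_2 b_2)\star \p(a_3 b_3)\star \p(a_4 b_4),\]
the only non-edges among $V(\lk u)$ are the four antipodal diagonals $a_i b_i$. A short face-by-face check then shows that, for each $i$, the link condition $\lk u \cap \lk a_i = \lk(u a_i)$ in $\D$ is equivalent to $a_i b_i \notin \D$: if $a_i b_i \in \D$, then $b_i \in \lk u \cap \lk a_i$ but $b_i \notin \lk(u a_i)$ (since $a_i b_i \notin \lk u$ forces $u a_i b_i \notin \D$), so the link condition fails; conversely, if $a_i b_i \notin \D$, no face of $\lk u \cap \lk a_i$ can contain $b_i$, and the remaining faces are forced to lie in the octahedral $2$-sphere $\lk(u a_i)$.

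The crucial step is to show that at least one diagonal is absent from $\D$. I would argue by a rigidity/stress count. The octahedral $3$-sphere has $g_2 = 2$ and is generically $4$-rigid, so by Proposition~\ref{Cone Lemma}, $\st u$ is generically $5$-rigid with $g_2(\st u) = 2$. Every diagonal $a_i b_i$ that lies in $\D$ adds one edge to the induced subcomplex $\D[V(\st u)]$ over $\st u$; since $\st u$ is already $5$-rigid, its rigidity matrix attains full column rank, so each added edge increases the stress-space dimension, hence $g_2$ (by Proposition~\ref{g2 as dimension of stress space}), by exactly $1$. Meanwhile, any stress on $\D[V(\st u)]$ extends by zero to a stress on $\D$, yielding $g_2(\D[V(\st u)]) \leq g_2(\D) \leq 5$. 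If all four diagonals were in $\D$ we would obtain $g_2(\D) \geq 2+4 = 6$, a contradiction. Thus at most three diagonals lie in $\D$, and after relabeling I may assume $a_1 b_1 \notin \D$.

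With the link condition verified for the pair $(u, a_1)$, I contract the edge $u a_1$ to produce $\D'$; since $\lk u$ is a triangulated $3$-sphere, $|\D'|$ is PL-homeomorphic to $|\D|$ and $\D'$ is again a homology $4$-manifold. A routine $f$-vector calculation (using the link condition so that $u$ and $a_1$ share exactly the $6$ vertices of $\lk(u a_1)$) gives $f_0(\D') = f_0(\D) - 1$ and $f_1(\D') = f_1(\D) - 7$, so $g_2(\D') = g_2(\D) - 2 \leq 3$. Finally, in $\D'$ the edge $a_1 b_1$ is now present with $\lk_{\D'}(a_1 b_1) = \p(a_2 b_2)\star \p(a_3 b_3)\star \p(a_4 b_4)$, and performing the stellar subdivision of $\D'$ at this edge with new vertex $u$ recovers $\D$. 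By Definition~\ref{contraction-expansion} this is precisely a central retriangulation, as required.

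The step I expect to require the most care is the stress-counting in the second paragraph: it chains together the Cone Lemma, the identification of $g_2$ with stress-space dimension for generically $5$-rigid graphs, the monotonicity of the stress space under passing to induced subcomplexes that remain rigid, and the fact that adding an edge to an already $5$-rigid graph raises $g_2$ by exactly one. Each ingredient is standard on its own, but combining them cleanly to get the sharp equality $g_2(\D[V(\st u)]) = 2 + \#\{i : a_i b_i \in \D\}$ is what makes the argument close out.
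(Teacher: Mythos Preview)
Your proof is correct and follows essentially the same approach as the paper. Your contraction of the edge $u a_1$ is exactly the paper's replacement of $\st u$ by the ball $S\star(a_1 b_1)$, and your rigidity/stress count is the detailed justification of what the paper asserts in a single line (``Since $4\leq g_2(\D)\leq 5$, there are at least two antipodal vertices in $\lk u$ that are not adjacent in $\D$'').
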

\begin{proof}
Since $\lk u$ is an octahedral 3-sphere, it is prime with $g_2(\lk u)=2$.  Given that $4\leq g_2(\D)\leq 5$, there must be at least two antipodal vertices, say $x$ and $y$, in $\lk u$ that are not adjacent in $\D$. Consider the octahedral $2$-sphere $S = \D[V(\lk u)\setminus\{x,y\}]$ and the 4-dimensional combinatorial ball $B=xy\star S$. Then $\p B=\lk u$. Let $\D'$ be the homology $4$-manifold obtained from $\D$ by replacing $\st u$ with $B$. Then $g_2(\D')=g_2(\D)-2$ and $\D$ is obtained  by a central retriangulation of $\D'$ along $st_{\D'}xy$. This completes the proof.
\end{proof}
\section{4-dimensional Homology Manifolds with $g_2=3$}
\begin{Lemma}\label{g2=3, g2(lk u)<2 exists}
Let $\D$ be a homology $4$-manifold with $g_2(\D)=3$. Then there is a vertex $u$ in $\D$ with $g_2(\lk u)\leq 1$.
\end{Lemma}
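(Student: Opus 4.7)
The plan is to mimic the counting argument used in Lemma \ref{g2<3 exists}, but tuned to the hypothesis $g_2(\Delta)=3$ and the weaker conclusion $g_2(\lk u)\leq 1$.

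First I would argue by contradiction: assume $g_2(\lk v)\geq 2$ for every vertex $v\in\Delta$. Since $\Delta$ is a homology $4$-manifold with $g_2=3$, it satisfies $1\leq g_2(\Delta)\leq 5$, so Lemma \ref{g3=0} gives $g_3(\Delta)=0$. Plugging $d=4$, $k=2$ into Lemma \ref{g-relations} yields
$$\sum_{v\in\Delta} g_2(\lk v)=3g_3(\Delta)+4g_2(\Delta)=0+4\cdot 3=12.$$
Combined with the standing assumption $g_2(\lk v)\geq 2$, this forces $2f_0(\Delta)\leq 12$, i.e.\ $f_0(\Delta)\leq 6$.

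Next I would use the well-known fact that any homology $4$-manifold has at least $6$ vertices, and the only such with exactly $6$ vertices is the boundary of the $5$-simplex $\partial\sigma^5$, which is a stacked $4$-sphere with $g_2=0$. This contradicts $g_2(\Delta)=3$, so the assumption that every vertex link has $g_2\geq 2$ fails, and there must exist $u\in\Delta$ with $g_2(\lk u)\leq 1$, as desired.

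There is essentially no main obstacle here — the proof is a direct two-line counting argument mirroring that of Lemma \ref{g2<3 exists}. The only subtlety worth flagging is verifying that the only homology $4$-manifold on at most $6$ vertices is $\partial\sigma^5$; one can also see this purely numerically by noting that $g_2=3$ together with $f_0\leq 6$ would demand $f_1=3+5f_0-15\leq 18$ edges on at most $6$ vertices, which is fine, but the stronger structural fact that $f_0=6$ forces $\Delta=\partial\sigma^5$ (hence $g_2=0$) gives the clean contradiction.
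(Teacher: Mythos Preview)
Your proof is correct and follows essentially the same argument as the paper's own proof: both invoke Lemma~\ref{g3=0} to get $g_3(\Delta)=0$, apply Lemma~\ref{g-relations} to obtain $\sum_v g_2(\lk v)=12$, deduce $f_0(\Delta)\leq 6$ from the assumption $g_2(\lk v)\geq 2$, and conclude a contradiction with $g_2(\Delta)=3$. You spell out the final contradiction in slightly more detail than the paper (which simply asserts that $f_0\leq 6$ contradicts $g_2=3$); note, incidentally, that your aside about the numerical route actually \emph{does} work directly: $f_0=6$ would force $f_1=18>\binom{6}{2}=15$, already impossible.
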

\begin{proof}
 It follows from Lemma \ref{g3=0} that $g_3(\D)=0$. Suppose $g_2(\lk v)\geq 2$ for every vertex $v\in\D$. Then Lemma \ref{g-relations} implies that $2f_0(\D)\leq 3\cdot4=12$, i.e., $f_0(\D)\leq 6$. This contradicts the fact that $g_2(\D)=3$. Thus, there exists a vertex $u\in\D$ with $g_2(\lk u)\leq 1$.
 \end{proof}
 \begin{Lemma}\label{g2=3, g2(lku)=1, d(u)>=7}
Let $\D\in\H_{3}$, and let $u$ be a vertex in $\D$ with $g_2(\lk u)=1$. Then  $d(u) = 6$. Moreover, $\D$ is obtained from a triangulated sphere with $g_2\leq 2$ by an edge expansion.
\end{Lemma}

 \begin{proof}
If possible, let $\lk u=C\star\p(pqc)$, where $pqc$ is a 2-simplex and $C$ is a cycle of length $n\geq 4$. Since $g_2(\D)=3$, $\lk u$ contains at most two missing edges that are present in $\D$. Since every vertex of $C$ is incident to at least one diagonal edge, we have $n=4$. Therefore, by Proposition \ref{same vertex set}, $V(\D)=V(\st u)$. However, $pqc\in\D$ implies that $V(\lk pqc)\cap V(C)\neq\emptyset$. Let $a\in V(\lk pqc)\cap V(C)$. Then, $\p(upqc)\subseteq\lk a$. However, $upqc\notin\D$, which contradicts the fact that $\lk a$ is prime. Hence, $d(u)=6$.

Let $\lk u=\p(abt)\star\p(pqc)$, where $abt$ and $pqc$ are two $2$-simplices. Suppose both $abt$ and $pqc$ are faces of $\D$. By similar arguments as in the last paragraph, $\{a,b,t\}\cap V(\lk pqc)=\emptyset$. Therefore, $V(\lk pqc)\subseteq V(\D\setminus\st u)$. Similarly, $V(\lk abt)\subseteq V(\D\setminus\st u)$. Hence, $d(x)\geq 8$ for every vertex $x$ in $\lk u$. Therefore, using the arguments as in the last paragraph, we have $g_2(\lk x)\geq 2$ for every vertex $x$ in $\lk u$. However, the equations $\sum_{v\in\D} g_2(\lk v)=3g_3(\D)+4g_2(\D)$ and $g_3(\D)=0$ imply that $6\cdot 2+1<0+3\cdot4$, which is not possible. Thus, at least one of $abt$ and $pqc$ is a missing triangle in $\D$. Therefore, the result follows from Lemma \ref{missing pqc}.
 \end{proof}
\begin{Theorem}\label{prime theorem for g2=3}
Let $\D$ be a prime homology $4$-manifold with $g_2(\D)=3$. Then $\D$ is obtained from a triangulated sphere with $g_2\leq 2$ by either an edge expansion or a bistellar $1$-move and an edge contraction.
\end{Theorem}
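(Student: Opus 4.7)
The plan is to split on whether every vertex link in $\D$ is prime. First I would handle the case where some vertex $u$ has a non-prime link: then Lemma~\ref{non prime vertex links} directly presents $\D$ as the result of a bistellar $1$-move and an edge contraction applied to a homology $4$-manifold $\D'$ with $g_2(\D')=2$. Since $g_2(\D')\leq 2$, Propositions~\ref{Nevod>3} and~\ref{Zhengd>3}, together with the remark following Theorem~\ref{main}, imply that $\D'$ is a triangulated $4$-sphere, completing this branch with the bistellar-$1$-move-plus-edge-contraction presentation.

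Otherwise every vertex link is prime, and since $\D$ is prime by hypothesis, $\D\in\H_{3}$. By Lemma~\ref{g2=3, g2(lk u)<2 exists} there is a vertex $u$ with $g_2(\lk u)\leq 1$; because $\lk u$ is prime and $g_2(\D)\geq 1$, Lemma~\ref{> 1 for prime u} gives $g_2(\lk u)\geq 1$, so $g_2(\lk u)=1$. Proposition~\ref{Nevo g2=1} then identifies $\lk u$ with $C\star\p(pqc)$ for some $n$-cycle $C$ with $n\geq 3$, so $d(u)=n+3$.

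I would then split on the length of $C$. When $n=3$ (so $d(u)=6$), Lemma~\ref{g2=3, g2(lku)=1, d(u)=6} directly exhibits $\D$ as an edge expansion of a triangulated sphere with $g_2\leq 2$. When $n\geq 4$ (so $d(u)\geq 7$), Lemma~\ref{g2=3, g2(lku)=1, d(u)>=7} forces $\D$ to violate property $(*)$: either $pqc\notin\D$, in which case Lemma~\ref{missing pqc} realizes $\D$ as an edge expansion of a homology $4$-manifold with $g_2\leq 3-(n-2)\leq 1$, or some vertex of $C$ is incident to no diagonal of $C$, in which case Lemma~\ref{missing edge} realizes $\D$ as an edge expansion of a homology $4$-manifold with $g_2\leq 2$. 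In every sub-case the target has $g_2\leq 2$ and is therefore a triangulated $4$-sphere.

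The real content has been packaged into the four preparatory lemmas above; the main obstacle they collectively overcome is to locate a vertex whose link is combinatorially simple enough to support an admissible edge contraction, and then to show that the structure $\lk u=C\star\p(pqc)$, combined with primeness of $\D$ and of all its vertex links, leaves essentially no room for property $(*)$ to persist once $d(u)\geq 7$. Given those lemmas the theorem is a bookkeeping case split; the only thing to verify is that each reduction indeed decreases $g_2$ to at most $2$, which is explicit in the cited lemmas.
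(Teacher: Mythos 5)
Your proposal is correct and follows essentially the same route as the paper: split on whether some vertex link fails to be prime (handled by Lemma~\ref{non prime vertex links}), and otherwise work in $\H_3$, locate a vertex with $g_2(\lk u)=1$ via Lemmas~\ref{g2=3, g2(lk u)<2 exists} and~\ref{> 1 for prime u}, and finish with Lemmas~\ref{missing pqc}, \ref{missing edge}, \ref{g2=3, g2(lku)=1, d(u)>=7}, and~\ref{g2=3, g2(lku)=1, d(u)=6}. Your write-up merely makes explicit the case split on $d(u)$ and the reduction to $g_2\leq 2$ spheres that the paper leaves implicit.
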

\begin{proof}
If $\D\notin\H_3$, then the result follows from Lemmas \ref{missing pqc}, \ref{missing edge}, and \ref{non prime vertex links}. Now, assume that $\D\in \H_3$. It follows from Lemma \ref{g2=3, g2(lk u)<2 exists} that there is a vertex $v$ in $\D$ with $g_2(\lk v)\leq 1$. For every vertex $y\in\D$, we have $g_2(\lk y)\geq 1$, implying the existence of a vertex, say $u$, in $\D$ such that $g_2(\lk u)=1$. Therefore, we can conclude the result from Lemma \ref{g2=3, g2(lku)=1, d(u)>=7}.
\end{proof}

\section{4-dimensional Homology Manifolds with $g_2=4$}
\begin{Lemma}\label{d=4, g2(lku)=1, d(u)>=8}
Let $\D\in\H_{4}$. If $u$ is a vertex in $\D$ with $g_2(\lk u)=1$, then $d( u) = 6$.
\end{Lemma}
\begin{proof}
If possible, let $d( u)\geq 7$. Consider $\lk u=C\star\p(pqc)$, where $pqc$ is a 2-simplex, and $C$ is a cycle of length $n\geq 4$. Here, $pqc\in\D$, and every vertex of $C$ is incident to at least one diagonal edge in $\D[V(C)]$. 

Suppose $n\geq 5$, which implies that $d(u)\geq 8$. Since $g_2(\D)=4$, $\lk u$ contains exactly three missing edges that are present in $\D$. Therefore, by Proposition \ref{same vertex set}, $V(\D)=V(\st u)$. Since $pqc\in\D$, $\lk pqc$ is a cycle containing at least three vertices of $C$. Let $s\in V(C)\cap V(\lk pqc)$. Then, it follows that $\partial(upqc) \subseteq \lk s$. However, $upqc \notin \D$, which contradicts the fact that $\lk s$ is prime. Therefore, $n=4$, and we have $d(u)= 7$.


Let $C=C(a,b,d,t)$ as illustrated in Figure \ref{fig:1}. By a similar argument to the previous paragraph, we find that $V(\lk pqc)\subseteq V(\D\setminus \st u)$. Furthermore, $\lk u$ contains precisely two missing edges that are present in $\D$, as indicated in Figure \ref{fig:1} with dotted lines. We claim that $d(k)\geq 8$ for every vertex $k\in C$.

\begin{figure}[ht]
\tikzstyle{ver}=[]
\tikzstyle{vertex}=[circle, draw, fill=black!100, inner sep=0pt, minimum width=4pt]
\tikzstyle{edge} = [draw,thick,-]
\centering

\begin{tikzpicture}[scale=1.2]
\begin{scope}[shift={(0,0)}]
\foreach \x/\y/\z in {0/0/d,2/0/b,2/1.6/a,0/1.6/t,3.5/0/q,6/0/c,4.5/1.6/p}{
\node[vertex] (\z) at (\x,\y){};}
\foreach \x/\y in {a/b,a/t,p/q,q/c,p/c,d/t,b/d}{\path[edge] (\x) -- (\y);}
\foreach \x/\y in {a/d,b/t}{\draw[densely dotted] (\x) -- (\y);}
\foreach \x/\y/\z in {-.3/-.1/d,2.3/-0.1/b,2.25/1.8/a,-.3/1.8/t,3.2/-.1/q,6.2/-0.2/c,4.5/1.9/p}
\node[ver] () at (\x,\y){$\z$};
\end{scope}
\end{tikzpicture}
\caption{} \label{fig:1}
\end{figure}

 Without loss of generality, let $d(a)=7$. If $\lk ad$ contains exactly $4$ vertices, then it is the boundary complex of a $3$-simplex. Since $ad\notin\lk pqc$, $\lk ad$ contains precisely two vertices from $\{p,q,c\}$. Without loss of generality, let $\{p,q\}\subseteq \lk ad$. Then $\lk ad=\p(btpq)$,  and consequently,  $d\star\p(btpq)\subseteq\lk a$.  As $\lk a $ is prime, $\p(btpq)\subseteq\lk a$ implies $btpq\in\lk a$.  Therefore, $\lk a=\p(dbtpq)$, contradicting the fact that $c\in\lk a$. Thus, $V(\lk ad)=\{p,q,c,b,t\}$, and $\lk ad$ is one of the following:

\begin{enumerate}[$(i)$]
\item $\p(pqc)\star\p(bt)$,
\item $\p(btc)\star\p(pq)$, $\p(btp)\star\p(cq)$, or $\p(btq)\star\p(pc)$.
\end{enumerate}

If $\lk ad$ is of type $(i)$, then $\lk pq$ contains the triangles $adt$ and $adb$. However, $\lk u = C \star \partial(pqc)$ implies that $u\star C \subseteq \lk pq$. Since $\lk pq$ is closed, the maximal simplices of $\lk pq$ consist of the 2-simplices from $(u \star C) \cup \{adt, adb\}$, which contradicts the fact that $c \in \lk pq$. Now, let $\lk ad$ be of type $(ii)$. Without loss of generality, assume that $\lk ad = \partial(btc) \star \partial(pq)$. Then, $adb$ and $adt$ belong to $\lk pc$. Furthermore, $\lk u = C \star \partial(pqc)$ implies that the maximal simplices of $\lk pc$ consist of the 2-simplices from $(u\star C) \cup \{adt, adb\}$. This contradicts the fact that $q \in \lk pc$. Therefore, $d(a) \geq 8$. Using similar arguments, we can conclude that $d(k) \geq 8$ for every vertex $k \in C$. 

Since $V(\lk pqc) \subseteq V(\D \setminus \st u)$, $\D \setminus \st u$ must contain at least three vertices. Therefore, for every vertex $x\in\lk u$, we have $d(x)\geq 8$. Hence, the arguments in the second paragraph of the proof imply that $g_2(\lk x)\geq 2$. Furthermore, we can calculate that $\sum_{v \in \D} g_2(\lk v) = \sum_{x \in \lk u} g_2(\lk x) + g_2(\lk u) + \sum_{y \in \D \setminus \st u} g_2(\lk y) \geq 18$. However, $g_3(\D) = 0$ implies that $\sum_{v \in \D} g_2(\lk v) \leq 4 \cdot g_2(\D) = 16$, which creates a contradiction. Hence, $d(u)=7$ is not possible. This completes the proof.
\end{proof}
\begin{Lemma}\label{d=4, g2(lku=2), d(u)>=8, no missing edge}
Let $\D\in\H_{4}$, and let $u$ be a vertex in $\D$ with $g_2(\lk u)=2$. If $G(\D[V(\lk u)])=G(\lk u)$, then $\D$ is obtained from a triangulated sphere with $g_2\leq 3$ by an edge expansion.
\end{Lemma}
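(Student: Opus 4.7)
My plan is to identify a distinguished vertex $w$ in $\lk u$ whose closed star separates $\lk u$ into two $3$-balls along a $2$-sphere $L_w$, contract the edge $uw$ in $\D$, and show that the resulting complex $\D'$ is a homology $4$-sphere with $g_2(\D')\leq 3$; then $\D$ is recovered from $\D'$ by an edge expansion using $L_w$ as the separator.

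First I would pin down the combinatorial type of $\lk u$. Since $\D\in\H_4$, the link $\lk u$ is a prime homology $3$-sphere with $g_2=2$ that is not octahedral. Proposition \ref{Zheng g2=2} then gives that $\lk u$ is the stellar subdivision at some edge $e$ of a triangulated $3$-sphere $\Sigma$ with $g_2(\Sigma)=1$, and by Proposition \ref{Nevo g2=1}, $\Sigma = C_n\star\p(xyz)$ for some $n\geq 3$. A direct $f$-vector computation shows that stellar subdivision at an edge with $\lk_\Sigma(e)$ a $k$-cycle changes $g_2$ by $k-3$, forcing $k=4$; examining the edges of $C_n\star\p(xyz)$, this happens either for an edge of $\p(xyz)$ with $n=4$, or for a mixed edge such as $v_1x$ for arbitrary $n\geq 3$. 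A direct check shows the former case produces the octahedral $3$-sphere on the four antipodal pairs $\{v_1,v_3\}, \{v_2,v_4\}, \{x,y\}, \{w,z\}$, contradicting the hypothesis, so $e$ must be a mixed edge. The new subdivision vertex $w$ then has $\lk_{\lk u}(w) = \p e\star\lk_\Sigma(e)$, a join of three copies of $S^0$, i.e., an octahedral $2$-sphere $L_w$ on $6$ vertices.

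I would then contract the edge $uw$ in $\D$; note that $\lk_\D(uw) = L_w$. For the link condition $\lk_\D u\cap\lk_\D w = L_w$ the vertex-level equality is immediate from the hypothesis: for any $z\in V(\lk u)\cap V(\lk w)\setminus\{u,w\}$, the edges $uz,wz$ lie in $\D$ with $z,w\in V(\lk u)$, so $G(\D[V(\lk u)])=G(\lk u)$ forces $wz\in\lk u$, whence $z\in V(L_w)$. Extending to higher-dimensional simplices, one must show that for any face $\sigma\in\lk u\cap\lk w$ (so $V(\sigma)\subseteq V(L_w)$), the simplex $\sigma\cup\{w\}$ lies in $\lk u$; here one would exploit the primeness of $\lk u$ to exclude missing $3$-simplices and use the octahedral structure of $L_w$ to bootstrap from the $1$-skeleton hypothesis. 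Once the link condition is established, contracting $uw$ yields a homology $4$-manifold $\D'$ with $g_2(\D') = g_2(\D) - (|V(L_w)| - 4) = 4-2 = 2\leq 3$, and $\D$ is recovered from $\D'$ by an edge expansion at the identified vertex with separator $L_w$.

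The main obstacle I anticipate is making rigorous the simplex-level link condition from only the $1$-skeleton hypothesis. In the relevant case the induced subcomplex $\lk u[V(L_w)]$ typically contains edges not in $L_w$ (for instance the ``same-pair'' edge $yz$ when $e=v_1x$), so there is potentially a missing triangle $\{y,z,w\}$ of $\lk u$ whose presence in $\D$ would spoil the link condition at $uw$. Ruling this out will require a careful structural argument exploiting the explicit list of facets of $\lk u$ coming from the subdivision description together with the homology-manifold property of $\D$, to constrain which extra $2$- or $3$-simplices supported on $V(\lk u)$ can actually occur in $\D$.
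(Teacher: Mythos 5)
Your argument hinges on a misreading of Proposition \ref{Zheng g2=2}. A ridge of a $3$-dimensional sphere is a $2$-face, i.e.\ a triangle, not an edge; accordingly, the paper's proof takes $\lk u$ to be obtained from a triangulated $3$-sphere $K$ with $g_2(K)=1$ by the stellar subdivision of a triangle $\s^2$, so the new vertex $w$ has as its link in $\lk u$ the $5$-vertex bipyramid $\p\s^2\star\p(rs)$, not a $6$-vertex octahedral $2$-sphere. Your substitute classification (``$\lk u$ is the edge subdivision of $C_n\star\p(xyz)$ at a mixed edge'') therefore does not follow from the cited results, and it is not even extensionally correct: for instance, the stellar subdivision of $C_5\star\p(pqc)$ at the triangle $v_1v_2p$ is a prime, non-octahedral homology $3$-sphere with $g_2=2$ in which no vertex has an octahedral $2$-sphere link, so it is not the edge subdivision of any $3$-sphere along an edge with a $4$-cycle link; links of this type lie entirely outside your case analysis. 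Moreover, you invoke Proposition \ref{Nevo g2=1} for the $g_2=1$ sphere, but that proposition applies only to prime spheres; in general $K$ is obtained from $C\star\p(pqc)$ by facet subdivisions, and this non-prime case is a separate (indeed the first) case of the paper's proof, handled by contracting $uw$ there.

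Even granting your structural picture, the proposal is incomplete on its own terms: the simplex-level verification of the link condition for $uw$, which you yourself flag as the main obstacle, is precisely the substantive content and is not supplied. And the plan ``always contract the edge from $u$ to the subdivision vertex'' does not match what is actually needed in the correct framework: in the paper's argument, when $K=C\star\p(pqc)$ is prime with $|C|\geq 4$ the edge contracted is $uz$ for a vertex $z\in C$ not adjacent to $w$, and when $C$ is a $3$-cycle no admissible contraction is available until a counting argument via Lemma \ref{g-relations} and Lemma \ref{g3=0} forces $V(\D\setminus\st u)$ to contain at most one vertex, whence $abt$ and $pqt$ are missing triangles and the pair $(u,t)$ (or $(u,c)$ when $pqc\notin\D$) satisfies the link condition, the resulting sphere having $g_2=2$ or $3$. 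None of these ingredients appears in your sketch, so as written it does not establish the lemma.
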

\begin{proof}
If $\lk u$ is an octahedral 3-sphere, then the result follows from Lemma \ref{octahedral 4-sphere}. Now, assume that $\lk u$ is not an octahedral $3$-sphere. According to Lemma \ref{Zheng g2=2}, $\lk u$ arises from a triangulated $3$-sphere $K$ with $g_2=1$ by subdividing a triangle. Lemma \ref{Nevo g2=1} implies that $K$ results from finitely many facet subdivisions of a prime triangulated $3$-sphere $K' = C\star\partial(pqc)$, where $C$ is a cycle of length $n\geq 3$ and $pqc$ is a $2$-simplex. If $K$ is prime, then $K = K'$.
Let $\lk u$ be derived from $K$ by subdividing a triangle, denoted $\tau^2$, with a new vertex $w$. The cycle $C$ or the triangle $pqc$ contains exactly one vertex of the triangle $\tau^2$, designated as the central vertex of $\tau^2$. Moreover, $\tau^2$ is a proper face of each facet that participated in the subdivision process; otherwise, it would contradict the fact that $\lk u$ is prime.

Let $K$ not be prime. Since $G(\lk u)=G(\D[V(\lk u)])$, the vertex $w$ is not incident to any missing edge in $\lk u$ that is present in $\D$. Consequently, $\lk u\cap\lk w=\lk uw=\p(\t^2)\star\p(rs)$, where $lk_{K}\t^2=\p(rs)$, and we can contract the edge $uw$. Let $\D'$ be the resulting complex. Then $g_2(\D')=3$, and $\D$ is obtained from $\D'$ through an edge expansion.

Suppose $K$ is prime. Then $K=K'=C\star\p(pqc)$, where $C$ is a cycle of length $n$, and $pqc$ is a 2-simplex, as mentioned earlier. Let $n\geq 4$. Since $\lk u$ contains no missing edges that are present in $\D$, there is a vertex $z\in C$ that is not adjacent to $w$. Consequently, for the vertex $z$, we have $\lk u\cap\lk z=\lk uz=\p(pqc)\star\p(xy)$, where $x$ and $y$ are the neighboring vertices of $z$ in $C$. Thus, we can contract the edge $zw$, and therefore, we are done. 

Now, consider the case where $n=3$, and let $K=\p(abt)\star\p(pqc)$, where $abt$ is a 2-simplex. Without loss of generality, let $\t^2=pqt$. In this case, $wc\notin\D$. If $pqc\in\D$, then $\lk pqc$ contains at least two elements from $V(\D\setminus\st u)$, implying $d(x)\geq 7$ for each vertex in $\{a,b,t,p,q,c\}$. Thus, $g_2(\lk x)\geq 2$ for every vertex $x\in\{a,b,t,p,q,c\}$. From Lemma \ref{g-relations}, we conclude that $V(\D\setminus\st u)$ contains at most one element, which contradicts the fact that $\lk pqc$ contains at least two elements from $V(\D\setminus\st u)$. Therefore, $pqc$ is a missing triangle in $\D$, and the pair $(u,c)$ satisfies the link condition. Let $\D'$ be the complex obtained by contracting the edge $uc$. Then $g_2(\D') = 3$, and $\D$ is obtained from $\D'$ by an edge expansion.
\end{proof}

\begin{Lemma}\label{d=4, g2(lku=2), d(u)>=8, one missing edge}
Let $\D\in\H_{4}$, and let $u$ be a vertex in $\D$ with $g_2(\lk u)=2$. If $\lk u$ contains at least one missing edge that is present in $\D$, then $\D$ is obtained from a triangulated sphere with $g_2\leq 3$ by an edge expansion. 
\end{Lemma}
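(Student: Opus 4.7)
The plan is to exhibit a vertex $z\in V(\lk u)$ such that the edge $uz$ satisfies the link condition and contracting it produces a homology $4$-manifold with $g_2\leq 3$. Call an edge $xy\in\D$ with $x,y\in V(\lk u)$ but $xy\notin\lk u$ a \emph{chord}. By Lemma \ref{Zheng g2=2} together with Lemma \ref{Nevo g2=1}, $\lk u$ is obtained from a prime $3$-sphere $K':=C\star\p(pqc)$ (where $C$ is an $n$-cycle, $n\geq 3$) by (possibly zero) facet subdivisions yielding $K$, followed by the stellar subdivision of some triangle $\s^2\in K$ with a new vertex $w$. A direct degree count shows every vertex of $V(C)\cup\{p,q,c,w\}$ has degree at least $5$ in $\lk u$: a cycle vertex has its two $C$-neighbors plus $p,q,c$; each of $p,q,c$ has the $n$ cycle vertices and the two other vertices of $\{p,q,c\}$; and $w$ is joined to the three vertices of $\s^2$ and the two vertices of ${\rm lk}_K(\s^2)$. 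Thus $\lk u$ has at least $n+4\geq 7$ vertices of degree $\geq 5$; the only possibly lower-degree vertices are facet-subdivision vertices of $K$ not incident to $\s^2$ (each of degree $4$).

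The next step is to bound the number of chords. The $f$-vector computation
$$g_2(\D[V(\st u)])=g_2(\lk u)+(\text{number of chords})=2+(\text{number of chords})$$
combined with a stress-space embedding bounds this number. The induced subcomplex $\D[V(\st u)]$ contains the cone $\st u$, which is generically $5$-rigid by Proposition \ref{Cone Lemma} (as $\lk u$ is generically $4$-rigid, being a normal $3$-pseudomanifold), and adding further edges preserves generic $5$-rigidity, so $\D[V(\st u)]$ is itself generically $5$-rigid. Extending stresses by zero gives an embedding of the stress space of $\D[V(\st u)]$ into that of $\D$, and Proposition \ref{g2 as dimension of stress space} then yields $g_2(\D[V(\st u)])\leq g_2(\D)=4$. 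Hence at most two chords exist, their endpoints comprising at most four vertices of $V(\lk u)$. Among the at least seven high-degree vertices, at least three are therefore incident to no chord, and I pick any such $z$.

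Because $z$ lies on no chord, any $v\in V(\lk u)$ with $vz\in\D$ already satisfies $vz\in\lk u$, so $\lk u\cap\lk z=\lk uz$; that is, $(u,z)$ satisfies the link condition. Since $\lk u$ is a triangulated $3$-sphere the edge $uz$ can be contracted, and writing $\D'$ for the resulting homology $4$-manifold, the counts $f_0(\D')=f_0(\D)-1$ and $f_1(\D')=f_1(\D)-1-f_0(\lk uz)$ give
$$g_2(\D')=g_2(\D)+4-f_0(\lk uz)=8-d(z,\lk u)\leq 3.$$
Consequently $\D$ is obtained from $\D'$ by an edge expansion, and by the structural results established for $g_2\leq 3$ (Theorem \ref{prime theorem for g2=3} together with Propositions \ref{Nevod>3} and \ref{Zhengd>3}), $\D'$ is a triangulated $4$-sphere.

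The main non-routine step is the chord bound via $g_2(\D[V(\st u)])\leq g_2(\D)$, which rests on the generic $5$-rigidity of $\D[V(\st u)]$ and the zero-extension embedding of stress spaces. The remaining work --- verifying degrees in $\lk u$ for every vertex type (cycle vertices, vertices of $\p(pqc)$, the subdivision vertex $w$, and facet-subdivision vertices when $K$ is not prime) and checking that low-degree subdivision vertices do not obstruct the choice of $z$ --- is direct, since the $n+4\geq 7$ high-degree original vertices already outnumber the $\leq 4$ chord endpoints.
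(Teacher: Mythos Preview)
Your argument has a real gap at the step ``Because $z$ lies on no chord, \ldots\ so $\lk u\cap\lk z=\lk uz$''. The no-chord hypothesis only gives you the equality on \emph{vertices}: every $v\in V(\lk u)$ with $vz\in\D$ lies in $V(\lk uz)$. It does not control higher-dimensional faces. For the link condition you must rule out a simplex $\tau\in\lk u$ with $z\tau\in\D$ but $uz\tau\notin\D$, and this can fail for $\dim\tau\geq 1$ even when $z$ carries no chord. Concretely, take $K$ prime, $C=C(a,b,d,t)$, $\s^2=abp$, and $z=c$. Then $c$ is adjacent to \emph{every} other vertex of $\lk u$ (so it is automatically chord-free and has degree $7\geq 5$, hence a legitimate choice under your rule), and $\lk u[V(\lk uc)]$ is the full antistar of $c$ in $\lk u$ --- a $3$-ball whose boundary $2$-sphere is $\lk uc$. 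The tetrahedron $dtpq$ lies in this antistar but not in $\lk uc$; nothing you have assumed prevents $cdtpq$ from being a facet of $\D$, in which case $dtpq\in\lk u\cap\lk c\setminus\lk uc$ and the contraction is illegal. The same obstruction arises for $z\in\{p,q\}$ and, in many configurations, for $z=w$ (where $rs\in\lk u$ but $rs\notin\lk uw$).

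The paper does not rely on any general ``chord-free $\Rightarrow$ link condition'' principle. Instead it picks $z$ from a restricted pool --- a subdivision vertex $w$ or $s_j$, or a cycle vertex of $C$ --- and verifies the link condition by hand, using that specific missing faces (such as $pqc$, $pqt$, the diagonals of $C$, or $wt$) are absent from $\lk u$, which forces $\lk u[V(\lk uz)]=\lk uz$. When every such candidate is blocked by a chord (for instance when $n=4$ and both diagonals of $C$ are present), the paper abandons the direct contraction and instead uses Lemma~\ref{g-relations} together with $g_3(\D)=0$ to produce a vertex $y$ with $g_2(\lk y)=1$ and $d(y)\geq 7$, reducing to the earlier $g_2=1$ lemmas. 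Your chord count ($\leq 2$ chords) and degree bookkeeping are correct and match the paper's implicit use, but they do not by themselves furnish a vertex at which the link condition holds; you still need either the case-by-case verification or the fallback argument.
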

\begin{proof}
If $\lk u$ is an octahedral 3-sphere, then the result follows from Lemma \ref{octahedral 4-sphere}. Now, assume that $\lk u$ is not an octahedral 3-sphere. Let $K,K',\t^2$, and $w$ be as described in Lemma \ref{d=4, g2(lku=2), d(u)>=8, no missing edge}. 

\vst
\noindent\textbf{Case 1:} Suppose $K$ is not prime, and let $lk_{K} \t^2=\{r,s\}$,  where $s$ is the vertex added to $K'$ at the last facet subdivision. The cycle $C$, or the triangle $pqc$, contains exactly one vertex of the triangle $\t^2$, which we refer to as the central vertex of $\t^2$. Note that if $w$ is not incident to any missing edge of $\lk u$ that is present in $\D$, then the result follows from a similar argument as in Lemma \ref{d=4, g2(lku=2), d(u)>=8, no missing edge}. For the remaining part of the proof, we assume that $w$ is incident to a missing edge in $\lk u$ that is present in $\D$. 

Let $S=\{s_1,\dots,s_m\}$ be the set of vertices added during the facet subdivisions of $K'$ where $s=s_m$. If each vertex in $S$ is incident to a missing edge in $\lk u$ that is present in $\D$, then every vertex in $S\cup V(\st w)$ has a degree of at least 7, and hence $g_2(\lk x)\geq 2$ for every vertex $x\in S\cup V(\st w)$. Since $g_3(\D)=0$, according to Lemma \ref{g-relations}, we have $f_0(\D)\leq 8$. However, since $K$ is not prime, $d(u)\geq 8$, and hence $f_0(\D)\geq 9$. This gives a contradiction. Let $s_j$ not be incident to any missing edge of $\lk u$ that is present in $\D$. Then $\lk s_j\cap\lk u=\lk us_j=\p(\t^2)\star\p(yz)$, where $y$ and $z$ are the vertices incident to $s_j$ in $\lk u$. Thus,  we can contract the edge $us_j$, and we are done.

\vst
\noindent\textbf{Case 2:}
Suppose $K$ is prime, i.e., $K=C\star\p(pqc)$, where $C$ is a cycle of length $n\geq 3$. Let $n=3$ and $K=\p(abt)\star\p(pqc)$, where $abt$ is a 2-simplex. Without loss of generality, we assume that $\t^2=pqt$. Then $wc$ is the missing edge in $\lk u$, which is present in $\D$. Therefore, $d(x)\geq 7$, and hence $g_2(\lk x)\geq 2$ for every vertex $x\in\lk u$. Now, Lemma \ref{g-relations} implies that $V(\D\setminus \st u)=\emptyset$. Therefore, $abt$, $pqc$, and $pqt$ are missing triangles in $\D$. Hence $\lk u\cap\lk t=C(p,w,q,c)\star\p(ab)=\lk ut$, and we can contract the edge $ut$.

Let $n\geq 4$, and suppose $C=C(t,a,b,\dots,d,t)$. We prove that if there is a vertex, say $z$, in $C$ that is not incident to a missing edge $e$ in $\lk u$ that is present in $\D$, then the pair $(u,z)$ satisfies the link condition.

Let $z$ be a vertex in $C$ that is not incident to $e$. If $z$ is not adjacent to $w$, then $\lk u\cap\lk z=\lk uz=\p(pqc)\star\p(xy)$, where $x$ and $y$ are the vertices in $C$ adjacent to $z$. Now, assume that $zw$ is an edge in $\D$. Let $\t^2=abc$, where $c$ is the central vertex of $\t^2$. Without loss of generality, let $z=a$. Then $\lk u\cap\lk z=\lk uz= \p(pqct)\#\p(pqcw)\#\p(pqwb)$ (since $wt,bt,pqc,pqw\notin\lk u$). Let $\t^2=pqt$, where $t$ is the central vertex of $\t^2$. If $t$ is not incident to any missing edge $e$ of $\lk u$ that is present in $\D$, then $pqt$ is a missing triangle in $\D$. Therefore, $\lk u\cap\lk t=C(c,p,w,q)\star\p(ad)=\lk ut$ (since $wc,ad\notin\lk u$, and $pq\notin\lk t$). On the other hand, if $a$ is not incident to $e$, then $\lk a\cap\lk u=\lk au=\p(bpqc)\#\p(pqct)\#\p(pqtw)$ (since $pqt, pqc\notin\lk u$). The situations for all the remaining vertices are trivial. Thus, in any situation, the pair $(u,z)$ satisfies the link condition.

Suppose every vertex of $C$ is incident to at least one missing edge in $\lk u$ that is present in $\D$. Then, $n=4$, $d(x)\geq 7$, and hence $g_2(\lk x)\geq 2$  for every vertex $x\in V(C)\cup V(pqc)$. However, $g_2(\lk x)\geq 2$ for every vertex $x\in V(C)\cup V(p,q,c)$ together with $g_3(\D)=0$ contradicts Lemma \ref{g-relations}. Hence, there is a vertex $z$ in $C$, which is not incident to a missing edge $e$ in $\lk u$ that is present in $\D$. 
 
 Thus, in every situation, there is a vertex, say $x$, in $\D$ such that the pair $(u, x)$ satisfies the link condition. Let $\D'$ be the resulting complex obtained after contracting the edge $ux$. Then $g_2(\D')\leq 3$, and $\D$ is obtained from $\D'$ by an edge expansion. This completes the proof.
\end{proof}
\begin{Lemma}\label{d=4, g2(lku=1), d(u)=6}
Let $\D\in\H_{4}$, and let $u$ be a vertex in $\D$ with $g_2(\lk u)=1$. Then $\D$ is obtained from a triangulated sphere with $g_2\leq 3$ by an edge expansion. 
\end{Lemma}
\begin{proof}
If $\D$ contains a vertex, say $x$, with $g_2(\lk x)=2$, then the result follows from Lemmas \ref{d=4, g2(lku=2), d(u)>=8, no missing edge} and \ref{d=4, g2(lku=2), d(u)>=8, one missing edge}. Now, assume that $g_2(\lk v)=1$ for every vertex $v\in\D$. 

Since $g_2(\lk u)=1$, it follows from Lemma \ref{d=4, g2(lku)=1, d(u)>=8} that $d(u)=6$. Let $\lk u=\p(abt)\star\p(pqc)$, where $abt$ and $pqc$ are two 2-simplices. Suppose $pqc$ is a face of $\D$. Then, $V(\lk pqc)$ is a subset of $V(\D\setminus\st u)$. Therefore, the degree of each vertex of $pqc$ in $\D$ is at least 9, which contradicts the fact of Lemma \ref{d=4, g2(lku)=1, d(u)>=8}. Hence, $pqc$ is a missing triangle in $\D$. Similarly, $abt$ is a missing triangle in $\D$. Therefore, the result follows from Lemma \ref{missing pqc}.
\end{proof}

\begin{Theorem}\label{g2=4. prime theorem}
Let $\D$ be a prime homology $4$-manifold with $g_2(\D)=4$. Then $\D$ is obtained from a triangulated $4$-sphere with $g_2\leq 3$ by one of the following operations:
\begin{enumerate}[$(i)$]
\item a bistellar $1$-move and an edge contraction,
\item an edge expansion.
\end{enumerate} 
\end{Theorem}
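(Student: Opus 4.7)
The plan is to split on whether $\Delta$ admits a vertex with non-prime link. If such a vertex exists, Lemma \ref{non prime vertex links} directly produces case $(i)$: $\Delta$ arises from a homology $4$-manifold $\Delta'$ with $g_2(\Delta') = 3$ via a bistellar $1$-move and an edge contraction. One then promotes $\Delta'$ to a triangulated $4$-sphere using Theorem \ref{prime theorem for g2=3} applied to each prime summand, together with Proposition \ref{NovikSwartz2020} (handle additions are excluded because they would raise $g_2$ by $15$).

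Otherwise every vertex link is prime, so $\Delta \in \H_4$. I would then apply Lemma \ref{g2<3 exists} to find a vertex $u$ with $g_2(\lk u) \leq 2$, and sharpen this via Lemma \ref{> 1 for prime u} (since $\lk u$ is prime and $g_2(\Delta) \geq 1$) to $g_2(\lk u) \in \{1, 2\}$. Before appealing to the structure of $\lk u$, Lemmas \ref{missing pqc} and \ref{missing edge} dispatch the situations where property $(*)$ fails at $u$, so we may assume $(*)$ holds.

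If $g_2(\lk u) = 1$, Proposition \ref{Nevo g2=1} structures $\lk u$ as $C \star \partial(pqc)$ with $|C| \geq 3$, and the cases $d(u) \geq 7$ and $d(u) = 6$ are closed by Remark \ref{Remark g2=1} and Lemma \ref{d=4, g2(lku=1), d(u)=6} respectively. If $g_2(\lk u) = 2$, Proposition \ref{Zheng g2=2} describes $\lk u$: if $\lk u$ is octahedral, invoke Lemma \ref{octahedral 4-sphere}; otherwise use Lemma \ref{d=4, g2(lku=2), d(u)>=8, no missing edge} when $G(\Delta[V(\lk u)]) = G(\lk u)$, and Lemma \ref{d=4, g2(lku=2), d(u)>=8, one missing edge} when $\lk u$ carries at least one missing edge that is present in $\Delta$. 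In every subcase the intermediate complex has $g_2 \leq 3$, and is a triangulated $4$-sphere by Theorem \ref{prime theorem for g2=3} combined with Proposition \ref{NovikSwartz2020}.

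The main obstacle is bookkeeping rather than any genuinely new idea: one must verify that the case distinction on the vertex $u$ supplied by Lemma \ref{g2<3 exists} is truly exhaustive, and that the hypotheses of each auxiliary lemma (primeness of the relevant links, the degree estimates, and property $(*)$) are in force before it is invoked. All the substantive work has already been packaged into the preceding lemmas, so the proof reduces to this systematic verification.
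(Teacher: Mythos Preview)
Your proposal is correct and follows essentially the same route as the paper: split on whether some vertex link is non-prime (invoking Lemma~\ref{non prime vertex links} if so), otherwise place $\Delta$ in $\H_4$, locate a vertex $u$ with $g_2(\lk u)\in\{1,2\}$ via Lemma~\ref{g2<3 exists}, and exhaust the possibilities using Lemmas~\ref{missing pqc}, \ref{missing edge}, \ref{octahedral 4-sphere}, Remark~\ref{Remark g2=1}, and Lemmas~\ref{d=4, g2(lku=2), d(u)>=8, no missing edge}--\ref{d=4, g2(lku=1), d(u)=6}. Your write-up is in fact more explicit than the paper's (which simply cites ``all the discussed lemmas in Section~4''); the only cosmetic point is that property~$(*)$ and Lemmas~\ref{missing pqc}--\ref{missing edge} pertain specifically to the $g_2(\lk u)=1$ situation, so that reduction belongs inside that branch rather than before the $g_2(\lk u)\in\{1,2\}$ split.
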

\begin{proof}
If $\D\notin\H_4$, then the result follows from Lemmas \ref{missing pqc},  \ref{missing edge}, and \ref{non prime vertex links}. Now, assume that $\D\in \H_4$. It follows from Lemma \ref{g2<3 exists} that, there exist a vertex, say $v$, in $\D$ such that $g_2(\lk v)\leq 2$. 

Let $u$ be a vertex in $\D$ with $g_2(\lk u)=2$. If $\lk u$ is an octahedral 3-sphere, then from Lemma \ref{octahedral 4-sphere} we get that $\D$ is obtained from a triangulated sphere with $g_2\leq 3$ by an edge expansion. However, if $\lk u$ is not an octahedral 3-sphere, then we can obtain the conclusion from Lemmas \ref{d=4, g2(lku=2), d(u)>=8, no missing edge} and \ref{d=4, g2(lku=2), d(u)>=8, one missing edge}. Likewise, if $\D$ contains a vertex $u$ with $g_2(\lk u)=1$, then from Lemma \ref{d=4, g2(lku)=1, d(u)>=8} we have $d(u)=6$, and hence the conclusion can be drawn from Lemma \ref{d=4, g2(lku=1), d(u)=6}.
\end{proof}

\section{4-dimensional Homology Manifolds with $g_2=5$}
\begin{Lemma}\label{g2=5, g2(lku=1),d(u)=9}
Let $\D\in\H_{5}$. If $u$ is a vertex in $\D$ with $g_2(\lk u)=1$, then $d( u)\leq 8$.
\end{Lemma}
\begin{proof}
If possible, let $d(u)\geq 9$. Since $g_2(\lk u)=1$, we have $\lk u=C\star\p(pqc)$, where $pqc$ is a 2-simplex, and $C$ is a cycle of length $n\geq 6$. Here, $pqc\in\D$, and each vertex of $C$ is incident to at least one diagonal edge in $\D[V(C)]$.

Suppose there are exactly three missing edges, $e_1$, $e_2$, and $e_3$, in $\lk u$, present in $\D$. Then $n=6$, and there are only two possible positions for the edges $e_1$, $e_2$, and $e_3$, as shown in Figure \ref{fig:3}. Note that no vertex of $C$ lies in $\lk pqc$, as it would make the link of that vertex not prime. Therefore, the triangle $pqc$ can never be in the link of the edges $e_1$, $e_2$, and $e_3$.

In the case of Figure \ref{fig:3} $(i)$, the link of the edge $sd$ contains no vertex of $C$, so $\lk sd$ contains at least two vertices from $\D\setminus\st u$. Furthermore, links of the other two diagonal edges in Figure \ref{fig:3} $(i)$ contain at most one vertex from $C$. Since $pqc\notin \lk e_i$, links of those two edges must contain at least one vertex from $\D\setminus\st u$. Similarly, the link of every diagonal edge in Figure \ref{fig:3}$(ii)$ contains at least two vertices from $\D\setminus\st u$.
\begin{figure}[ht]
\tikzstyle{ver}=[]
\tikzstyle{vertex}=[circle, draw, fill=black!100, inner sep=0pt, minimum width=4pt]
\tikzstyle{edge} = [draw,thick,-]
\centering
\begin{tikzpicture}[scale=0.75]
\begin{scope}[shift={(0,0)}]
\foreach \x/\y/\z in {1.6/-.5/d,0.3/.35/t,0.3/1.6/r,2.9/1.6/a,2.9/.35/b,1.6/2.45/s, 4/-0.3/q,7/-0.3/c,5.5/2.3/p}{\node[vertex] (\z) at (\x,\y){};}
\foreach \x/\y/\z in {1.6/-.9/d,1.6/2.9/s,0/.35/t,0/1.5/r,3.3/1.5/a,3.2/.35/b, 3.7/-.55/q,7.25/-.5/c,5.5/2.8/p,3/-1.8/(i)}{
\node[ver] () at (\x,\y){$\z$};}

\foreach \x/\y in {a/b,d/b,a/s,s/r,r/t,t/d,p/q,p/c,c/q}{\path[edge] (\x) -- (\y);}
\foreach \x/\y in {t/b,s/d,a/r}{\draw[densely dotted] (\x) -- (\y);}


\end{scope}
\begin{scope}[shift={(9.5,0)}]
\foreach \x/\y/\z in {1.6/-.5/d,0.3/.35/t,0.3/1.6/r,2.9/1.6/a,2.9/.35/b,1.6/2.45/s, 4/-0.3/q,7/-0.3/c,5.5/2.3/p}{\node[vertex] (\z) at (\x,\y){};}
\foreach \x/\y/\z in {1.6/-.9/d,1.6/2.9/s,0/.35/t,0/1.5/r,3.3/1.5/a,3.2/.35/b, 3.7/-.55/q,7.25/-.5/c,5.5/2.8/p,3/-1.8/(ii)}{
\node[ver] () at (\x,\y){$\z$};}

\foreach \x/\y in {a/b,d/b,a/s,s/r,r/t,t/d,p/q,p/c,c/q}{\path[edge] (\x) -- (\y);}
\foreach \x/\y in {r/b,s/d,a/t}{\draw[densely dotted] (\x) -- (\y);}


\end{scope}
\end{tikzpicture}
\caption{} \label{fig:3}
\end{figure}  

Thus, the number of edges between the vertices in $V(C)$ and $V(\D\setminus\st u)$ is at least 8. Since no vertices of $C$ lie in $\lk pqc$, we have $V(\lk pqc)\subseteq V(\D\setminus\st u)$. Therefore, the number of edges between $V(\D\setminus\st u)$ and $V(pqc)$ is at least 9. By Proposition \ref{complete graph}, $G(\D\setminus\st u)$ is a complete graph. Let $f_0(\D\setminus\st u)=m$. Then the total number of edges in $\D$ is at least ${m\choose 2}+8+9+f_1(\st u)+|\{e_1,e_2,e_3\}|={m\choose 2}+56$. Therefore, $g_2(\D)\geq \frac{m(m-1)}{2}+56-5(m+10)+15$, which simplifies to $m^{2}-11m+32\leq 0$. However, there is no positive integer $m$ for which the last inequality is satisfied. Therefore, the situations in Figure \ref{fig:3} are not possible. 

If $\lk u$ contains four missing edges that are present in $\D$, then $V(\D)=V(\st u)$. Consequently, $pqc$ is a missing triangle in $\D$, contradicting the fact that $\D\in \H_{5}$. Therefore, the result follows.
\end{proof}
\begin{Lemma}\label{g2=5, g2(lku=1),d(u)=8}
Let $\D\in\H_{5}$, and let $u$ be a vertex in $\D$ with $g_2(\lk u)=1$. If $d(u)=8$, then $\D$ is obtained from a triangulated $4$-sphere with $g_2\leq 4$ by an edge expansion, together with flipping an edge or bistellar $2$-moves (once or twice).
\end{Lemma}
\begin{proof}
Since $g_2(\lk u)=1$ and $d(u)=8$, we have $\lk u=C\star\p(pqc)$, where $pqc$ is a 2-simplex, and $C$ is a cycle of length $5$. Here, $pqc\in\D$, and each vertex of $C$ is incident to at least one diagonal edge in $\D[V(C)]$. Thus, the number of missing edges in $\lk u$ that are present in $\D$ is at least three.

\begin{figure}[ht]
\tikzstyle{ver}=[]
\tikzstyle{vertex}=[circle, draw, fill=black!100, inner sep=0pt, minimum width=4pt]
\tikzstyle{edge} = [draw,thick,-]
\centering
\begin{tikzpicture}[scale=0.75]
\begin{scope}[shift={(0,0)}]
\foreach \x/\y/\z in {0.3/0/t,0.3/1.6/r,2.9/1.6/a,2.9/0/b,1.6/2.4/s, 4/0/q,7/0/c,5.5/2.3/p}{\node[vertex] (\z) at (\x,\y){};}
\foreach \x/\y/\z in {1.6/2.9/s,0/-.3/t,0/1.5/r,3.3/1.5/a,3.2/-.3/b, 3.8/-.3/q,7.2/-.3/c,5.5/2.8/p}{
\node[ver] () at (\x,\y){$\z$};}

\foreach \x/\y in {a/b,a/s,b/t,s/r,r/t,p/q,p/c,c/q}{\path[edge] (\x) -- (\y);}
\foreach \x/\y in {s/b,a/r,a/t}{\draw[densely dotted] (\x) -- (\y);}


\end{scope}
%
%
%
\end{tikzpicture}
\caption{} \label{fig:4}
\end{figure}

Suppose there are exactly three missing edges, denoted as $e_1, e_2,$ and $e_3$, in $\lk u$ that are present in $\D$. The only possible arrangement for these edges in $C$ is illustrated in Figure \ref{fig:4} with dotted lines. Let us consider the vertex set of the cycle $C$ as $\{a, b, t, r, s\}$.
Note that $pqc\notin\lk e_i$ for every $i$. Therefore, $\lk sb$ contains at least one vertex from $\D\setminus\st u$. If $\lk ar$ does not contain any vertex from $\D\setminus\st u$, then $\lk ar=\p(st)\star\p(pqc)$ (since $st\notin\D$ and $pqc\notin\lk a$). Let us apply the operation of flipping the edge $ar$ with $st$ (replace $ar\star\p(st)\star\p(pqc)$ with $\p(ar)\star st\star\p(pqc)$) in $\D$ and denote the resulting complex by $\D'$. Then $\D'$ is a homology manifold with $g_2=5$, and $\lk u=lk_{\D'}u$. Although $lk_{\D'}u$ contains three missing edges that are present in $\D'$, the vertex $r\in C$ is not incident to any of the missing edges in $\D'$. Hence, $lk_{\D'}u\cap lk_{\D'}r=lk_{\D'}ur=\p(pqc)\star\p(st)$, and we can contract the edge $ur$. 
Let $\D''$ be the resulting complex. Then $g_2(\D'')=4$, and $\D'$ is obtained from $\D''$ by an edge expansion. Furthermore, $\D$ is obtained from $\D'$ by flipping an edge. Similarly, if $\lk at$ does not contain any vertex from $\D\setminus\st u$, we can perform the operation of flipping the edge $at$ with $br$ in $\D$. This ensures that the pair $(a,t)$ satisfies the link condition in this case. Therefore, $\D$ is obtained by flipping an edge together with an edge expansion from a triangulated 4-sphere with $g_2=4$.

It remains to consider the case where $\lk e_i$ contains at least one vertex from $\D\setminus\st u$ for each $i$. In this scenario, $d(x)\geq 8$ for every vertex $x\in \lk u$, and specifically, $d(x)\geq 9$ for each vertex $x$ in the set $\{a,p,q,c\}$.

Let the number of vertices in $\D\setminus\st u$ be $m$. By Proposition \ref{complete graph}, $G(\D\setminus\st u)$ forms a complete graph. Given that $\lk e_i$ contains at least one vertex from $\D\setminus\st u$, the number of edges between vertices of $\D\setminus\st u$ and vertices of $C$ is at least 5. Since $V(\lk pqc)\subseteq V(\D\setminus\st u)$, the number of edges between the vertices of $pqc$ and the vertices in $\D\setminus\st u$ is at least 9, with equality occurring when $\lk pqc$ is a 3-cycle. Therefore, $f_1(\D)\geq {m\choose 2}+5+9+f_1(\st u)+|\{e_1,e_2,e_3\}|={m\choose 2} +48$. Additionally, if $V(\D)\setminus V(\st u\cup\lk pqc)\neq\emptyset$, then there will be more edges, as we will see when necessary.

We first claim that $\lk pqc$ is a 3-cycle. If $\lk pqc$ is a cycle of length greater than 3, then $f_1(\D)\geq {m\choose 2} +48+3k$ for some natural number $k$. However, $g_2(\D)=5$ implies that $5\geq {m\choose 2} +48+3k-5(9+m)+15$. Simplifying this equation yields $m^2 - 11m + 26 + 6k\leq 0$, where $k\geq 1$. However, there is no natural number $m$ satisfying this inequality. Therefore, $\lk pqc$ is indeed a 3-cycle. Let $\lk pqc=\p(xyz)$ for some vertices $x, y,$ and $z$ in  $\D\setminus\st u$.

We calculate the possible values of $m$.  Note that $g_2(\D)=5$, and $f_1(\D)\geq {m\choose 2} + 48$, which implies $m^2 - 11m + 26 \leq 0$, where $m\geq 3$. The only positive integers satisfying these inequalities are $m=4, 5, 6$, and $7$. 

Let $m=7$. Since $G(\D\setminus\st u)$ is a complete graph and $\lk pqc=\p(xyz)$, the degree of each of the vertices $x, y$, and $z$ is at least 9. Thus, $\D$ contains at least seven vertices with a degree of at least 9.
Now, with $f_0(\D)=16$ and the relation $\sum_{v\in\D}g_2(\lk v)\leq 20$, there is at least one vertex, denoted as $x$, with $d(x)\geq 9$ and $g_2(\lk x)=1$. This contradicts the fact of Lemma \ref{g2=5, g2(lku=1),d(u)=9}. Hence $m\leq 6$.

Let $m\leq 6$. If $xyz\notin\D$, then we apply a bistellar 2-move with respect to the pair $(pqc,xyz)$. Let $\D'$ be the resulting complex. According to Lemma \ref{missing pqc}, $\D'$ is obtained by an edge expansion from a triangulated sphere with $g_2=2$, and $\D$ is obtained from $\D'$ by a bistellar 2-move. If $xyz\in\D$, then $\lk xyz$ forms a cycle. Furthermore, $\lk xyz$ contains no vertices from $\{p,q,c\}$. If $\lk xyz$ contains a vertex from $C$, then there are at least 9 edges between the vertices of $C$ and $V(\D\setminus\st u)$, which implies $f_1(\D)\geq {m\choose 2}+11+9+f_1(\st u)+|\{e_1,e_2,e_3\}|={m\choose 2}+52$. However, this contradicts $g_2=5$. Thus, $\lk xyz$ is a 3-cycle, denoted as $\p(x_1x_2x_3)$, where $\{x_1,x_2,x_3\}\subseteq V(\D\setminus\st u)$. Clearly, $x_1x_2x_3$ is a missing triangle in $\D$, as otherwise, $\lk x_1x_2x_3$ would be a cycle containing vertices from $\lk u$, contradicting $g_2(\D)=5$. Now, we apply bistellar 2-moves with respect to pairs $(x_1x_2x_3,xyz)$ and $(pqc,xyz)$. Let $\D''$ be the resulting complex. Using similar arguments, $\D$ is obtained by two repeated bistellar 2-moves and an edge expansion from a triangulated sphere with $g_2\leq 2$.
 
If $\lk u$ contains four missing edges present in $\D$, then $V(\D)=V(\st u)$. This implies that $pqc$ is a missing triangle in $\D$, which leads to a contradiction.  This completes the proof.
\end{proof}
\begin{Lemma}\label{g2=5, g2(lku=1),d(u)=7}
Let $\D\in\H_{5}$, and let $u$ be a vertex in $\D$ with $g_2(\lk u)=1$. If $d(u)=7$, then $\D$ is obtained from a triangulated sphere with $g_2\leq 4$ by an edge expansion together with flipping an edge (possibly zero times) or a bistellar $2$-move (possibly zero times).
\end{Lemma}
\begin{proof}
Let $\lk u=C(a,b,d,t)\star\p(pqc)$, where $pqc$ is a 2-simplex. Here, $pqc\in\D$, and every vertex of $C$ is incident to at least one diagonal edge in $\D[V(C)]$. Thus, $\D$ has exactly two diagonal edges with endpoints in $C$. Let $G(\D[V(\lk u)])=G(\lk u)\cup\{e_1,e_2\}$, where $e_1=ad$ and $e_2=bt$. Note that $V(\lk pqc)\subseteq V(\D\setminus \st u)$. By a similar argument as in Lemma \ref{d=4, g2(lku)=1, d(u)>=8}, we find that $\lk ad$ and $\lk bt$ each contain at least one vertex from $V(\D\setminus\st u)$. Hence, $d(x)\geq 8$ for every vertex $x\in \lk u$. Thus, $\D$ has at least seven vertices (the vertices of $\lk u$) with a degree of at least 8. If any of these vertex-links has $g_2=1$, the result follows from Lemma \ref{g2=5, g2(lku=1),d(u)=8}. For the remaining case, where $g_2(\lk x)\geq 2$ for every vertex $x\in\lk u$, using the relations $\sum_{v\in\D} g_2(\lk v)=3g_3(\D)+4g_2(\D)$ and $g_3(\D)=0$, we get $f_0(\D\setminus\st u)\leq 5$.

Let $\lk pqc$ be a 3-cycle, denoted as $\p(\tau)$, where $\tau$ is a 2-simplex. If $\tau\in\D$, then $\lk \tau$ is a cycle. Since no vertex from $\{p,q,c\}$ is in $\lk \tau$, the degree of each vertex in $\tau$ is at least 8. It follows that $\D$ contains at least one vertex, say $x$, with $d(x)\geq 8$ and $g_2(\lk x)=1$,  which leads to the conclusion as in Lemma \ref{g2=5, g2(lku=1),d(u)=8}. Now, assume that $\tau\notin\D$, and replace the 4-dimensional ball $pqc\star\p(\tau)$ with $\tau\star\p(pqc)$. In the resulting complex $\D'$, $lk_{\D'} u$ is the same as $\lk u$, but $pqc$ is a missing triangle here. Therefore, by Lemma \ref{missing pqc}, $\D$ is obtained from an edge expansion on a triangulated sphere with $g_2=3$ by a bistellar 2-move.

Now we assume that $\lk pqc$ is a cycle of length greater than 3. Let $f_0(\D\setminus\st u)=4$ and $\lk pqc=C(x_1,x_2,x_3,x_4)$.  We claim that there is at least one vertex, say $x_i$, with $d(x_i)=6$. 

If possible, let $d(x_i)\geq 7$ for $1\leq i \leq 4$. Without loss of generality, let $x_1\in\lk ad$. If $x_1x_3\in \D$, then $d(x_1)\geq 8$, and we have $f_1(\D)\geq {8\choose 2} + 3\cdot 7+8 - {4\choose 2} = 51$. If $x_1x_3\notin \D$, then $f_1(\D)\geq {8\choose 2} + 4\cdot 7 - ({4\choose 2}-1) = 51$.  This contradicts $g_2(\D)=5$. Thus, there is a vertex in $\lk pqc$ whose link contains exactly six vertices. 

Without loss of generality, let $x_2$ be a vertex with $d(x_2)=6$. Then $g_2(\lk x_2)=1$ and $x_2$ is adjacent to at least two vertices in the cycle $\lk pqc$. Let $\lk x_2=\p(\tau_1)\star\p(\tau_2)$, where $\tau_1$ and $\tau_2$ are $2$-simplices. If both 2-simplices are present in $\D$, then the link of each vertex in $\lk pqc$ adjacent to $x_2$ has a degree of at least 8. Hence, $\D$ has at least one vertex, say $x$, with $g_2(\lk x) = 1$ and $d(x) = 8$, leading to the conclusions as in Lemma \ref{g2=5, g2(lku=1),d(u)=8}. If either $\tau_1$ or $\tau_2$ is a missing simplex in $\D$, then the link condition is satisfied for a pair $(u,x)$, where $x$ is any vertex in that triangle. So, $\D$ is obtained by an edge expansion from a triangulated sphere with $g_2=4$. 

 Let $f_0(\D\setminus\st u)=5$, $V(\D\setminus\st u)=\{x_1,x_2,x_3,x_4,x_5\}$, and $\lk pqc=C(x_1,x_2,x_3,x_4)$.  In this case, $g_2(\lk x_j)=1$ for each $j$. We claim that there exists an $i$ such that $d(x_i)$ is either $6$ or at least 8. Suppose, for the sake of contradiction, that $d(x_i)=7$ for $1\leq j \leq 5$. Since $g_2(\D)=5$, at most two edges are missing, and their endpoints are in $\{x_1,x_2,x_3,x_4,x_5\}$.  Let $x_1 \in \lk ad$ and $x_k \in \lk bt$. If $x_1 = x_k$, then $d(x_1) \geq 9$, leading to a contradiction with Lemma \ref{g2=5, g2(lku=1),d(u)=9}. Now, consider the case where $k \in \{2, 3, 4\}$. In this scenario, $x_1x_3$, $x_1x_5$, and $x_kx_5$ are missing edges in $\D$, leading to a contradiction. Therefore, $x_5 \in \lk bt$. Since $x_1x_3$ and $x_1x_5$ are missing edges, all the remaining vertices are adjacent to each other. Given that $d(x_i) = 7$ for $1 \leq j \leq 5$, it follows that $x_2$ and $x_4$ are not adjacent to any vertices in $\{a, b, d, t\}$, $x_3$ is adjacent to exactly one vertex in $\{a, b, d, t\}$, and $x_5$ is adjacent to two vertices in $\{a, d, p, q, c\}$. Thus, we cannot have five vertices in both the links $\lk bx_5$ and $\lk tx_5$, which leads to a contradiction.  Consequently, neither $\lk ad$ nor $\lk bt$ contains any vertices from the set $\{x_1, x_2, x_3, x_4\}$, while $x_5$ is present in both $\lk ad$ and $\lk bt$. Since $\lk bx_5$ contains at least five vertices, we must have at least two edges either from $b$ to a vertex in the set  $\{x_1, x_2, x_3, x_4\}$ or from $x_5$ to a vertex in the set $\{p, q, c\}$. However, in both of these cases, we would end up with more than two missing edges, with both endpoints in the set $\{x_1, x_2, x_3, x_4, x_5\}$, leading to a contradiction. Therefore, the claim is established, and the result follows from similar arguments as in the previous paragraph.

 Let $f_0(\D\setminus\st u)=5$ and $\lk pqc=C(x_1,x_2,x_3,x_4, x_5)$.  In this case, $g_2(\lk x_j)=1$ for each $j$. We claim that there exists an $i$ such that $d(x_i)$ is either $6$ or at least 8. If $d(x_i)=7$ for $1\leq j \leq 5$, then at most two edges are missing, and their endpoints are in $\{x_1,x_2,x_3,x_4,x_5\}$. Let $x_1\in\lk ad$ and $x_k\in\lk bt$. If $x_1=x_k$, then $d(x_1)\geq 9$, leading to a contradiction. Now, let $k\in \{2,3,4,5\}$. At least three edges  from the set $\{x_1x_3,x_1x_4$, $x_kx_{k-2},x_kx_{k+2}\}$ are missing edges in $\D$, which again results in a contradiction. Therefore, the claim is established. Now the result follows from arguments similar to those in the fifth paragraph of this lemma. 
\end{proof}

\begin{Lemma}\label{g2=5, g2(lku=2), d(u)>=8, no missing edge}
Let $\D$ be in $\H_{5}$, and let $u$ be a vertex in $\D$ with $g_2(\lk u)=2$. If $d(u)\geq 8$ and $\lk u$ contains at most one missing edge that is present in $\D$, then $\D$ is obtained from a triangulated sphere with $g_2\leq 4$ through an edge expansion.
\end{Lemma}
\begin{proof} 
On one hand, if $\lk u$ is an octahedral 3-sphere, then the result follows from Lemma \ref{octahedral 4-sphere}. On the other hand, if $G(\D[V(\lk u)])=G(\lk u)$, then the result follows from arguments similar to those in Lemma \ref{d=4, g2(lku=2), d(u)>=8, no missing edge}. Now, suppose $\lk u$ is not an octahedral 3-sphere, and $G(\D[V(\lk u)])=G(\lk u)\cup \{e\}$. Let $K, K', \t^2$, and $w$ be as described in Lemma \ref{d=4, g2(lku=2), d(u)>=8, no missing edge}.

Suppose $K$ is not prime, and let $lk_{K} \t^2=\{r,s\}$, where $s$ is the vertex added to $K'$ in the last facet subdivision. If $e$ is not incident to $w$, then $\lk u\cap\lk w=\p(\t^2)\star\p(rs)=\lk uw$. On the other hand, if $e$ is not incident to $s$, then $\lk u\cap\lk s=\p(\t^2)\cap\p(wx)=\lk us$, where $lk_{K} s=\p(x\star\t^2)$. Thus, in both cases, we can contract an edge. Let $\D'$ be the resulting complex after contracting the edge. Then $g_2(\D')=4$, and $\D$ is obtained from $\D'$ by an edge expansion.

If $K$ is prime, then $K=K'=C\star\p(pqc)$, where $C$ is a cycle of length $n\geq 4$. Let $y$ be a vertex in $C$ that is not incident to $e$. By the same argument as in Lemma \ref{d=4, g2(lku=2), d(u)>=8, one missing edge}, the pair $(u,y)$ satisfies the link condition, and we are done.     
\end{proof}
\begin{Lemma}\label{g2=5, g2(lku=2), two missing edge}
Let $\D\in\H_{5}$, and let $u$ be a vertex in $\D$ with $g_2(\lk u)=2$. If $d(u)\geq 8$ and $\lk u$ contains exactly two missing edges that are present in $\D$, then $\D$ is obtained from a triangulated sphere with $g_2\leq 4$ by an edge expansion together with flipping an edge (possibly zero times) or a bistellar $2$-move (possibly zero times).
\end{Lemma}
\begin{proof}
If $\lk u$ is an octahedral 3-sphere, then the result follows from Lemma \ref{octahedral 4-sphere}. Now, assume that $\lk u$ is not an octahedral 3-sphere, and let $G(\D[V(\lk u)])=G(\lk u)\cup \{e_1,e_2\}$. Let $K,K',\t^2$, and $w$ be as in Lemma \ref{d=4, g2(lku=2), d(u)>=8, no missing edge}.
\vst
\noindent\textbf{Case 1:} Let $K$ not be prime. Let $s_1,\dots,s_m$ be the vertices added to $K'$ during the subdivision of tetrahedra to form $K$. By applying similar arguments as in Lemmas \ref{d=4, g2(lku=2), d(u)>=8, no missing edge} and \ref{d=4, g2(lku=2), d(u)>=8, one missing edge}, we can show that if at least one vertex in the set $\{w,s_1,\dots,s_m\}$ is not incident to some $e_i$, then $\D$ is obtained by an edge expansion from a triangulated sphere with $g_2\leq 4$. For the remaining part of the proof, we assume that each vertex in the set $\{w,s_1,\dots,s_m\}$ is incident to at least one edge in $\{e_1,e_2\}$. Therefore, $m\leq 3$.

Let $K'=\p(abt)\star\p(pqc)$, where $abt$ is a 2-simplex. Without loss of generality, let $\t^2=pqt$. Clearly, $lk_{K'} (pqt)=\{a,b\}$. Suppose $m=1$, and let $pqta$ be the missing tetrahedron in $K$ such that $lk_{K}s_1=\p(pqta)$. Thus, the degree of each vertex of $\lk u$ is at least 7. If $g_2(\lk x)=1$ for a vertex $x$ in $\lk u$, then the result follows from Lemmas \ref{g2=5, g2(lku=1),d(u)=9}, \ref{g2=5, g2(lku=1),d(u)=8}, and \ref{g2=5, g2(lku=1),d(u)=7}. Let $g_2(\lk x)\geq 2$ for every vertex in $\lk u$. Then, the relation $\sum_{v\in\D}g_2(\lk v)=4g_2(\D)$ implies that $\sum_{v\in\D\setminus\st u}g_2(\lk v)\leq 2$. Therefore, $\D\setminus\st u$ contains at most two vertices. 

In $\lk u$, the missing edges are $wc,wa,s_1b,$ and $s_1c$. If $abt$ is a missing triangle in $\D$, then $\lk u\cap\lk b=\p(wpqt)\#\p(pqtc)\#\p(pqca)=\lk ub$, since $aw,wc,pqc,pqt\notin\lk u$, and $at\notin\lk b$. On the other hand, if $pqc\notin \D$, then $\lk pu=(C(a,q,w,t)\star\p(bs_1))\#_{abt}\p(abtc)$. We claim that $\lk u\cap\lk p=\lk pu$. Notice that $abt\notin\lk u$ and $qc\notin\lk p$. Therefore, to establish the claim, it is sufficient to prove that $pqt\notin\D$. If $pqt\in\D$, then $\lk pqt$ is a cycle, which does not contain $c$ (since $pqc\notin\D$). Furthermore, no vertex from the set $\{a,b,s_1,w\}$ is in $\lk pqt$; otherwise $\p(upqt)$ would be a missing tetrahedron in the corresponding vertex link. Thus,  $V(\lk pqt)\subseteq V(\D\setminus\st u)$, which is not possible. Therefore, $pqt\notin\D$, and we conclude that $\lk u\cap\lk p=\lk pu=(C(a,q,w,t)\star\p(bs_1))\#_{abt}\p(abtc)$. 

Let us assume that both $abt$ and $pqc$ are faces of $\D$. Suppose both $wc$ and $s_1c$ are present in $\D$, i.e., $\{e_1,e_2\}=\{wc,s_1c\}$. Then $abt$ is a missing triangle in $\D$; because if $abt\in\D$, then $\lk abt\subseteq\D\setminus \st u$, and hence $\D\setminus \st u$ contains at least three vertices, which is a contradiction. If at least one of $wc$ and $s_1c$ is a missing edge in $\D$, then one of the links $\lk abt$ and $\lk pqc$ contains at least two vertices, and the other contains at least one vertex from $V(\D\setminus\st u)$. Thus,  $V(\D\setminus\st u)$ contains exactly two vertices, say $x_1$ and $x_2$, and $g_2(\lk x_i)=1$ for each $i$. By Proposition \ref{complete graph}, the edge $x_1x_2\in\D$. Therefore, at least one of $x_1$ and $x_2$ is adjacent to more than six vertices, i.e., there is a vertex $x\in\D$ with $g_2(\lk x)=1$ and $d(x)\geq 7$. Hence the result follows from Lemmas \ref{g2=5, g2(lku=1),d(u)=9}, \ref{g2=5, g2(lku=1),d(u)=8}, and \ref{g2=5, g2(lku=1),d(u)=7}.

 Let $m\geq 2$. If the pair $(u,c)$ satisfies the link condition, then we are done. Suppose that $\lk u\cap\lk c\neq\p(abt)\star\p(pq)=\lk uc$. Then $c$ must be incident to at least one more vertex other than the vertices in $\{a,b,t,p,q,u\}$. Therefore, the degree of each vertex in $\lk u$ is at least 7. If $g_2(\lk x)\geq 2$ for every vertex in $\lk u$, then $f_0(\D)\leq 10$. Thus,  there is no vertex in $\D\setminus\st u$, which contradicts $g_2(\D)=5$. Therefore, $g_2(\lk x)=1$ for a vertex $x\in\lk u$, and hence the result.
 
 Let $C$ be a cycle of length $n\geq 4$. Since $\lk u$ has at most two missing edges that are present in $\D$, and $\{w,s_1,\dots,s_m\}\subseteq V(\{e_1,e_2\})$, there is at least one vertex in the cycle $C$ that is not incident to the edges $e_i$, $1\leq i\leq 2$. Hence, the result follows from similar arguments as in Lemma \ref{d=4, g2(lku=2), d(u)>=8, one missing edge}.
 
\vst
\noindent\textbf{Case 2:} Let $K$ be prime. Then $K=C\star\p(pqc)$, where $C$ is a cycle of length $n\geq 4$. If there is a vertex $z$ in $C$ that is not incident to a missing edge $e_i$, then by the same arguments as in Lemma \ref{d=4, g2(lku=2), d(u)>=8, one missing edge}, $(u,z)$ satisfies the link condition. Now, suppose each vertex of $C$ is incident to at least one missing edge. Then $n=4$ and $w$ is not incident to the edges $e_i$, $1\leq i\leq 2$. Let $C=C(a,b,d,t)$. If the central vertex of $\t^2$ belongs to $C$, then the pair $(u,w)$ satisfies the link condition trivially. 

Let the central vertex of $\t^2$ belong to $pqc$. Without loss of generality, let $\t^2=abc$, where $c$ is the central vertex of $\t^2$. Then $\lk uw=\p(abc)\star\p(pq)$. If $wpq\notin\D$, then the pair $(u,w)$ satisfies the link condition. Let $wpq\in\D$, where $\t^2=abc$. Then $\lk wpq$ contains at least three vertices. 
Note that $ac\notin\text{lk}wpq$; otherwise, $a\in\text{lk}pqc$, and hence $\partial(upqc)\subseteq \text{lk}a$. This contradicts the fact that $\text{lk}a $ is prime. Since $wpq\notin\lk u$, it follows that $\lk wpq$ contains at least one more vertex other than $a$ and $b$. Therefore, the degree of $w$ is at least 7. Thus, $d(x)\geq 7$ for every $x\in\st u$. If $g_2(\lk x)=1$ for a vertex $x$ in $\lk u$, then the result follows from Lemmas \ref{g2=5, g2(lku=1),d(u)=9}, \ref{g2=5, g2(lku=1),d(u)=8}, and \ref{g2=5, g2(lku=1),d(u)=7}. Let $g_2(\lk x)\geq 2$ for every vertex $x\in\lk u$. Then Lemma \ref{g-relations} and $g_3(\D)=0$ imply that the number of vertices in $\D\setminus\st u$ is at most 2. 

If $pqc\notin\D$, then $\lk c\cap\lk u=\lk cu=C(a,w,b,d,t)\star\p(pq)$ holds. On the other hand, if $pqc\in\D$, then $\lk pqc$ contains at most one vertex from $\lk u$. Note that $\lk pqc$ cannot contain a vertex of $\lk u$ other than $w$. Therefore, $\D\setminus\st u$ contains exactly two vertices, and $\lk pqc=ww_1w_2$, where $\{w_1,w_2\}=V(\D\setminus\st u)$. Clearly, $g_2(\lk w_i)=1$, for each $i$. If $ww_1w_2\in\D$, then $\lk ww_1w_2$ contains at least three vertices from $C$, and hence $d(w_i)\geq 7$. If $ww_1w_2$ is a missing triangle, then we will replace the 4-dimensional ball $pqc\star\p(ww_1w_2)$ with $\p(pqc)\star ww_1w_2$. If $\D'$ is the resulting complex, then $g_2(\D')=5$. Furthermore, $\lk u=lk_{\D'}u$ and $pqc$ is a missing triangle in $lk_{\D'}u$. Thus,  $\D$ is obtained by an edge expansion and bistellar 2-move from a triangulated sphere with $g_2\leq 4$.

In both Case 1 and Case 2, whenever a link condition arises, we can apply an edge contraction. Thus, $\D$ is obtained from a triangulated sphere with $g_2\leq 4$ either by an edge expansion or by a combination of an edge expansion and a bistellar 2-move.  
\end{proof}
\begin{Lemma}\label{g2=5, g2(lku=2), three missing edge}
Let $\D\in\H_{5}$, and let $u$ be a vertex in $\D$ with $g_2(\lk u)=2$. If  $\lk u$ contains exactly three missing edges that are present in $\D$, then $\D$ is obtained from a triangulated sphere with $g_2\leq 4$ by an edge expansion.
\end{Lemma}
\begin{proof}
If $\lk u$ is an octahedral 3-sphere, then the result follows from Lemma \ref{octahedral 4-sphere}. Now, assume that $\lk u$ is not an octahedral $3$-sphere, and take $G(\D[V(\lk u)])=G(\lk u)\cup \{e_1,e_2,e_3\}$. By Proposition \ref{same vertex set}, $V(\D)=V(\st u)$. Let $K,K',\t^2$, and $w$ be as in Lemma \ref{d=4, g2(lku=2), d(u)>=8, no missing edge}.

Let $K$ not be prime, and let $s_1,\dots,s_m$ be the vertices added to $K'$ during facet subdivisions.  Furthermore, we assume that each vertex in $\{w,s_1,\dots, s_m\}$ is incident to at least one edge $e_i$; otherwise, the result follows from same arguments as in Lemmas \ref{d=4, g2(lku=2), d(u)>=8, no missing edge} and \ref{d=4, g2(lku=2), d(u)>=8, one missing edge}. Therefore $m\leq 5$. 

Suppose $C$ is a $3$-cycle, i.e., $n=3$, and consider $C=C(a,b,t)$. Without loss of generality, let $\tau^2=pqt$, and consider $pqta$ as a missing tetrahedron in $K$.

Our primary goal is to establish that $abt$ is a missing triangle in $\D$. Assuming, for the sake of argument, that $abt$ is a face of $\D$, we note that no vertex from the set $\{p, q, c\}$ can belong to $\lk abt$. Including any such vertex would contradict the fact that the link of each vertex in $\D$ is prime. Therefore, $V(\lk abt)\subseteq\{w,s_1,\dots,s_m\}$, indicating $m\geq 2$. Since $\t^2$ participates in every facet subdivision, each vertex in $\{w,s_1,\dots,s_m\}$ is incident to at most one vertex from the set $\{a,b\}$. Furthermore, $wa\notin\lk u$, and at most one $s_j$ is adjacent to $a$ (or $b$) in $\lk u$. Thus, the condition $V(\lk abt)\subseteq \{w,s_1,\dots,s_m\}$ implies that $\lk u$ has at least four missing edges that are present in $\D$. This is a contradiction. Therefore, we conclude that $abt$ is a missing triangle in $\D$.

Note that the vertex $t$ is not incident to any of the edges  $e_i$, $1\leq i\leq 3$. Considering the appropriate subdivisions and the fact that none of the vertices from the set $\{a,b,w,s_1,\dots,s_m\}$ are in $\lk pqt$,  it is clear that the pair $(u,t)$ satisfies the link condition.

Let $K'=C\star\p(pqc)$, where $C$ is a cycle of length $n\geq 4$. If there is a vertex $z$ in $C$ that is not incident to $e_j$, then by similar arguments as in Lemma \ref{d=4, g2(lku=2), d(u)>=8, one missing edge}, the pair $(u,z)$ satisfies the link condition. Now, assume that every vertex of $C$ is incident to some $e_j$. It follows that no vertex of $pqc$ is incident to $e_i$, and hence $pqc$ is a missing triangle in $\D$. Thus, there is a vertex, say $x$, in $\{p,q,c\}$ such that the pair $(u,x)$ satisfies the link condition. 

Let $K$ be prime and $K=C\star\p(pqc)$, where $C$ is a cycle of length $n$. Since the number of missing edges is 3, we have $n\geq 5$. If there is a vertex $z$ in $C$ that is not incident to any missing edge $e_i$, then by similar arguments as in Lemma \ref{d=4, g2(lku=2), d(u)>=8, one missing edge}, the pair $(u,z)$ satisfies the link condition. Let every vertex of $C$ be incident to some $e_i$. Then $n=5$. Since $p,q,$ and $c$ are adjacent to every vertex of $C$ in $\lk u$, it follows that $e_i$ is not incident $p,q$, or $c$. Therefore, $pqc$ is a missing triangle in $\D$, and there is a vertex, say $x$, in $\{p,q,c\}$ such that $(u,x)$ satisfies the link condition. Hence the proof.
\end{proof}
\begin{Lemma}\label{g2=5, g2(lku=2), d(u)=7}
Let $\D\in\H_{5}$, and let $u$ be a vertex in $\D$ with $g_2(\lk u)=2$. If $d(u)=7$, then $\D$ is obtained from a triangulated sphere with $g_2\leq 4$ by an edge expansion together with flipping an edge (possibly zero times) or a  bistellar $2$-move (possibly zero times).
\end{Lemma}
\begin{proof}
Let $K, K', \t^2$, and $w$ be as defined in Lemma \ref{d=4, g2(lku=2), d(u)>=8, no missing edge}. Since $d(u)=7$, $K$ is prime, and $V(K)=6$. Suppose $K=\p(abt)\star\p(pqc)$, where $abt$ is a 2-simplex. Without loss of generality, let $\t^2=pqt$. Then $\lk u$ has at most one missing edge. If any one of the triangles $pqc$ and $abt$ is a missing triangle in $\D$, then there exists a vertex, say $x$, within that triangle such that the pair $(u,x)$ satisfies the link condition.

Suppose both the triangles $abt$ and $pqc$ are faces of $\D$. Then the links of both triangles contain at least two vertices from $V(\D\setminus\st u)$. Therefore, the degree of each vertex in $\{p,q,t,a,b\}$ is at least 9, and $d(c)\geq 8$. If $g_2(\lk x)\geq 3$ for every vertex $x\in\{p,q,c,a,b,t\}$, then $\sum_{v\in\D}g_2(\lk v)\geq 23$. However, the relations $\sum_{v\in\D}g_2(\lk v)=3g_3(\D)+4g_2(\D)$ and $g_3(\D)=0$ imply that $\sum_{v\in\D}g_2(\lk v)=20$. Therefore, $\lk u$ contains a vertex, say $x$, with $d(x)\geq 8$ and $g_2(\lk x)\leq 2$. Hence, the result follows from Lemmas \ref{octahedral 4-sphere}, and \ref{g2=5, g2(lku=1),d(u)=9} - \ref{g2=5, g2(lku=2), three missing edge}. 
\end{proof}
\begin{Lemma}\label{g2=5, g2(lku=1), d(u)=6}
Let $\D\in\H_{5}$, and let $u$ be a vertex in $\D$ with $g_2(\lk u)=1$. If $d(u)=6$, then $\D$ can be obtained from a triangulated sphere with $g_2\leq 4$ by an edge expansion together with flipping an edge (possibly zero times) or a bistellar $2$-move (possibly zero times).
\end{Lemma}
\begin{proof}
Let $\lk u=\p(abt)\star\p(pqc)$, where $abt$ and $pqc$ are two 2-simplices. If either $abt$ or $pqc$ is a missing triangle in $\D$, then for every vertex $x$ in that triangle, the pair $(u,x)$ satisfies the link condition. Suppose both the triangles $abt$ and $pqc$ are present in $\D$. Then all the vertices of $\lk pqc$ and $\lk abt$ are in $V(\D\setminus\st u)$. Also, $d(x)\geq 9$ for every vertex $x\in\lk u$. If $g_2(\lk x)\geq 3$ for every vertex $x\in\lk u$, then $\sum_{v\in\D}g_2(\lk v)\geq \sum_{v\in\D\setminus\st u}g_2(\lk v)+3\cdot 6+1$. This implies that $\D\setminus\st u$ contains at most two vertices, which contradicts the fact that $V(\lk pqc)\subseteq V(\D\setminus\st u)$. Therefore, there is at least one vertex, say $x$, in $\lk u$ such that $d(x)\geq 9$ and $g_2(\lk x)= 2$. Thus, the result follows from Lemmas \ref {g2=5, g2(lku=2), d(u)>=8, no missing edge}, \ref{g2=5, g2(lku=2), two missing edge}, and \ref{g2=5, g2(lku=2), three missing edge}.
\end{proof}
\begin{Theorem}\label{g2=5. prime theorem}
Let $\D$ be a prime homology $4$-manifold with $g_2(\D)=5$. Then $\D$ is a triangulated sphere obtained from a triangulated $4$-sphere with $g_2\leq 4$ by one of the following operations:
\begin{enumerate}[$(i)$]
\item a bistellar $1$-move and an edge contraction,
\item an edge expansion together with flipping an edge (possibly zero times) or a bistellar $2$-move (possibly zero times).
\end{enumerate}
\end{Theorem}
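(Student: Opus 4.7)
The plan is to follow the same dichotomy as in Theorems \ref{prime theorem for g2=3} and \ref{g2=4. prime theorem}: first dispose of the case in which some vertex link fails to be prime, then reduce to the class $\H_5$ and run a case analysis on a vertex whose link is particularly well-behaved. If $\D$ contains a vertex $u$ with $\lk u$ not prime, Lemma \ref{non prime vertex links} immediately produces a homology $4$-manifold $\D'$ with $g_2(\D')=4$ from which $\D$ is recovered by a bistellar $1$-move followed by an edge contraction, giving conclusion $(i)$. So we may assume the link of every vertex is prime and, since $\D$ is prime itself, $\D\in\H_5$.

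In this case Lemma \ref{> 1 for prime u} guarantees $g_2(\lk v)\geq 1$ at every vertex, and Lemma \ref{g2<3 exists} (applied with $\a=5$) produces a vertex $u$ with $g_2(\lk u)\leq 2$. The next step is to branch on $g_2(\lk u)$ and to exploit the structural lemmas of Section~5. If $g_2(\lk u)=2$ and $\lk u$ is an octahedral $3$-sphere, Lemma \ref{octahedral 4-sphere} produces $\D$ as a central retriangulation (a particular edge expansion) of a homology $4$-manifold with $g_2\leq 3$, which lies within conclusion $(ii)$ with no flip or bistellar $2$-move required. Otherwise, thanks to the standing reduction $(*)$ (licensed by Lemmas \ref{missing pqc} and \ref{missing edge}), we may assume $\D$ satisfies $(*)$, so that the dimension of $\lk u$ together with the $f$-vector constraint from Lemma \ref{g2=5, g2(lku=1),d(u)=9} pins $d(u)$ to a short list: either $d(u)\in\{6,7,8\}$ when $g_2(\lk u)=1$, or $d(u)\in\{7,8,\dots\}$ when $g_2(\lk u)=2$.

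Each of these cases is then exactly one of the lemmas of Section~5. For $g_2(\lk u)=1$ with $d(u)=6$ use Lemma \ref{g2=5, g2(lku=1), d(u)=6}; with $d(u)=7$ use Lemma \ref{g2=5, g2(lku=1),d(u)=7}; and with $d(u)=8$ use Lemma \ref{g2=5, g2(lku=1),d(u)=8}. For $g_2(\lk u)=2$ with $d(u)=7$ use Lemma \ref{g2=5, g2(lku=2), d(u)=7}; for $d(u)\geq 8$ split further according to the number of missing edges of $\lk u$ that are present in $\D$, and invoke Lemma \ref{g2=5, g2(lku=2), d(u)>=8, no missing edge}, Lemma \ref{g2=5, g2(lku=2), two missing edge}, or Lemma \ref{g2=5, g2(lku=2), three missing edge} respectively (a fourth missing edge is impossible by Corollary \ref{same vertex set} combined with property $(*)$, as already used in those lemmas). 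In every sub-case the conclusion is precisely that $\D$ is built from a triangulated $4$-sphere with $g_2\leq 4$ by an edge expansion, possibly preceded or followed by an edge flip or a bistellar $2$-move, matching $(ii)$.

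The main obstacle is not any single step in the combination above — each case is handed to us by a Section~5 lemma — but rather ensuring that the case split is exhaustive and that the hypotheses of each invoked lemma are actually met. In particular, one must verify that passing to the assumption $(*)$ is always legitimate (this is where Lemmas \ref{missing pqc} and \ref{missing edge} must be applied \emph{before} the case analysis, and the output inequalities $g_2\leq m-(n-2)$ and $g_2\leq m-1$ must be chased through so the intermediate complex lies in the hypothesis range $g_2\leq 4$ of the target statement), and one must confirm that in the octahedral branch the hypothesis $g_2(\D)\leq 5$ of Lemma \ref{octahedral 4-sphere} is in force. Once this bookkeeping is done, combining the outputs of the Section~5 lemmas and Lemma \ref{non prime vertex links} gives exactly the two alternatives $(i)$ and $(ii)$ of the theorem.
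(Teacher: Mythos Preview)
Your proposal is correct and follows essentially the same approach as the paper: first dispose of non-prime vertex links via Lemma~\ref{non prime vertex links}, then for $\D\in\H_5$ locate a vertex with $g_2(\lk u)\leq 2$ via Lemma~\ref{g2<3 exists} and hand the resulting cases to Lemma~\ref{octahedral 4-sphere}, Lemmas~\ref{missing pqc}--\ref{missing edge}, and the lemmas of Section~5. The paper's own proof is terser (it simply cites ``all the discussed lemmas in this section''), while you have spelled out the case tree explicitly; one small correction is that the impossibility of a fourth missing edge when $g_2(\lk u)=2$ comes directly from the rigidity inequality $g_2(\D[V(\st u)])\leq g_2(\D)=5$, not from Corollary~\ref{same vertex set} and $(*)$.
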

\begin{proof}

If $\D\notin\H_5$, then the result follows from Lemmas \ref{missing pqc},  \ref{missing edge}, and \ref{non prime vertex links}. Now, assume that $\D\in \H_5$. It follows from Lemma \ref{g2<3 exists} that there exists a vertex, say $v$, in $\D$ such that $g_2(\lk v)\leq 2$.

 Let $u$ be a vertex in $\D$ with $g_2(\lk u)=2$. If $d(\lk u)\geq 8$, then the conclusion can be obtained from Lemmas \ref{g2=5, g2(lku=2), d(u)>=8, no missing edge}, \ref{g2=5, g2(lku=2), two missing edge}, and \ref{g2=5, g2(lku=2), three missing edge}, depending on whether $\lk u$ contains at most one, two, or three missing edges, respectively, present in $\D$. On the other hand, if $d(u)=7$, then the result can be obtained from Lemma \ref{g2=5, g2(lku=2), d(u)=7}.

Let $u$ be a vertex in $\D$ with $g_2(\lk u)=1$. According to Lemma \ref{g2=5, g2(lku=1),d(u)=9}, we can conclude that $d(u)\leq 8$. If $d(u)\geq 7$, we can derive the following conclusions from Lemmas \ref{g2=5, g2(lku=1),d(u)=8} and \ref{g2=5, g2(lku=1),d(u)=7}: $\D$ can be obtained from a triangulated sphere with $g_2\leq 4$ by performing edge expansions together with flipping an edge (possibly zero times) or bistellar $2$-move (possibly zero times). Similarly, if $d(u)=6$, then we can draw the same conclusions based on Lemma \ref{g2=5, g2(lku=1), d(u)=6}.
\end{proof}

\vspace{.15cm}


\noindent {\em Proof of Theorem} \ref{main}. If $\D$ is a prime homology 4-manifold with $g_2(\D)\leq 2$, then $\D$ can be considered as a triangulated sphere, as demonstrated in Propositions \ref{Nevod>3} and \ref{Zhengd>3}. Consequently, the desired result is obtained for the first part of the theorem.

Let $\D$ be a homology $4$-manifold, which may not be prime, with $3 \leq g_2(\D) \leq 5$. Since applying handle addition to a homology 4-manifold increases the $g_2$-value by 15 in the resulting complex, we can express $\D$ as a connected sum of a finite number of prime homology $4$-manifolds, denoted as $\D_1,\dots,\D_n$, each with $g_2$-values less than or equal to 5. For each $\D_i$, we apply Theorems  \ref{prime theorem for g2=3}, \ref{g2=4. prime theorem}, or \ref{g2=5. prime theorem} if $g_2(\D_i)$ is equal to 3, 4, or 5, respectively, and obtain $\D'_i$ with $g_2(\D'_i)<g_2(\D_i)$.

We can then iterate the same procedure for each $\D'_i$ if the value of $g_2$ is greater than 2. These inductive arguments establish the validity of the first part of the theorem. The second part of the theorem is supported by Remark \ref{sharp}.
\hfill $\Box$



\medskip

\noindent {\bf Acknowledgement:} 
The authors would like to thank the anonymous referees for many useful
comments and suggestions. The first author is supported by the Science and Engineering Research Board (CRG/2021/000859). The second author is supported by the Prime Minister's Research Fellows (PMRF/1401215) scheme.


\begin{thebibliography}{999}

   \bibitem{AshimowRoth} L.~Asimow, B.~Roth, The rigidity of graphs. II, {\em J. Math. Anal. Appl.}~\textbf{68 (1)} (1979) 171–190.
  
  \bibitem{BagchiDatta98}  B.~Bagchi and B.~Datta, A structure theorem for pseudomanifolds, {\em Discrete Math.} \textbf{188} (1998), no. 1-3, 41--60.
  
  \bibitem{BagchiDatta} B.~Bagchi and B.~Datta, Lower bound theorem for normal pseudomanifolds, {\em Expo. Math.} ~\textbf{26} (2008), 327--351.
  
     \bibitem{Barnette1} D.~Barnette, A proof of the lower bound conjecture for convex polytopes, {\em Pacific J. Math.} ~\textbf{46} (1973), 349--354.
  
  \bibitem{Barnette2} D.~Barnette, Graph theorems for manifolds, {\em Israel J. Math.} ~\textbf{16} (1973), 62--72.
  
 \bibitem{BGS1}  B. Basak, R. K. Gupta and S. Sarkar, A characterization of normal 3-pseudomanifolds with at most two singularities, {\em Discrete Math.} ~\textbf{346} (2023), no. 12, Paper No. 113588, 15 pp. 
  
\bibitem{BasakSwartz}  B. Basak and E. Swartz, Three-dimensional normal pseudomanifolds with relatively few edges, {\em  Adv. Math.} ~\textbf{365} (2020) 107035, 1--25.

 
  \bibitem{Billera1980} L. ~Billera and ~C. Lee, Sufficiency of McMullen's conditions for $f$-vectors of simplicial polytopes, {\em Bull. Amer. Math. Soc.} ~\textbf{2} (1980) 181–185.
  
  
\bibitem{Billera1981} L. ~Billera and C. ~Lee, A proof of the sufficiency of McMullen's conditions for f-vectors of simplicial convex polytopes, {\em J. Combin. Theory Ser. A} \textbf{31} (1981) 237–255.
  
 
  \bibitem{BrehmKuhnel} U.~Brehm and W.~K\"{u}hnel, Combinatorial manifolds with few vertices, {\em Topology} ~\textbf{26} (1987), no. 4, 465--473.

 \bibitem{Fogelsanger}
A.~Fogelsanger, The generic rigidity of minimal cycles, 
Ph.D.~thesis, Cornell University, 1988.



\bibitem{Kalai} G.~Kalai, Rigidity and the lower bound theorem I, 
{\em Invent.~Math.}~\textbf{88} (1987), 125--151. 

\bibitem{Klee} V.~Klee, A combinatorial analogue of Poincaré's duality theorem, 
{\em Canadian J. Math.}~\textbf{16} (1964), 517--531.


\bibitem{McMullen} P.~McMullen, The maximum numbers of faces of a convex polytope, {\em Mathematika}~\textbf{17} (1970), 179–184.

\bibitem{McMullen1971} P.~McMullen, The numbers of faces of simplicial polytopes, {\em Israel J. Math.} \textbf{9} (1971), 559–570.


\bibitem{NevoNovinsky}
E.~Nevo and E.~Novinsky, A characterization of simplicial polytopes with $g_2=1$, {\em J. Combin. Theory Ser. A}~\textbf{118} (2011), 387--395.

\bibitem{NovikSwartz2009}
I.~Novik and E.~Swartz, Applications of Klee's Dehn-Sommerville relations, {\em Discrete Comput. Geom.} ~\textbf{42} (2009), no. 2, 261--276.


\bibitem{NovikSwartz2020}
I.~Novik and E.~Swartz, g-vectors of manifolds with boundary, {\em Algebraic Combinatorics}~\textbf{3(4)} (2020), 887-911.

\bibitem{Stanley}
R. P. ~Stanley, The number of faces of a simplicial convex polytope. {\em Adv. Math.} \textbf{35} (1980), no. 3, 236–238.


\bibitem{Swartz2008} E.~Swartz, Topological finiteness for edge-vertex enumeration, {\em Adv. Math.} \textbf{219} (2008), 1722--1728.

\bibitem{Swartz2004} E.~Swartz, Lower bounds for $h$-vectors of $k$-CM, independence and broken circuit complexes, {\em SIAM J. Discrete Math.} \textbf{18} (2004), 647--661.

\bibitem{Swartz2009}
E.~Swartz, Face enumeration:  From spheres to manifolds, {\em J. Eur. Math. Soc.}, \textbf{11} (2009), 449--485.

\bibitem{Swartz2014}
E.~Swartz, Thirty-five years and counting,~2014, 29 pages,  arXiv:1411.0987.



\bibitem{Tay}
T. S. Tay, Lower-bound theorems for pseudomanifolds, {\em Discrete Comput. Geom.}, \textbf{13} (1995) 203–216.


\bibitem{TayWhiteWhiteley}
T.~Tay, N.~White and W.~Whiteley, Skeletal rigidity of simplicia complexes II, {\em European J. Combin.}~\textbf{16} (1995), 503--525.

\bibitem{Walkup}
D.~Walkup,
The lower bound conjecture for $3$- and $4$-manifolds,
{\em Acta~Math.}~\textbf{125} (1970), 75--107.
 
\bibitem{Whiteley} 
W.~Whiteley, Cones, infinity and 1-story buildings, {\em Structural Topology}~ \textbf{8} (1983),  53--70.


 \bibitem{Zheng}
 H.~Zheng, A characterization of homology manifolds with $g_2\le 2$, {\em J. Combin. Theory Ser. A}~\textbf{153} (2018), 31--45.
\end{thebibliography}
\end{document}